\documentclass[10pt,reqno]{amsart}
\usepackage{latexsym}
\usepackage{amssymb}
\usepackage{amsmath}
\usepackage{mathtools}
\usepackage[english]{babel}
\usepackage[dvips]{graphicx}
\usepackage{texdraw}
\usepackage{graphics}
\usepackage{pgf,tikz}
\usepackage{enumerate}
\usepackage{xcolor}
\setlength{\parindent}{0.1 in}


\font\tenmath=msbm10 \font\sevenmath=msbm7 \font\fivemath=msbm5
\newfam\mathfam \textfont\mathfam=\tenmath
\scriptfont\mathfam=\sevenmath \scriptscriptfont\mathfam=\fivemath

\def \\ { \cr }

\newcommand{\vvert}{|\!|\!|}
\newcommand{\RR}{{\mathbb R}}
\newcommand{\CC}{{\mathbb C}}
\newcommand{\NN}{{\mathbb N}}
\newcommand{\ZZ}{\mathbb Z}
\newcommand{\QQ}{{\mathbb Q}}
\newcommand{\EE}{{\mathbb E}}

\providecommand{\abs}[1]{\left\lvert {#1} \right\rvert}

\def\P{{\mathcal P}}

\def\T{{\mathcal T}}

\newcommand\la{\langle}
\newcommand\ra{\rangle}

\newcommand{\set}[1]{\left\{{#1}\right\}}
\newcommand{\implica}{\Longrightarrow}

\newcommand{\talque}{;}
\newcommand{\y}{ \; \textrm{and} \; }
\newcommand{\ds}{\displaystyle}

\newcommand{\torref}[1]{\tau_{#1}}

\newcommand{\sufijocb}[3]{S_{{#1}}({#3},{#2})}       
\newcommand{\sufijocbb}[4]{{S}_{{#1},{#2}}({#3},{#4})} 

\newcommand{\matrizp}[4]{P_{{#1},{#2}}({#3},{#4})}               

\newcommand{\btheta}{{\bf\theta}}
\newcommand{\source}{{\mathsf{s}}}
\newcommand{\range}{{\mathsf{r}}}
%
%
%
%

\numberwithin{equation}{section}
\newtheorem{theo}{Theorem}
\newtheorem{prop}[theo]{Proposition}
\newtheorem{coro}[theo]{Corollary}
\newtheorem{lemma}[theo]{Lemma}

\theoremstyle{remark}
\newtheorem{rmrk}{Remark}

\begin{document}

\title
{Eigenvalues of minimal Cantor systems}

\author{Fabien Durand}
\address{Laboratoire Ami\'enois
de Math\'ematiques Fondamentales et Appliqu\'ees, CNRS-UMR 7352, Universit\'{e} de Picardie Jules Verne, 33 rue Saint Leu, 80000 Amiens,
France.} \email{fabien.durand@u-picardie.fr}

\author{Alexander Frank}
\address{Departamento de Ingenier\'{\i}a
Matem\'atica and Centro de Modelamiento Ma\-te\-m\'a\-ti\-co, CNRS-UMI 2807, Universidad de Chile, Beauchef 851, Santiago, Chile.}
\email{afrank@dim.uchile.cl}

\author{Alejandro Maass}
\address{Departamento de Ingenier\'{\i}a
Matem\'atica and Centro de Modelamiento Ma\-te\-m\'a\-ti\-co, CNRS-UMI 2807, Universidad de Chile, Beauchef 851, Santiago,
Chile.}\email{amaass@dim.uchile.cl}

\subjclass[2010]{Primary: 54H20; Secondary: 37B20} \keywords{Minimal Cantor systems, Bratteli-Vershik representations, eigenvalues}

\thanks{The first author was partially supported by the ANR programs FAN and DynA3S, and MathAmSud DYSTIL. The second and third authors were partially supported by grants Basal-CMM PFB-03 and Mecesup UCH 0607. We acknowledge invitations from U. Picardie Jules Verne and CNRS where part of this work was developed.}

\begin{abstract}
In this article we give necessary and sufficient conditions that a complex number must satisfy to be a continuous eigenvalue of a minimal Cantor system. Similarly, for minimal Cantor systems of finite rank, we provide necessary and sufficient conditions for having a measure theoretical eigenvalue.
These conditions are established from the combinatorial information of the Bratteli-Vershik representations of such systems. As an application, from any minimal Cantor system, we construct a strong orbit equivalent system without irrational eigenvalues which shares all measure theoretical eigenvalues with the original system. In a second application a minimal Cantor system is constructed  satisfying the so-called 
maximal continuous eigenvalue group property. 
\end{abstract}

\date{July 4, 2017}

\maketitle

\markboth{Fabien Durand, Alexander Frank, Alejandro Maass}{Eigenvalues of finite rank minimal Cantor systems}

\section{Introduction}

The spectral theory of dynamical systems and, in particular, the study of eigenvalues of topological dynamical systems, either from a measure theoretical or a topological perspective, is a fundamental topic in ergodic theory, which allows one to understand mixing properties and the characterization of the Kronecker and maximal equicontinuous factors. Particularly interesting and rich has been the study of eigenvalues and weakly mixing properties of classical systems like interval exchange transformations \cite{nogueirarudolph, avilaforni, ferenczizamboniI, ferenczizamboniII} or other systems arising from translations on surfaces \cite{aviladelecroix}. From the symbolic dynamics point of view most of these systems have representations as minimal Cantor systems of finite topological rank, {\it i.e.}, there is a symbolic extension that can be represented by a Bratteli-Vershik system such that the number of Kakutani-Rohlin towers per level is globally bounded. To characterize eigenvalues of the original systems it is enough to consider this class of Cantor systems.   
Of course, a general approach like this assumes that the particular nature and information carried by the original dynamics can be effectively translated into concrete properties of a ``good'' Kakutani-Rohlin representation, which is not evident. Nevertheless, good representations for interval exchange transformations and, in particular, irrational rotations of the torus have already been proposed (see \cite{gjerdejohansen,dartnelldurandmaass}). 

With these examples in mind, our main motivation is to provide general necessary and sufficient conditions for a complex number to be the eigenvalue, either continuous or measure theoretical, of a minimal Cantor system of finite topological rank and when possible to get the same kind of results for any minimal Cantor system. In addition, we also want these conditions to be useful for studying the weakly mixing property,  {\it i.e.}, the absence of eigenvalues, or any other question relating eigenvalues with the dynamics of minimal Cantor systems.  

Some problems addressed in this article for different subclasses of minimal Cantor systems of finite topological rank has been considered since the pioneering work of Dekking \cite{dekking} and Host \cite{host}. There, it was stated that measurable eigenvalues of primitive substitution dynamical systems are always associated to continuous eigenfunctions, thus the maximal equicontinuous and measure theoretical Kronecker factors coincide. Implicitly, both works give conditions to be a measurable eigenvalue; however, the complete characterization of eigenvalues for substitution dynamical systems was given in \cite{ferenczimauduitnogueira}. Later, necessary and sufficient conditions to characterize continuous and measurable eigenvalues of linearly recurrent minimal Cantor systems were provided in \cite{lr} and \cite{necesariasuficiente}. These conditions are very effective and rely on the combinatorial data carried by the Bratteli-Vershik representations. Even if linearly recurrent systems are natural from the symbolic dynamics point of view (see \cite{du1,du2}), this class could be considered ``small'', meaning that in many classical cases, like interval exchange transformations, only a few maps have a symbolic representation of this kind. In fact, most of them are of finite topological rank and not linearly recurrent. There are few general results concerning eigenvalues of minimal Cantor systems of finite topological rank. Some preliminary results are given in \cite{rangofinito} and a detailed study of eigenvalues of Toeplitz systems of finite topological rank is given in \cite{dfm}. This last work motivates the ideas of the current work.  

In this article we provide necessary and sufficient conditions that a complex number should satisfy to be a measurable eigenvalue of a minimal Cantor system of finite topological rank (Theorem \ref{theo:cns_sinrho} and Theorem \ref{theo:cnsvpmedible}). In addition, we give a necessary and sufficient condition  
for a complex number to be a continuous eigenvalue of a minimal Cantor system, that is, we succeeded in dropping the finite rank hypothesis (Theorem \ref{theo:nec-suff-cont}). 
In its conception, the conditions are very similar to those proposed for linearly recurrent systems. They are given in the form of the convergence of some series or special sequences and only depend on the combinatorial data provided by the  Bratteli-Vershik representations. The main difference here is that we need to include in an algebraic way the information of the local orders carried by these representations. Thus, the drawback of these conditions is that they depend on a non trivial computation. 

To illustrate the use of the conditions provided in this article we consider different examples and applications. 

First we prove that our conditions extend the results in \cite{dfm} to characterize eigenvalues of finite rank Toeplitz minimal systems. This class, even if simple, allows to see the amount of information needed to compute eigenvalues using the proposed conditions. Then, a first application relates the notions of continuous eigenvalues and strong orbit equivalence. We use our necessary and sufficient condition in the continuous case to prove that, by doing controlled modifications of the local orders of a Bratteli-Vershik system, one can alter the group of continuous eigenvalues. In particular, starting from a minimal Cantor system without roots of unity as continuous eigenvalues we produce a strong orbit equivalent system that is topologically weakly mixing and which shares the Kronecker factor with the original system for any ergodic measure. In 
\cite{GHH} a similar result is obtained but without the control on the non continuous eigenvalues and in 
\cite{sadunpriebe} a similar example is developed in the context of tiling systems. In a second example, the conditions to be measurable eigenvalues and previous application are used to construct a topologically weakly mixing minimal Cantor system of rank two admitting all rational numbers as measure theoretical eigenvalues, showing that topological rank is not an obstruction to have non continuous rational eigenvalues as in the Toeplitz case. 
Finally, inspired by questions in \cite{Cortez&Durand&Petite:2016} and \cite{GHH}, we use our main theorems 
to produce an expansive minimal Cantor system whose group of continuous eigenvalues coincides with the intersection of the images of the so-called group of traces. 

The article is organized as follows. In Section \ref{sec:definitions} we provide the main definitions concerning eigenvalues of dynamical systems and Bratteli-Vershik representations. Section \ref{sec:continuous} is devoted to the main result in the continuous case (Theorem \ref{theo:nec-suff-cont}). In this section we do not use the finite rank hypothesis. Section \ref{sec:measurable} is focused on the main results in the measurable case (Theorem \ref{theo:cns_sinrho} and Theorem \ref{theo:cnsvpmedible}). These results only concern minimal Cantor systems of finite topological rank. Finally, in Section \ref{examples} we develop examples and applications illustrating our main results. 

\section{Definitions and notation}
\label{sec:definitions}

\subsection{Dynamical systems and eigenvalues}
A \emph{topological dynamical system}, or just \emph{dynamical system}, is a compact Hausdorff space $X$ together with a homeomorphism 
$T:X\rightarrow X$. We use the notation $\left( X,T\right)$. If $X$ is a Cantor set 
({\it i.e.}, $X$ has a countable basis of closed and open sets and it has no isolated points) we say that the system is Cantor. A dynamical system is \emph{minimal} if all orbits are dense in $X$,
or equivalently if the only non empty closed invariant set is $X$.

A complex number $\lambda$ is a {\it continuous eigenvalue} of $(X,T)$ if there exists a continuous function $f : X\to \CC$, $f\not = 0$, such
that $f\circ T = \lambda f$; $f$ is called a {\it continuous eigenfunction} (associated to $\lambda$). The system $(X,T)$ is \emph{topologically weakly mixing} if it has no non constant continuous eigenfunctions.
Let $\mu$ be a $T$-invariant probability measure defined on the Borel $\sigma$-algebra of $X$, {\it i.e.}, $T\mu = \mu$. A complex number $\lambda$ is an {\it eigenvalue} of the
dynamical system $(X,T)$ with respect to $\mu$ if there exists $f\in L^2 (X,\mu)$, $f\not = 0$, such that $f\circ T = \lambda f$; $f$ is
called an {\it eigenfunction} (associated to $\lambda$). If $\mu$ is ergodic, then every eigenvalue for $\mu$ has modulus 1 and every eigenfunction has a constant modulus $\mu$-almost surely. Of course, continuous eigenvalues are eigenvalues for $\mu$. The system is \emph{weakly mixing} for $\mu$ if it has no non constant eigenfunctions.

If $\lambda=\exp(2i\pi\alpha)$ is either a continuous or measurable eigenvalue with $\alpha$ an irrational number we say that $\lambda$ is an irrational eigenvalue; in the case $\alpha$ is rational we say  that $\lambda$ is a rational eigenvalue.

\subsection{Bratteli-Vershik representations}
Let $(X,T)$ be a minimal Cantor system. It can be represented by an ordered Bratteli diagram together with the Vershik transformation acting on it. 
This couple is called a Bratteli-Vershik representation of the system. We give a brief outline of this construction emphasizing the notation in this paper. For details on this theory see \cite{hps} or \cite{review}.

\subsubsection{Bratteli diagrams}
A Bratteli diagram is an infinite graph $\left( V,E\right)$ which consists of a vertex set $V$ 
and an edge set $E$, both of
which are divided into levels $V=V_{0}\cup V_{1}\cup \ldots$ and $E=E_{1}\cup E_{2}\cup \ldots$, 
where all levels are pairwise disjoint. 
The set $V_{0}$ is a singleton $\{v_{0}\}$ and for all $n\geq 1$ edges in $E_{n}$ join vertices in $V_{n-1}$ to vertices in $V_{n}$. If $e\in E$ connects $u\in V_{n-1}$ with $v \in V_n$ we write $\source(e)=u$ and $\range(e)=v$, where $\source:E_n\to V_{n-1}$ and $\range:E_n\to V_{n}$ are the source and range maps, respectively. 
It is also required that $\source^{-1}(v)\not = \emptyset$ for all $v \in V$ and that 
$\range^{-1}(v)\not = \emptyset$ for all $v \in V\setminus V_0$.
For all $n\geq 1$
we set $\#V_{n}=d_{n}$ and we write $V_n=\{1,\ldots,d_n\}$ to simplify notation.   

Fix $n\geq 1$. We call \emph{level} $n$ of the diagram the subgraph consisting of the vertices in $V_{n-1}\cup V_{n}$ 
and the edges $E_{n}$ between these vertices. Level $1$ is called the \emph{hat} of the Bratteli diagram. 
We describe the edge set $E_n$ using a $V_{n-1}\times V_{n}$ incidence matrix $M_{n}$ for which its $(u,v)$ entry is the number of edges in $E_{n}$ joining vertex $u \in V_{n-1}$ with vertex $v \in V_{n}$. We also set $P_{n}=M_{2}\cdots M_{n}$, with the convention that $P_{1}=I$, where $I$ denotes the identity matrix. The number of paths joining $v_{0} \in V_{0}$ and a vertex $v\in V_{n}$ is given by coordinate $v$ of the \emph{height row vector} $h_{n}=\left(h_n(u) ; u \in V_{n} \right) \in \NN ^{V_{n}}$. Notice that $h_{1}=M_{1}$ and $h_{n}=h_{1}P_{n}$. 

We also consider several levels at the same time. 
For integers $0 \leq m< n$ we denote by $E_{m,n}$ the set of all paths in the graph joining vertices of  
$V_{m}$ with vertices of $V_{n}$. We define matrices $P_{m,n}=M_{m+1} \cdots M_{n}$, with the convention that $P_{n,n}=I$
for $1\leq m \leq n$. Clearly, entry $\matrizp{m}{n}{u}{v}$ of matrix $P_{m,n}$ is the number of paths in $E_{m,n}$ from vertex $u \in V_{m}$ to vertex $v \in V_{n}$. 
It can be easily checked that $h_{n}=h_{m}P_{m,n}$. 

A Bratteli diagram $(V,E)$ is called \emph{simple} if for any $m\geq 1$ there exists $n > m$ such that each pair of vertices $u\in V_m$ and $v \in V_n$ is connected by a finite path, \emph{i.e.}, $P_{m,n}>0$.

The incidence matrices defined above correspond to the transpose of the matrices defined at the classical reference in this theory \cite{hps}. This choice, which in our opinion is more mnemotechnical, is done to simplify the reading of the article.

\subsubsection{Ordered Bratteli diagrams and Bratteli-Vershik representations}
An \emph{ordered} Bratteli diagram is a triple \( B=\left( V,E,\preceq \right) \), where \( \left( V,E\right)  \) is a Bratteli diagram and \( \preceq  \) is a partial ordering on \( E \) such that: edges
\( e \) and \( e' \) in $E$ are comparable if and only if $\range(e)=\range(e')$.
This partial ordering naturally defines maximal and minimal edges. Also, 
the partial ordering of $E$ induces another one on paths of $E_{m,n}$ for all $0 \leq m < n$:  
$\left(e_{m+1},\ldots,e_{n}\right) \preceq \left(f_{m+1},\ldots ,f_{n}\right)$ if and only if 
there is $m+1\leq i\leq n$ such that $e_{i}\preceq f_{i}$ and $e_{j}=f_{j}$ for $i<j\leq n$.

Given a strictly increasing sequence of integers 
$\left(n_{k}\right)_{k\geq 0}$ with $n_{0}=0$ one defines the \emph{contraction} or \emph{telescoping} of
$B=\left(V,E,\preceq \right)$ with respect to $\left(n_{k} \right)_{k\geq 0}$ by 
$$\left(\left(V_{n_{k}}\right)_{k\geq
0},\left( E_{n_{k},n_{k+1}}\right)_{k\geq 0},\preceq \right), $$ where $\preceq$ is the order induced in each set of edges 
$E_{n_{k},n_{k+1}}$. The converse operation is called {\it microscoping} (see \cite{hps} and \cite{gps} for more details).
\smallskip

Given an ordered Bratteli diagram \( B=\left( V,E,\preceq \right) \) one defines \( X_{B} \) as the set of infinite paths \( \left(
x_{1},x_{2},\ldots \right)  \) starting in \( v_{0} \) such that $\range(x_n)=\source(x_{n+1})$
for all \( n\geq 1 \). We topologize \( X_{B} \) by postulating a basis of open sets, namely the family of \emph{cylinder sets}
$$
\left[ e_{1},e_{2},\ldots ,e_{n}\right]
=\left\{ \left( x_{1},x_{2},\ldots \right) \in X_{B} \textrm{ } ; \textrm{ }
x_{i}=e_{i},\textrm{ for }1\leq i\leq n\textrm{ }
\right\} .$$

Each \( \left[ e_{1},e_{2},\ldots ,e_{n}\right]  \) is also closed, as is
easily seen, and so \( X_{B} \) is a compact, totally disconnected metrizable space. 
If $(V,E)$ is simple then \( X_{B} \) is Cantor.

When there is a unique point  \( \left( x_{1},x_{2},\ldots \right) \in X_{B} \) such that \( x_{n} \) is (locally) maximal for any \( n\geq 1 \) and a unique
point \( \left( y_{1},y_{2},\ldots \right) \in X_{B} \) such that \( y_{n} \) is (locally) minimal for any \(n \geq 1 \), one says that \( B=\left(
V,E,\preceq \right)  \) is a \emph{properly ordered} Bratteli diagram. We call these particular points \( x_{\mathrm{max}} \) and \(
x_{\mathrm{min}} \) respectively. In this case, we define the map \( V_{B} \) on \( X_{B} \) called the \emph{Vershik map} as follows. Let \( x=\left( x_{1},x_{2},\ldots \right) \in X_{B}\setminus \left\{ x_{\mathrm{max}}\right\}  \) and let \(
n\geq 1 \) be the smallest integer so that \( x_{n} \) is not a maximal edge. Let \( y_{n} \) be the successor of \( x_{n} \) for the corresponding local order and \( \left(
y_{1},\ldots ,y_{n-1}\right)  \) be the unique minimal path in \( E_{0,n-1} \) connecting \( v_{0} \) with the initial vertex of \( y_{n} \). We
set \( V_{B}\left( x\right) =\left( y_{1},\ldots ,y_{n-1},y_{n},x_{n+1},\ldots \right)  \) and \( V_{B}\left( x_{\mathrm{max}}\right)
=x_{\mathrm{min}} \).

The system \( \left( X_{B},V_{B}\right)  \) is called the \emph{Bratteli-Vershik system} generated by \( B=\left( V,E,\preceq \right)
\). The dynamical system induced by any telescoping of \( B \) is topologically conjugate to \( \left( X_{B},V_{B}\right)  \). 

In \cite{hps} it is proved that the system $\left( X_{B},V_{B}\right)$ is minimal whenever
the associated Bratteli diagram $(V,E)$ is simple. Conversely, it is also proved that any minimal Cantor system $\left( X,T\right)$ is topologically conjugate to a Bratteli-Vershik system \( \left(X_{B},V_{B}\right)  \) where $(V,E)$ is simple. 
We say that $B=(V,E,\preceq)$ is a \emph{Bratteli-Vershik representation} of the minimal Cantor system $\left( X,T\right)$ 
if $B$ is properly ordered, $(V,E)$ is simple and $(X,T)$ and $\left( X_{B},V_{B}\right)$ are topologically conjugate. In what follows, each time we consider a representation $B=(V,E,\preceq)$ of $(X,T)$ we will say that $(X,T)$ is given by the Bratteli-Vershik representation $B$ and we will identify $(X,T)$ with $\left( X_{B},V_{B}\right)$.

To have a better understanding of the dynamics of a minimal Cantor system, and in particular 
to understand its group of eigenvalues, one needs to work with a ``good'' Bratteli-Vershik representation. 
So we consider representations such that:

\smallskip

(H1) The entries of $h_{1}$ are all equal to $1$.

(H2) For every $n\geq 2$, $M_{n}>0$.
 
(H3) For every $n\geq 2$, all maximal edges of $E_n$ start in the same vertex of $V_{n-1}$. We assume this vertex is $d_{n-1}$.
\medskip

Classical arguments show that this reduction is possible, in particular (H2) follows from the simplicity of the Bratteli-Vershik representation and (H3) can be deduced from Proposition 2.8 in \cite{hps}. A Bratteli-Vershik representation of a  minimal Cantor system $(X,T)$ satisfying (H1), (H2) and (H3) will be called \emph{proper}. 

\subsubsection{Minimal Cantor systems of finite topological rank}

A minimal Cantor system is of finite (topological)  rank if it admits a Bratteli-Vershik representation such that the number of vertices per level is uniformly bounded by some integer $d$. The minimum possible value of $d$ is called the \emph{topological rank} of the system. We observe that topological and measure theoretical finite rank  notions are different notions. For instance, systems of topological rank one correspond to odometers, whereas in the measure theoretical sense there are rank one systems that are expansive as classical Chacon's example. 

If the minimal Cantor system has finite rank $d$, in the definition of proper representation we will also assume:
\medskip

(H4) For every $n\geq 1$, $d_{n}$ is equal to $d$.
\medskip

This condition can be assumed without loss of generality in the finite rank case. 
Also, to simplify notation and avoid the excessive use of indices, in this last case we will identify $V_{n}$ with $\{1,\ldots, d\}$ for all $n\geq 1$. The level $n$ will be clear from the context. It is not difficult to prove that a  minimal Cantor system of topological finite rank $d$ has a proper representation (see \cite{dfm} for an outline of the proof). 

A minimal Cantor system is \emph{linearly recurrent} if it admits a proper Bratteli-Vershik representation such that the set $\{M_{n};n\geq 2\}$ is
finite. Clearly, linearly recurrent minimal Cantor systems are of finite rank (see \cite{dhs}, \cite{du1}, \cite{du2} and \cite{lr} for more details and properties of this class of systems).

\subsubsection{Kakutani-Rohlin partitions}
Let $B=\left( V,E,\preceq \right)$ be a representation of the minimal Cantor system $(X,T)$. This diagram defines for each $n\geq 0$ a clopen {\it Kakutani-Rohlin} partition of $X$: for $n=0$, 
$\P_{0}=\{B_{0}(v_{0})\}$, where $B_{0}(v_{0})=X$, and for $n\geq 1$    
$$
\P_{n}=\{T^{-j}B_{n}(v); v \in V_n, \ 0 \leq j < h_n(v) \}  ,
$$
where $ B_n(v) = [e_1 , \dots ,e_n]$ and $(e_1 , \dots ,e_n)$ is the unique maximal path from $v_0$ to vertex $v \in V_{n}$. For each $v \in V_n$ the set $\{T^{-j}B_n(v); 0 \leq j < h_n(v) \}$ is called the {\it tower} $v$ of $\P_{n}$. 
It corresponds to the set of all paths from $v_0$ to $v\in V_n$ (there are exactly $h_n(v)$ of such paths). 
Denote by $\T_{n}$ the $\sigma$-algebra generated by the partition $\P_{n}$.
The map $\tau_n: X \to V_n$ is
given by $\tau_n(x)=v$ if $x$ belongs to tower $v$ of $\P_{n}$. The entrance time of 
$x$ to $B_n({\tau_n(x)})$ is given by $r_n(x)=\min\{ j\geq 0; T^jx \in B_n({\tau_n(x)}) \}$.

For each $x=(x_1,x_{2},\ldots) \in X$ and $0 \leq m<n$ define the row vector 
$s_{m,n}(x)\in \NN^{V_{m}}$, called the {\it suffix vector of $x$ between levels $m$ and $n$}, by
$$
s_{m,n}(x,u)=\# \{e \in E_{m,n}; (x_{m+1},\ldots,x_n) \prec e, \source(e)=u \}
$$
at each coordinate $u\in V_m$, where $\prec$ stands for $\preceq$ and $\neq$ simultaneously, and $s_{m,n}(x,u)$ stands for the $u$-th entry of the row vector $s_{m,n}(x)$.
If $y$ is another point in $X$ with $\tau_m(y)=\tau_m(x)$ and $\tau_n(y)=\tau_n(x)$, then it is clear that $s_{m,n}(x)=s_{m,n}(y)$ if and only if $(x_{m+1},\ldots,x_n)=(y_{m+1},\ldots,y_n)$. This fact motivates the following definition. For each $0 \leq m<n$, $u\in V_m$ and $v\in V_n$, define the set
$$
S_{m,n}(u,v)=\set{s_{m,n}(x)\talque x\in X, \ \tau_m(x)=u \y \tau_n(x)=v}.
$$ 
A direct verification shows that the cardinality of $S_{m,n}(u,v)$ is equal to $\matrizp{m}{n}{u}{v}$, {\it i.e.}, the number of paths in $E_{m,n}$ joining $u$ and $v$.
If necessary, to simplify notation we put $s_n(x)=s_{n,n+1}(x)$ and $S_n(u,v)=S_{n,n+1}(u,v)$.
 
A classical computation gives for all $n\geq 1$ (see for example \cite{necesariasuficiente}):
\begin{align}
\label{eq:formulareturn} r_n(x)= s_0(x)+\sum_{i=1}^{n-1} \la s_i(x),h_{1}P_{i} \ra=s_0(x)+\sum_{i=1}^{n-1} \la s_i(x),h_{i} \ra \ ,
\end{align}
where $\la \cdot, \cdot \ra$ is the euclidean inner product. Observe that under  hypothesis (H1), {\it i.e.}, $h_{1}=(1,\ldots,1)$, we have $s_0(x)=0$. Similarly, one can obtain the following general relation between entrance times and suffix vectors of $x \in X$:
\begin{equation}\label{eq:formulareturn_mn}
r_n(x) = r_m(x) + \la s_{m,n}(x),h_m\ra ,
\end{equation}
for $1\leq m<n$. From this equality it follows that for $0\leq \ell < m < n$
\begin{equation}\label{eq:suma_sufijos}
\la s_{\ell,n}(x),h_{\ell}\ra = \la s_{\ell,m}(x),h_{\ell}\ra + \la s_{m,n}(x),h_{m}\ra,
\end{equation}
and particularly
\begin{equation}\label{eq:formulareturn_mn_suma}
\la s_{m,n}(x),h_{m}\ra = \sum_{i=m}^{n-1} \la s_{i}(x),h_{i}\ra.
\end{equation}
Equation \eqref{eq:suma_sufijos} can also be obtained by noticing that for $n\geq 0$ and $x\in X$
\begin{equation}\label{formulasuffix}
s_{n}(x) + s_{n+1}(x)M_{n+1}^{T} = s_{n,n+2}(x),
\end{equation}
and then, for $0\leq \ell < m < n$ we have
\begin{equation}\label{eq:formulasuffix_lmn}
s_{\ell,n}(x) = s_{\ell, m}(x) + s_{m,n}(x)P_{\ell, m}^{T}.
\end{equation}

\subsubsection{Invariant measures}
\label{subsubsec:invmeas}
Let $B=(V,E,\preceq)$ be a Bratteli-Vershik representation of the minimal Cantor system $(X,T)$. Let $\mu$ be an invariant probability measure for this system. 
The measure $\mu$ is determined by the values it gives to $B_n(v)$ for all $n\geq 0$ and $v \in V_n$. Define the column vector
$\mu_{n}=(\mu_n(v) ;  v \in V_{n})$ with $\mu_n(v)=\mu(B_n(v))$. 
A simple computation allows to prove the following useful relation:
\begin{equation}\label{eq:measure}
\mu_{m}=P_{m,n}\mu_{n}
\end{equation}
for all $0\leq m < n$. Also, $\mu(\tau_{n}=v)= \mu\{x\in X;\tau_n(x)=v\}=h_{n}(v) \mu_{n}(v)$ for all $n\geq 1$ and $v\in V_{n}$.

\subsubsection{Clean Bratteli-Vershik representations}\label{subsec:clean}
Let $B=(V,E,\preceq)$ be a proper representation of finite rank $d$ of the minimal Cantor system $(X,T)$.
Recall that in this case we identify $V_{n}$ with $\{1,\ldots, d\}$ for all $n\geq 1$. 
Then, by Theorem 3.3 in \cite{bkms}, there exist a telescoping of the diagram (which keeps the diagram proper) and $\delta_0 >0$ such that:  
\begin{enumerate}
\item For any ergodic measure $\mu$ there exists $I_{\mu} \subseteq \{1,\ldots,d\}$ satisfying: 
\begin{enumerate}
\item $\mu(\tau_{n}=v) \geq \delta_0$ for every $v \in I_{\mu}$ and $n \geq 1$, and   
\item $\lim_{n\to +\infty} \mu(\tau_{n}=v)=0$ for every $v \not \in I_{\mu}$.
\end{enumerate}
\item If $\mu$ and $\nu$ are different ergodic measures then $I_{\mu}\cap I_{\nu}=\emptyset$.  
\end{enumerate}
When a proper Bratteli-Vershik representation of finite rank $d$ satisfies the previous properties we say it is \emph{clean}. We remark that this is a modified version of the notion of \emph{clean} Bratteli diagram given in \cite{rangofinito} that is inspired by the results of \cite{bkms}. This property will be  
relevant for formulating our main result in the measurable case. 

In \cite{bkms}, systems such that $I_{\mu}=\{1,\ldots,d\}$ for some ergodic measure 
$\mu$ are called of \emph{exact finite rank}. Clearly, those systems are uniquely ergodic.

\section{Continuous eigenvalues of minimal Cantor systems}
\label{sec:continuous}

In this section we show a necessary and sufficient condition for a complex number to be a continuous eigenvalue of a minimal Cantor system. The condition is given in terms of the combinatorial objects associated to a proper Bratteli-Vershik representation of the system. The proof follows the lines and some ideas developed to prove a general necessary condition in \cite{rangofinito}. 
 
\subsection{The necessary and sufficient condition}
Let $(X,T)$ be a minimal Cantor system given
by a proper Bratteli-Vershik representation $B=\left( V,E,\preceq \right)$. The most general necessary and sufficient condition for a complex number $\lambda=\exp(2i\pi\alpha)$ to be a continuous eigenvalue of $(X,T)$ states that the map
$\lambda^{r_n(\cdot)}$ converges uniformly (see Proposition 12 in \cite{necesariasuficiente}). In order to achieve the uniform convergence, several simpler necessary conditions relying on the combinatorics of the Bratteli-Vershik representation $B$ have been proposed. 
We recall the necessary condition proved in {\cite{rangofinito}} that serves as motivation to the main result of this section. Denote by $ \vvert\cdot  \vvert$ the distance to the nearest integer vector.

\begin{theo}
\label{th:necessary-continuous} 
Let $(X,T)$ be a minimal Cantor system given
by a proper Bratteli-Vershik representation $B=\left( V,E,\preceq \right)$. 
If $\lambda=\exp(2i\pi \alpha)$ is a continuous
eigenvalue of $(X,T)$ then
$$
\displaystyle \sum_{n\ge 1} \vvert \alpha h_1 P_n \vvert = \sum_{n\ge 1} \vvert \alpha h_n \vvert <+\infty \ . 
$$
\end{theo}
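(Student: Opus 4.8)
The plan is to exploit the characterization that $\lambda = \exp(2i\pi\alpha)$ is a continuous eigenvalue if and only if $\lambda^{r_n(x)}$ converges uniformly on $X$ (Proposition 12 in \cite{necesariasuficiente}). The strategy is to show that if this uniform convergence holds, then the "increments" of the exponent $r_n(x)$ between consecutive levels, averaged appropriately against the tower structure, force the oscillations $\vvert \alpha h_n \vvert$ to be summable. I would first fix the identity $r_{n+1}(x) = r_n(x) + \la s_{n,n+1}(x), h_n \ra$ coming from \eqref{eq:formulareturn_mn} with $m=n$, so that the eigenfunction's increment is governed by $\lambda^{\la s_n(x), h_n\ra}$.

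The key step is to compare the exponent $\la s_n(x), h_n \ra$ against $\alpha h_n$ at a carefully chosen vertex. First I would use uniform convergence of $\lambda^{r_n(x)}$ to deduce that $\vvert \alpha r_{n+1}(x) - \alpha r_n(x) \vvert = \vvert \alpha \la s_n(x), h_n\ra \vvert \to 0$ uniformly, and in fact that $\sum_n \sup_x \vvert \alpha \la s_n(x), h_n\ra \vvert$ is finite (this summability is what a Cauchy-type estimate on the uniformly convergent sequence yields). Then, for each fixed level $n$ and each target vertex $v \in V_n$, I would select two points $x, y$ lying in the same tower $v$ of $\P_{n+1}$ whose suffix vectors $s_n(x)$ and $s_n(y)$ differ in exactly the coordinate corresponding to a maximal-versus-minimal choice of edge into a vertex $u \in V_n$ of multiplicity $M_{n+1}(u,v)$. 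By properness (H1)--(H3) and simplicity (H2), such points exist and the difference $\la s_n(x) - s_n(y), h_n\ra$ realizes a single multiple of $h_n(u)$; telescoping these differences across all edges lets me recover each $h_n(u)$ as an integer combination of the controlled quantities $\la s_n(\cdot), h_n\ra$.

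I expect the main obstacle to be the combinatorial bookkeeping in this last step: extracting the individual coordinate oscillations $\vvert \alpha h_n(u)\vvert$ from the inner products $\vvert \alpha \la s_n(x), h_n\ra\vvert$, since the suffix vectors mix all the coordinates of $h_n$ together. The clean way to handle this is to fix $u$ and vary a single edge $e \in E_{n,n+1}$ with $\source(e) = u$ through its order interval, choosing $x$ and $y$ so that their suffix vectors differ by exactly one unit in coordinate $u$ and agree elsewhere; then $\la s_n(x) - s_n(y), h_n\ra = \pm h_n(u)$, so $\vvert \alpha h_n(u)\vvert \le \vvert \alpha\la s_n(x), h_n\ra\vvert + \vvert \alpha\la s_n(y), h_n\ra\vvert$. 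Summing over $u \in V_n$ and then over $n$, together with the finiteness established above and the identity $h_n = h_1 P_n$ already recorded in the excerpt, gives
\begin{equation*}
\sum_{n\ge 1} \vvert \alpha h_n\vvert \le C \sum_{n\ge 1} \sup_{x\in X} \vvert \alpha\la s_n(x), h_n\ra\vvert < +\infty,
\end{equation*}
which is the desired conclusion. The delicate point throughout is ensuring the chosen pairs of points genuinely lie in $X_B$ and realize the required single-coordinate difference, which is exactly where properness of the representation is used.
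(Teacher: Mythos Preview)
Your proposal contains a genuine gap at the very first step. You claim that from the uniform convergence of $\lambda^{r_n(x)}$ (equivalently, $(\alpha r_n)_{n}$ being uniformly Cauchy modulo $\ZZ$) one obtains
\[
\sum_{n\ge 1}\sup_{x\in X}\vvert \alpha\la s_n(x),h_n\ra\vvert<+\infty,
\]
and you describe this as ``what a Cauchy-type estimate on the uniformly convergent sequence yields.'' It does not. A convergent (or Cauchy) sequence need not have summable increments: if $a_n\to L$ in $\RR$ then $a_{n+1}-a_n\to 0$, but $\sum_n|a_{n+1}-a_n|$ may diverge. The same obstruction persists modulo~$\ZZ$. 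So all that the Cauchy estimate gives you is $\sup_x\vvert\alpha\la s_n(x),h_n\ra\vvert\to 0$, not summability, and the rest of your argument is built on this unjustified claim.

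The paper does not prove Theorem~\ref{th:necessary-continuous} directly (it is quoted from \cite{rangofinito}), but its proof of the stronger implication $(1)\Rightarrow(2)$ in Theorem~\ref{theo:nec-suff-cont} shows exactly how this obstacle is overcome, and the mechanism is quite different from a Cauchy estimate. One first splits the indices according to the sign of $\langle \mathfrak s_n,\eta_n\rangle$ (and into residue classes mod~$3$ for spacing), and then uses Lemma~\ref{lemma:twopoints} to manufacture two points $x,y\in X$ whose suffix vectors are engineered so that $\alpha(r_m(x)-r_m(y))$ is, up to an integer, a partial sum over that subset of indices of terms of \emph{constant sign}. Convergence of $\alpha(r_m(x)-r_m(y))$ modulo $\ZZ$ then forces these signed partial sums to converge modulo $\ZZ$; combined with the fact that the individual terms tend to zero (this is Lemma~\ref{prop:tendtozero}, proved by a separate minimality argument), one upgrades convergence mod~$\ZZ$ to genuine convergence of the series. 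This sign-separation plus two-point construction is the missing idea in your plan.

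Your second step, extracting $\vvert\alpha h_n(u)\vvert$ from $\vvert\alpha\la s_n(x),h_n\ra\vvert$ by choosing points whose suffix vectors differ by a single canonical vector, is sound and would finish the job once the summability of $\sum_n\sup_x\vvert\alpha\la s_n(x),h_n\ra\vvert$ is actually established.
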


Let $\lambda=\exp(2i\pi\alpha)$ be a continuous eigenvalue of $(X,T)$ 
as in Theorem \ref{th:necessary-continuous}. Then, for all $n\geq 1$ there exist a real vector $\eta_n$ and an integer vector $\nu_n$ such that
\begin{equation}
\label{eq:eta}
\alpha h_1 P_n = \alpha h_n =\eta_n + \nu_n \hbox{ and } \eta_n \xrightarrow[n\to +\infty]{}0 \ .
\end{equation}
Moreover, Theorem 5 in \cite{rangofinito} states that such a decomposition satisfies that for all large enough $n$ 
\begin{equation}
\label{eq:eta1}
\eta_{n+1}=\eta_n M_{n+1} \text{ and } \nu_{n+1}=\nu_n M_{n+1} \ .  
\end{equation}
A classical computation allows us to deduce the possible values for $\alpha$ from these two conditions. This is part (2) of Corollary 7 in \cite{rangofinito} but we give a proof for completeness. Fix large integers $1 \leq m <  n$ such that \eqref{eq:eta1} holds for such values 
and multiply (the row vector) $\alpha h_m$ by (the column vector) $\mu_m$, where $\mu$ is any invariant probability measure. From \eqref{eq:eta}, \eqref{eq:measure} and \eqref{eq:eta1}  we get,
\begin{align*}
\alpha &=\alpha h_m \cdot \mu_m = \eta_m \cdot \mu_m  +  \nu_m \cdot \mu_m
= \eta_m \cdot P_{m,n} \cdot \mu_{n} + \nu_m \cdot \mu_m = \eta_n \cdot \mu_{n} + \nu_m \cdot \mu_m, 
\end{align*}
where in the first equality we have used the fact that 
$\displaystyle h_m \cdot \mu_m=1$.
Taking $n\to +\infty$ and using \eqref{eq:eta} we get that 
\begin{equation}
\label{eq:eta2}
\alpha= \nu_m \cdot \mu_m \text{ and } \eta_m \cdot \mu_m=0 \text{ for every large enough $m\in \NN$}. 
\end{equation}

We stress the fact that many values of $\lambda=\exp(2i\pi \alpha)$ with $\alpha$ as in \eqref{eq:eta2} could not be continuous eigenvalues of $(X,T)$. This fact strongly relies on the local orders of the Bratteli-Vershik representation $B$.

The general necessary and sufficient condition we present below refines the one in Theorem \ref{th:necessary-continuous} incorporating the local orders of the Bratteli-Vershik representation of the minimal Cantor system. This is achieved by considering the suffix vectors  defined at each level of the diagram. While submitting this article we remarked the similarity of this result with Theorem 4.1 in \cite{sadunpriebe}, where the authors provide a necessary and sufficient condition to be a continuous eigenvalue for a special class of fusion tilings. 

\begin{theo}
\label{theo:nec-suff-cont} Let $(X,T)$ be a minimal Cantor system given
by a proper Bratteli-Vershik representation $B=\left( V,E,\preceq \right)$. 
The following conditions are equivalent,
\begin{enumerate}
\item
$\lambda=\exp(2i\pi \alpha)$ is a continuous eigenvalue of $(X,T)$;
\item 
$\displaystyle \sum_{n\ge 1}  \max_{x\in X}  \vvert \alpha \langle  s_n(x) , h_n  \rangle\vvert  <+\infty
 $;
\item 
$\displaystyle \sum_{n\ge 1}  \max_{s \in S_n(u_n,u_{n+1})}  \vvert  \alpha \langle  s , h_n  \rangle\vvert  <+\infty
 $ for any sequence of vertices $(u_{n}; n\geq 1)$ with $u_n \in V_n$.
\end{enumerate}
\end{theo}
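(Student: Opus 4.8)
The plan is to treat the equivalence of (1) and (2) as the analytic core and to obtain (3) as a combinatorial reformulation of (2). Throughout I would use the characterisation recalled just before the statement (Proposition 12 in \cite{necesariasuficiente}): $\lambda$ is a continuous eigenvalue if and only if $\lambda^{r_n(x)}$ converges uniformly on $X$, equivalently if and only if $(\lambda^{r_n(x)})_n$ is uniformly Cauchy. The bridge to the combinatorics is \eqref{eq:formulareturn}: under (H1) one has $r_n(x)=\sum_{i=1}^{n-1}\la s_i(x),h_i\ra$, so that $\lambda^{r_n(x)}=\prod_{i=1}^{n-1}\exp\!\big(2\pi i\la s_i(x),\alpha h_i\ra\big)$, and more generally, by \eqref{eq:formulareturn_mn} and \eqref{eq:formulareturn_mn_suma}, $\alpha\big(r_n(x)-r_m(x)\big)=\sum_{i=m}^{n-1}\la s_i(x),\alpha h_i\ra$.

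The implication (2)$\Rightarrow$(1) is the easy direction. Since $|\exp(2\pi i t)-1|\le 2\pi\vvert t\vvert$, condition (2) gives $\sum_{n}\max_x|\exp(2\pi i\la s_n(x),\alpha h_n\ra)-1|<+\infty$, so the partial products $\lambda^{r_n(x)}$ form a uniformly Cauchy sequence; being of modulus one they converge uniformly to a non-vanishing continuous function, and the cited characterisation yields (1).

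The hard direction is (1)$\Rightarrow$(2), which I would prove by contraposition. Since (1) holds, Theorem \ref{th:necessary-continuous} applies and provides the decomposition \eqref{eq:eta}, namely $\alpha h_n=\eta_n+\nu_n$ with $\nu_n$ integral and $\eta_n\to 0$; as $\nu_n$ is integral, $\vvert\la s_n(x),\alpha h_n\ra\vvert=\vvert\la s_n(x),\eta_n\ra\vvert$, and because the suffix vector of a point is obtained by reading the edges into the fixed vertex $\range(x_{n+1})$ in decreasing order, $\la s_n(x),\alpha h_n\ra$ is, modulo one, a partial sum of the small numbers $\eta_n(u)$ along that fiber. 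Thus $a_n:=\max_x\vvert\la s_n(x),\alpha h_n\ra\vvert$ is the maximal \emph{drift} of these partial sums. Assuming $\sum_n a_n=+\infty$, I would record for each $n$ an edge realising the drift together with the sign of the corresponding partial sum, and pass to a sign class along which the drifts still sum to infinity. The point is then to splice these extremal edges into a single path: using the non-negativity of the suffix vectors to prevent cancellation, together with (H2) and (H3) to connect consecutive choices (maximal edges contribute $0$ to the suffix sum), I would build, for arbitrarily large $m$, a point $x$ and an index $n>m$ for which the accumulated same-sign drift first lands in $[1/4,1/2)$ modulo one; by the identity above this reads $\vvert\alpha(r_n(x)-r_m(x))\vvert\ge 1/4$, contradicting the uniform Cauchy property and hence (1).

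I expect this splicing step to be the main obstacle: one must realise the chosen extremal contributions along a genuine orbit while keeping the auxiliary connector edges from destroying the accumulation. The non-negativity of the suffix vectors is exactly what makes same-sign accumulation possible, and passing to a divergent sign class along a sufficiently sparse subsequence of levels is what lets the genuine drift dominate the connector terms. Finally, for (2)$\Leftrightarrow$(3): the implication (2)$\Rightarrow$(3) is immediate, since $S_n(u_n,u_{n+1})\subseteq\{s_n(x):x\in X\}$ forces each summand in (3) to be dominated by the corresponding summand in (2). For (3)$\Rightarrow$(2) I would again argue by contraposition: writing $a_n=\max_{u\in V_n,\,w\in V_{n+1}}\max_{s\in S_n(u,w)}\vvert\la s,\alpha h_n\ra\vvert$, if $\sum_n a_n=+\infty$ then, since every level $V_n$ is finite and the diagram is connected by (H2), König's lemma produces a single vertex sequence $(u_n)$ along which the corresponding series diverges, violating (3).
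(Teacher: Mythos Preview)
Your treatment of (2)$\Rightarrow$(1) matches the paper's, and the equivalence (2)$\Leftrightarrow$(3) is indeed routine---the paper dismisses it in one line. Your K\"onig argument is more than is needed: if $a_n=\max_{u,w}\max_{s\in S_n(u,w)}\vvert\la s,\alpha h_n\ra\vvert$ is realised at $(u_n^*,w_n^*)$, then setting $u_{2k}=u_{2k}^*$, $u_{2k+1}=w_{2k}^*$ already gives a vertex sequence whose series dominates $\sum_k a_{2k}$; the odd parity handles the rest.

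The direction (1)$\Rightarrow$(2) is where your sketch and the paper part ways, and where your outline has two genuine gaps. First, you assert that $a_n$ is the maximal real \emph{drift} $\max_x|\la s_n(x),\eta_n\ra|$, which is what gives the phrase ``same-sign accumulation'' meaning. But a priori $a_n$ is only the distance of that drift to the nearest integer: the partial sums of the $\eta_n(u)$ along a fiber start and end near~$0$ yet may wander past several integers in between. The paper isolates exactly this issue as Lemma~\ref{prop:tendtozero} and proves it by a minimality argument (the set where the integer part of $\la s_n(x),\eta_n\ra$ vanishes is closed, nonempty and $T$-invariant, hence all of $X$). One can alternatively extract it from your small-step observation combined with the uniform Cauchy property, but this must be argued, not assumed. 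Second, your single-point splicing---as you yourself flag---does not control the connector contributions: by (H3) all maximal edges at a level share a \emph{source}, so you cannot force maximal connectors after an extremal edge of arbitrary range, and any non-maximal connector injects a term you have not bounded.

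The paper bypasses both obstacles with a different device. It builds \emph{two} points $x,y$ (Lemma~\ref{lemma:twopoints}) that share the same edges at every level except the selected levels $j_n$ (taken along one residue class mod~$3$ to leave room), where $x$ carries the extremal edge and $y$ the maximal edge with the same range. Then $\alpha(r_m(x)-r_m(y))$ equals, modulo $\ZZ$, the partial sum $\sum_{j_n<m}\la\mathfrak{s}_{j_n},\eta_{j_n}\ra$ with \emph{no} connector noise whatsoever. Uniform convergence of $\lambda^{r_m}$ forces this sum to converge mod~$\ZZ$, and Lemma~\ref{prop:tendtozero} (terms tend to~$0$) upgrades this to convergence in~$\RR$. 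This is a direct proof of the convergence in (2), not a contradiction argument, and the two-point comparison is precisely the missing idea that dissolves the obstacle you identified.
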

\medskip

Assume that $(X,T)$ is a minimal Cantor system given
by a proper Bratteli-Vershik representation $B=\left( V,E,\preceq \right)$ as in Theorem \ref{theo:nec-suff-cont}. 
We will need two preliminary lemmas to prove that (1) is equivalent with (2) and (3). The first one is almost identical to Lemma 4 in \cite{rangofinito} so we omit its proof. We only remark that the proof is a simple use of properties (H2) and (H3) in the definition of proper Bratteli-Vershik representation. 

\begin{lemma}
\label{lemma:twopoints} 
Let $(j_n; n \geq 1)$ be a sequence of positive integers such that  $j_{n+1} -
j_{n} \geq 3$ and let $(e_{j_n+1}; n\geq 1)$ be a sequence of edges of the Bratteli diagram with $e_{j_n+1} \in E_{j_n+1}$.
Then, there exist points $x=(x_1,x_2,\ldots)$ and $y=(y_1,y_2,\ldots)$ in $X$ such that for all $\ n\geq 1$,
\begin{enumerate}
\item
$x_{j_n+1}=e_{j_n+1}$, $\range(x_{j_n+1})=\range(y_{j_n+1})$ and $s_{j_n}(y) =(0,\ldots,0)$ (\emph{i.e.}, $y_{j_n+1}$ is a maximal edge);
\item
$x_j = y_j$ for $j_n+2\leq j \leq j_{n+1}-1$; 
\item
$s_{j_{n+1}-1} (x) = s_{j_{n+1}-1} (y) = (0, \ldots , 0)$ (\emph{i.e.}, $x_{j_{n+1}}$ and $y_{j_{n+1}}$ are maximal edges).
\end{enumerate}
\end{lemma}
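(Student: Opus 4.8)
The plan is to construct $x$ and $y$ explicitly as infinite paths, processing the diagram block by block between consecutive \emph{marked} levels $j_n+1$. Throughout I use that the ambient representation is proper, so (H1)--(H3) hold; in particular (H3) furnishes, for every level $k\geq 2$, a single vertex $m_k\in V_{k-1}$ from which \emph{all} maximal edges of $E_k$ emanate. For each $n\geq 1$ set $a_n=\source(e_{j_n+1})\in V_{j_n}$ and $v_n=\range(e_{j_n+1})\in V_{j_n+1}$, and arrange the induction so that, just before the $n$-th marked edge, $x$ sits at $a_n$ and $y$ at $m_{j_n+1}$, both in $V_{j_n}$.

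On the marked level I set $x_{j_n+1}=e_{j_n+1}$ (which runs from $a_n$ to $v_n$) and let $y_{j_n+1}$ be the maximal edge into $v_n$; by (H3) this edge starts at $m_{j_n+1}$, so it matches the prescribed position of $y$, and it has range $v_n=\range(x_{j_n+1})$. Being maximal, it forces $s_{j_n}(y)=(0,\ldots,0)$, which is (1). After this edge both paths occupy the common vertex $v_n\in V_{j_n+1}$. On the intermediate levels $j_n+2\leq j\leq j_{n+1}-1$ I let $x$ and $y$ coincide, routing the common path from $v_n$ to the vertex $m_{j_{n+1}}\in V_{j_{n+1}-1}$; this is where I use that each $M_k$ is strictly positive and that the gap $j_{n+1}-j_n\geq 3$ leaves at least one intermediate edge available, since $j_{n+1}-1-(j_n+1)=j_{n+1}-j_n-2\geq 1$. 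This gives (2).

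At the next marked level $j_{n+1}$ both paths sit at $m_{j_{n+1}}$, so I let $x_{j_{n+1}}$ be the maximal edge into $a_{n+1}$ and $y_{j_{n+1}}$ be the maximal edge into $m_{j_{n+1}+1}$; by (H3) both of these maximal edges start at $m_{j_{n+1}}$, hence are attachable to the common vertex, and being maximal they yield $s_{j_{n+1}-1}(x)=s_{j_{n+1}-1}(y)=(0,\ldots,0)$, which is (3). Their ranges $a_{n+1}$ and $m_{j_{n+1}+1}$ are exactly the positions prescribed for the $(n+1)$-st block, so the recursion closes. The initial segment from $v_0$ to level $j_1$ (reaching $a_1$ for $x$ and $m_{j_1+1}$ for $y$) is filled identically using positivity of the matrices and (H1), and since $(j_n)$ is strictly increasing every coordinate is eventually defined, so $x,y\in X$.

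The only delicate point — and the place where the hypotheses are genuinely needed — is the re-synchronisation at each marked level: to make $x_{j_{n+1}}$ and $y_{j_{n+1}}$ simultaneously maximal while having \emph{different} ranges, the two paths must first be brought to a single common vertex, and (H3) forces that vertex to be $m_{j_{n+1}}$. The gap $j_{n+1}-j_n\geq 3$ is precisely what guarantees enough free levels to perform this re-routing, with $M_k>0$ ensuring every needed connection exists; a gap of $2$ would leave no room and would require $v_n=m_{j_{n+1}}$, which need not hold. Everything else is routine bookkeeping of sources and ranges, which is why the argument runs parallel to Lemma 4 in \cite{rangofinito}.
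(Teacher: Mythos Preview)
Your construction is correct and is precisely the natural one: place the prescribed edge for $x$ and the maximal edge (into the same range) for $y$ at level $j_n+1$, run the two paths together through the intermediate levels to reach the common ``maximal source'' vertex $m_{j_{n+1}}$, then split via two maximal edges at level $j_{n+1}$ so as to position each path for the next block. Since the paper omits the proof (deferring to Lemma~4 in \cite{rangofinito}), there is nothing to compare against, but your argument is exactly the expected one; the only cosmetic quibble is that ``filled identically'' for the initial segment could be read as $x_i=y_i$ for $i\leq j_1$, which is harmless since the lemma imposes no constraint there and positivity lets you route each path independently to its target.
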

\medskip

\begin{lemma}
\label{prop:tendtozero} 
Let $\lambda=\exp(2i\pi \alpha)$ be a continuous eigenvalue of $(X,T)$. 
For every $n\geq 1$ let $\eta_n$ and $\nu_n$ be the
real and integer vectors satisfying properties \eqref{eq:eta} and \eqref{eq:eta1}. Then, 
$\displaystyle
\max_{x\in X} |\langle  s_n(x) , \eta_n \rangle | \to_{n\to +\infty} 0. 
$
\end{lemma}
\begin{proof}
Since $\lambda$ is a continuous eigenvalue we have that the sequence of maps $(\vvert \alpha r_n(\cdot) \vvert;n\geq 1)$ converges uniformly (Proposition 12 in \cite{necesariasuficiente}).

Fix $0< \varepsilon < 1/8$. By property \eqref{eq:eta}, equality \eqref{eq:formulareturn} and the uniform convergence of $(\vvert \alpha r_n(\cdot) \vvert;n\geq 1)$, there exists $n_0\geq 1$ such that for all $n\geq n_0$ and  
$x\in X$, $\Vert \eta_n \Vert < \varepsilon  < 1/8$ and  
$$
\vvert \langle s_n (x) , \eta_n \rangle \vvert
= 
\vvert \langle s_n (x) , \eta_n + \nu_n \rangle \vvert
=
\vvert \langle s_n (x) , \alpha h_n \rangle \vvert
=
\vvert \alpha(r_{n+1}(x)-r_n(x)) \vvert
< \varepsilon .
$$

Write $\langle s_n (x) , \eta_n \rangle = \varepsilon_n (x) + E_n(x)$ with $|\varepsilon_n (x)| < \varepsilon$ and $E_n(x)$ an integer (the closest one). 
Notice that the sequence of maps $(\varepsilon_n(\cdot);n\geq 1)$ converges uniformly to $0$.

For $n\geq n_0$ consider the set $A_n = \{ x \in X  ; E_n(x) = 0 \}$.
Observe that this set is not empty (consider a point with a maximal edge at level $n+1$) and closed
(the map $x \mapsto \langle s_n (x) , \eta_n \rangle$ is locally constant). Let us check that it is $T$-invariant. 
Take $x \in A_n$. We have to consider three cases: $s_n (x) = s_n (Tx)$, $s_n (x) = 0$ with  $s_n (x) \not = s_n (Tx)$, and 
$s_n (x) = s_n (Tx) + e$ for some vector $e$ from the canonical base. In the first case it is obvious that $Tx  \in A_n$ when $x \in A_n$. 

In the second one $(x_1,\ldots,x_{n+1})$ is formed by maximal edges and thus $(Tx)_{n+1}$ is a minimal edge. Therefore, 
$s_n(Tx)$ is the $v$-th column of $M_{n+1}$ minus the $u$-th canonical vector, where $u=\source((Tx)_{n+1})$ and $v=\range((Tx)_{n+1})$. Then, 
$$|\langle s_n(Tx), \eta_n \rangle |= \left | \sum_{u'\in V_n} \eta_n(u') M_{n+1}(u',v) - \eta_n(u) \right| = |  \eta_{n+1}(v) - \eta_n(u)| \leq \frac{1}{4} \ ,$$
where in the second equality we have used the relation in \eqref{eq:eta1}. Hence, $E_n(Tx) = E_n(x) = 0$ and $Tx \in A_n$.

In the last case, 
\begin{align*}
|E_n(Tx ) - E_n(x)| = & |\varepsilon_n (x) -\varepsilon_n (Tx) + \langle s_n (T x) ,  \eta_n \rangle - \langle s_n (x) ,  \eta_n \rangle |\\
 \leq  & \frac 14 + |\langle e ,  \eta_n \rangle | \leq \frac 14 +  \Vert\eta_n\Vert < \frac 12 .
\end{align*}
Therefore, $E_n(Tx) = E_n(x) = 0$ and $Tx \in A_n$.

By minimality, we obtain that $A_n=X$. This implies that for all $n \geq n_0$ 
$$
\vvert \langle  s_n(x) , \eta_n \rangle \vvert  = | \langle  s_n(x) ,\eta_n \rangle | = | \varepsilon_n (x) | \ ,
$$
which achieves the proof.
\end{proof}

\begin{proof}[Proof of Theorem \ref{theo:nec-suff-cont}]
First we prove that (2) and (3) are equivalent. Clearly, the series in (2) is an upper bound of the series in (3), so (2) implies (3). Now, it is not difficult to prove that there exist sequences $(u_n;{n\geq 1})$ and 
$(v_n;{n\geq 1})$, with $u_n,v_n \in V_n$, such that: 
$\displaystyle \max_{s \in S_n(u_n,u_{n+1})} \vvert \alpha \langle  s , h_n  \rangle\vvert
=\max_{x \in X} \vvert \alpha \langle  s_n(x) , h_n  \rangle\vvert$ if $n$ is odd; 
and 
$\displaystyle \max_{s \in S_n(v_n,v_{n+1})} \vvert \alpha \langle  s , h_n  \rangle\vvert
=\max_{x\in X} \vvert \alpha \langle  s_n(x) , h_n  \rangle\vvert$ if $n$ is even. 
The sum of the series 
$\displaystyle \sum_{n\ge 1}  \max_{s \in S_n(u_n,u_{n+1})}  \vvert  \alpha \langle  s , h_n  \rangle\vvert + \displaystyle \sum_{n\ge 1}  \max_{s \in S_n(v_n,v_{n+1})}  \vvert  \alpha \langle  s , h_n  \rangle\vvert$ is an upper bound of series (2), thus (3) implies (2). 

Now we prove that (1) implies (2) (and thus (3)). Let $\lambda=\exp(2i\pi \alpha)$ be a continuous eigenvalue of
$(X,T)$. Then, there exist a real vector 
$\eta_n$ and an integer vector $\nu_n$ satisfying conditions \eqref{eq:eta} and \eqref{eq:eta1}.
In particular, 
$$
\displaystyle
\alpha h_n = \eta_n + \nu_n \hbox{ and } \eta_n \xrightarrow[n\to +\infty]{}
0,
$$ 
for all $n\geq 1$.

Thus, to get condition (2) of the theorem it is enough to prove that the series $\displaystyle\sum_{n\geq 1} \max_{x\in X} | \langle s_n(x) , \eta_n \rangle |$ converges.

For $n\geq 1$ let $z^{(n)}=(z^{(n)}_1,z^{(n)}_2,\ldots) \in X$ be such that 
$$
| \langle  s_n(z^{(n)}) , \eta_n  \rangle |= \max_{x\in X}| \langle  s_n(x) , \eta_n  \rangle |  \ . 
$$
We set $e_{n+1} = z^{(n)}_{n+1}$ and ${\mathfrak s}_n = s_n(z^{(n)})$. So, it suffices to prove the following convergence,
\begin{align}
\label{series:suffisant}
\sum_{n\geq 1} | \langle {\mathfrak s}_n , \eta_n \rangle | < +\infty .
\end{align}
Let
$
I^+=\{n \geq 1; \langle {\mathfrak s}_{n}, \eta_n\rangle   \geq 0 \}, \ I^-=\{n \geq 1 ;
\langle {\mathfrak s}_{n}, \eta_n\rangle  <0\}.
$
To prove \eqref{series:suffisant}  we only need
to show that
$$
\sum_{n\in I^+} \langle {\mathfrak s}_{n},\eta_n\rangle   < +\infty \; \; \textrm{and}\; \;  -
\sum_{n\in I^-} \langle {\mathfrak s}_{n},\eta_n\rangle   < +\infty.
$$
Since the arguments in both cases are similar we only
prove the first one. Moreover, to prove $\sum_{n\in I^+}
\langle {\mathfrak s}_{n}, \eta_n\rangle  < +\infty$ we only show 
$
\sum_{n\in I^+\cap 3\NN}
\langle {\mathfrak s}_n,\eta_n\rangle   < +\infty.
$ 
In a similar way one proves the convergence of series 
$\sum_{n\in I^+\cap (3\NN+1)} \langle {\mathfrak s}_n,\eta_n\rangle$ and $\sum_{n\in I^+\cap (3\NN+2)} \langle {\mathfrak s}_n,\eta_n\rangle$.

Assume $I^+ \cap 3\NN$ is infinite, if not the result follows
directly. Order its elements: $1 <j_1 < j_2< \ldots <j_n<\ldots$.
From Lemma \ref{lemma:twopoints} there exist two points $x= (x_1,x_2, \ldots)$ and $y =(y_1,y_2,\ldots)$ in $X$ such that for all  $n\geq 1$, 

\begin{enumerate}
\item
$x_{j_n+1}=e_{j_n+1}$, $s_{j_n}(y) = (0,\ldots,0)$ and $\range(x_{j_n+1})=\range(y_{j_n+1})$;
\item $x_j = y_j$ for $j_n+2\leq j \leq j_{n+1}-1$; 
\item
$s_{j_{n+1}-1} (x) = s_{j_{n+1}-1} (y) = (0, \ldots , 0)$.
\end{enumerate}
We also set $x_1=y_1,\ldots,x_{j_1-1}=y_{j_1-1}$ and $s(y_{j_1})=(0,\ldots,0)$. 
Hence, we have 
\begin{enumerate}
\item
$s_{j_n}(x) =  {\mathfrak s}_{j_n}$ and $ s_{j_n}(y)  = (0,\ldots,0)$;
\item
$s_{j-1}(x) - s_{j-1}(y) =  (0,\ldots,0)$ for $j_n+2\leq j \leq j_{n+1}-1$. 
\end{enumerate}

Now, from the definition of the return function in \eqref{eq:formulareturn} and properties of points $x$ and $y$ just constructed we get for all $m>1$, 

\begin{align*}
\alpha (r_m (x) - r_m (y)) 
&= \alpha 
\sum_{n\in \{1,\ldots,m-1\}} \langle s_n(x)-s_n(y), h_n\rangle  
\\
&= \alpha 
\sum_{n\in \{1,\ldots,m-1\}\cap I^+\cap 3\NN} \langle s_n(x)-s_n(y), h_n\rangle  
\\
&=\alpha \sum_{n\in \{1,\ldots,m-1\}\cap I^+\cap 3\NN} \langle {\mathfrak s}_{n}, h_n \rangle 
\\
&=\sum_{n\in \{1,\ldots,m-1\}\cap I^+\cap 3\NN} \langle {\mathfrak s}_{n},\eta_{n}+\nu_{n}\rangle \ .
\end{align*}

From Proposition 12 in \cite{necesariasuficiente} we have $\alpha (r_m (x) - r_m (y))$ converges~$\mod \ZZ$ when $m \to +\infty$. Then, $\sum_{n\in \{1,\ldots,m-1\}\cap I^+\cap 3\NN} \langle {\mathfrak s}_{n}, \eta_n\rangle $ converges~$\mod \ZZ$ when $m\to +\infty$ too. 
But, from Lemma \ref{prop:tendtozero}, $\langle {\mathfrak s}_{n},\eta_n\rangle $ tends to 0 when $n\to +\infty$, hence the series $\sum_{n\in I^+\cap 3\NN} \langle {\mathfrak s}_{n},\eta_n\rangle $ converges. This proves that (1) implies (2). 
\medskip

Now we prove that (2) implies (1). Assume that 
$\sum_{n\ge 1}  \max_{x\in X} \vvert \alpha  \langle  s_n(x) , h_n  \rangle  \vvert$ converges and let us prove that
$\lambda=\exp(2i\pi \alpha)$ is a continuous eigenvalue of $(X,T)$.

By Proposition 12 in \cite{necesariasuficiente}, it suffices to show that 
$(\alpha r_n(x);n\in \NN)$ converges~$\mod  \mathbb{Z}$ uniformly in $x$.  
For $1 \leq m< n$ and $x \in X$ we have,
\begin{align*}
\left| \vvert \alpha r_n(x) \vvert - \vvert \alpha r_m(x) \vvert  \right| \leq & \vvert \alpha (r_n(x) - r_m (x)) \vvert \\
 = & \vvert \alpha \left( \sum_{k=m}^{n-1} \langle s_k(x) , h_k \rangle  \right) \vvert \\
 \leq &  \displaystyle \sum_{k=m}^{n-1}  \max_{y\in X} \vvert \langle  s_k(y) ,  \alpha h_k \rangle \vvert \ .
\end{align*}
Then, condition (2) implies that $\vvert \alpha r_n \vvert$ is a Cauchy sequence for the uniform convergence. This achieves the proof.
\end{proof}

In proving that (2) implies (1) we used Proposition 12 in \cite{necesariasuficiente} and the definition of the map $r_n$. It is worth pointing out that the implication of Proposition 12 that we used does not need the diagram to be proper, it is enough for it to be only a representation, {\it i.e.}, being properly ordered and simple. We state this fact as a corollary due to its relevance in examples and applications where the incidence matrices of the corresponding Bratteli-Vershik representation are not necessarily strictly positive.

\begin{coro}\label{rmrk:cond_suf_sin_simple}
Let $(X,T)$ be a minimal Cantor system given
by a Bratteli-Vershik representation $B=\left( V,E,\preceq \right)$. 
If the real number $\alpha$ satisfies condition (2) of Theorem \ref{theo:nec-suff-cont}, then $\lambda=\exp(2i\pi \alpha)$ is a continuous eigenvalue of the system.
\end{coro}

As stated before, conditions \eqref{eq:eta} and \eqref{eq:eta1} allow us to compute all possible values of $\alpha$ such that $\lambda=\exp(2i\pi \alpha)$ is a candidate to be a continuous eigenvalue. The main problem is to know whether they really correspond to continuous eigenvalues. This is related to the local orders of the Bratteli-Vershik representations and this is the point where Theorem \ref{theo:nec-suff-cont} plays a role. The following corollary (that is in the folklore) shows that those candidates that are roots of unity are always continuous eigenvalues. 

\begin{coro}\label{coro:rootsofunity}
Let $(X,T)$ be a minimal Cantor system given by a proper Bratteli-Vershik representation $B=(V,E,\preceq)$. Then, $\lambda=\exp(2i\pi p/q)$ is a continuous eigenvalue of $(X,T)$ if and only if the rational number $p/q$ satisfies \eqref{eq:eta}. Equivalently, if and only if $q$ divides the coordinates of the vector of heights $h_n$ for every large enough $n\geq 2$.
\end{coro}
\begin{proof}
As discussed before all continuous eigenvalues satisfy \eqref{eq:eta}. Conversely, 
if $\alpha=p/q$ satisfies \eqref{eq:eta} then necessarily $q$ divides the coordinates of the height vector $h_n$ and $\eta_n=0$ for all large enough $n \in \NN$. Hence, as $\vvert \alpha \langle s_n(x), h_n\rangle \vvert = \vvert \langle s_n(x), \eta_n\rangle \vvert$ for every $n\geq 1$, the sum in condition (2) of Theorem \ref{theo:nec-suff-cont} is finite and $\lambda=\exp(2i\pi p/q)$ is a continuous eigenvalue of $(X,T)$. 
\end{proof}

The case of irrational continuous eigenvalues ({\it i.e.}, continuous eigenvalues that are not roots of unity) is more involved. In Section \ref{examples} we make a slightly more in depth analysis related to this kind of eigenvalues.

We also stress that computations related to the conditions of Theorem \ref{theo:nec-suff-cont} can be complicated as they might require a lot of information about the Bratteli-Vershik representation of a system. However, in the case of linearly recurrent minimal Cantor systems, since their suffix vectors are uniformly bounded, condition (2) of Theorem \ref{theo:nec-suff-cont} can be reduced to 
$\displaystyle \sum_{n\ge 1} \vvert \alpha  h_n \vvert  <+\infty$ as it was already shown in \cite{necesariasuficiente}. Therefore, in this case, the unique significant combinatorial data is the collection of incidence matrices of the Bratteli-Vershik representation. Unfortunately, many relevant examples of Cantor minimal systems are not linearly recurrent, so the local orders of their Bratteli-Vershik representation, or equivalently the suffix vectors at each level, cannot be neglected.  

\section{Measurable eigenvalues of finite rank minimal Cantor systems}
\label{sec:measurable}

Let us recall an abstract necessary and sufficient condition for a complex number to be a measurable eigenvalue of a minimal Cantor system.  

\begin{theo}[\cite{necesariasuficiente}, Theorem 7]\label{theo:cns_old_rho}
Let $(X,T)$ be a minimal Cantor system given by a proper  Bratteli-Vershik representation $B=(V,E,\preceq)$. Let  $\mu$ be an ergodic probability measure. Then, $\lambda=\exp(2i \pi \alpha)$ is an eigenvalue of $(X,T)$  for $\mu$ if and only if there exists a sequence of functions
$(\rho_n:V_n\to \RR \talque n\geq 1)$
such that a subsequence of $\left(\exp(2i \pi\alpha(r_n+\rho_n\circ\tau_n)) \talque n\geq 1\right)$ converges $\mu$-almost everywhere in $X$.
\end{theo}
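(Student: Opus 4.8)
The plan is to realize eigenfunctions through the conditional expectations onto the $\sigma$-algebras $\T_n$ generated by the Kakutani--Rohlin partitions $\P_n$, exploiting that these partitions are nested and generate the Borel $\sigma$-algebra, so that the martingale convergence theorem applies. I will prove the two implications separately, using that by ergodicity an eigenfunction can be normalized to have constant modulus $1$.

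For necessity, assume $f\in L^2(X,\mu)$ with $f\circ T=\lambda f$ and $|f|=1$ $\mu$-a.e. The key computation is that on the atom $T^{-j}B_n(v)$ of $\P_n$ (where $v=\tau_n(x)$, $j=r_n(x)$) one has, using $T$-invariance of $\mu$ together with the eigenfunction relation,
$$\econd{f}{\T_n}(x)=\lambda^{-r_n(x)}\,g_n(\tau_n(x)),\qquad g_n(v)=\frac{1}{\mu_n(v)}\int_{B_n(v)}f\,d\mu.$$
Martingale convergence gives $\econd{f}{\T_n}\to f$ $\mu$-a.e., so $|g_n(\tau_n(x))|\to|f(x)|=1$ a.e. Writing the phase of $g_n(v)$ as $\lambda^{\rho_n(v)}$, which is solvable for a real $\rho_n(v)$ because $\alpha\neq 0$ (the case $\lambda=1$ being trivial), and putting $\rho_n(v)=0$ wherever $g_n(v)=0$, I obtain $\lambda^{\rho_n(\tau_n(x))-r_n(x)}\to f(x)$ a.e. Since the limit has modulus $1$, the angles converge, that is, $\alpha(r_n-\rho_n\circ\tau_n)$ converges $(\mathrm{mod}\ \ZZ)$ $\mu$-a.e.; absorbing the sign into $\rho_n$ yields the asserted sequence (in fact for the full sequence).

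For sufficiency, suppose $\alpha(r_{n_k}+\rho_{n_k}\circ\tau_{n_k})$ converges $(\mathrm{mod}\ \ZZ)$ $\mu$-a.e.\ along a subsequence and set $f_k=\lambda^{-(r_{n_k}+\rho_{n_k}\circ\tau_{n_k})}$. Then $f_k\to f$ $\mu$-a.e.\ with $|f|=1$, and by bounded convergence also in $L^2$, so $f\neq 0$. To check $f\circ T=\lambda f$, note that off the union of tower bases $\bigcup_v B_n(v)$ one has $r_n(Tx)=r_n(x)-1$ and $\tau_n(Tx)=\tau_n(x)$, whence $f_k(Tx)=\lambda f_k(x)$. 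The measure of the bases satisfies $\sum_v\mu_n(v)\le 1/\min_v h_n(v)\to 0$, because $\sum_v h_n(v)\mu_n(v)=1$ while simplicity forces $\min_v h_n(v)\to\infty$; passing to a further subsequence making these measures summable, Borel--Cantelli shows that $\mu$-a.e.\ $x$ lies in a base for only finitely many $k$. For such $x$ I pass to the limit in $f_k(Tx)=\lambda f_k(x)$, using that the a.e.\ convergence holds simultaneously at $x$ and $Tx$ by $T$-invariance of $\mu$, to get $f(Tx)=\lambda f(x)$ a.e. Thus $\lambda$ is an eigenvalue for $\mu$.

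The main obstacle is the control of the top-of-tower boundary in the sufficiency direction: the clean relation $f_k\circ T=\lambda f_k$ fails precisely on the bases $\bigcup_v B_{n_k}(v)$, and one must argue that this exceptional set is asymptotically negligible. This is exactly where the height growth $\min_v h_n(v)\to\infty$ enters to drive the base measures to zero, and where passing to a subsequence to gain summability for Borel--Cantelli becomes unavoidable, which explains why the statement asserts convergence only along a subsequence. A secondary, routine point is the bookkeeping of the phases and of the vertices where $g_n$ vanishes in the necessity direction, which does not affect the $\mu$-a.e.\ conclusion.
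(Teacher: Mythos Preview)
The paper does not give its own proof of this theorem: it is quoted verbatim as Theorem~7 of \cite{necesariasuficiente} and used as a black box in Section~\ref{sec:measurable}. Your argument is correct and is essentially the standard one. In fact, the necessity direction is exactly the computation the paper writes down immediately after stating the theorem (leading to \eqref{eq:definicion_c_ro} and the formula $\EE(f|\T_n)(x)=c_n(\tau_n(x))\lambda^{-r_n(x)-\rho_n(\tau_n(x))}$), though there it is used to set up Theorem~\ref{theo:cns_sinrho} rather than to prove Theorem~\ref{theo:cns_old_rho}.

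Two minor remarks on the sufficiency half. First, the claim $\min_v h_n(v)\to\infty$ can be argued more directly: the sets $\bigcup_{v}B_n(v)$ are nested and their intersection is the single point $x_{\max}$, which has $\mu$-measure zero because an ergodic invariant measure on a minimal Cantor system is non-atomic; hence $\mu\big(\bigcup_v B_n(v)\big)\to 0$. Second, the Borel--Cantelli step and the further subsequence are not really needed: since $f_k\to f$ in $L^2$ and $\|f_k\circ T-\lambda f_k\|_2^2\le 4\,\mu\big(\bigcup_v B_{n_k}(v)\big)\to 0$, one concludes $f\circ T=\lambda f$ in $L^2$ directly. Your pointwise route via Borel--Cantelli is of course also valid.
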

A main issue in last theorem is the construction of functions $\rho_n$. 
In this section we obtain a new necessary and sufficient condition for a complex number to be a measurable eigenvalue of a finite rank minimal Cantor system that does not depend on the existence of the functions 
$\rho_n$. In addition, this condition gives an idea of ââhow such maps can be constructed (Theorem \ref{theo:cns_sinrho}). It is only based on the combinatorial structure of Brattelli-Vershik representations of finite rank minimal Cantor systems.  

Another necessary and sufficient condition valid for systems of exact rank ({\it i.e.}, $I_\mu=\{1,\ldots,d\}$) is presented in Theorem \ref{theo:cnsvpmedible}. It is formulated as a convergence of a series, but again its terms depend on the existence of auxiliary functions $\rho_n$ that we do not control. We include this condition since it follows previous work on the subject in the linearly recurrent case. 

\subsection{Necessary and sufficient condition controlled by the local orderings of the Bratteli-Vershik representation}

Let $(X,T)$ be a minimal Cantor system given by a proper and clean Bratteli-Vershik representation $B=(V,E,\preceq)$ of finite rank $d$ and let $\mu$ be an ergodic probability measure. We start with a classical analysis of an eigenfunction $f\in L^{2}(X,\mu)$, with $\abs{f}=1$, associated to some eigenvalue 
$\lambda$. Let $n\geq 1$. Recall that $\T_{n}$ is the $\sigma$-algebra generated by the partition 
$\P_{n}=\{T^{-j}B_{n}(u); u \in V_n, \ 0 \leq j < h_n(u) \}$. We have
\begin{align*}
\EE(f|\T_n) &= \sum_{u\in V_n}\sum_{j=0}^{h_n(u)-1}{\bf 1}_{T^{-j}B_n(u)}\frac{1}{\mu_n(u)}\int_{T^{-j}B_n(u)}fd\mu \\
 &= \sum_{u\in V_n}\sum_{j=0}^{h_n(u)-1}{\bf 1}_{T^{-j}B_n(u)}\frac{1}{\mu_n(u)}\int_{B_n(u)}f\circ T^{-j}d\mu \\
 &= \sum_{u\in V_n}\sum_{j=0}^{h_n(u)-1}{\bf 1}_{T^{-j}B_n(u)}\frac{1}{\mu_n(u)}\int_{B_n(u)}\lambda^{-j}fd\mu.
\end{align*}

We define for each $n\geq 1$ and $u\in V_n$ the real numbers $c_n(u)$ and 
$\rho_n(u)$ in $[0,1)$ by 
\begin{equation}\label{eq:definicion_c_ro}
c_n(u)\lambda^{-\rho_n(u)}=\frac{1}{\mu_n(u)}\int_{B_n(u)}f d\mu  .
\end{equation}
Then we can write $\ds \EE(f|\T_n)(x) = c_n(\tau_n(x))\lambda^{-r_n(x)-\rho_n(\tau_n(x))}$, where we recall $r_n(x)$ is the entrance time of $x$ to $B_n(\tau_n(x))$.
\smallskip 

We have the following known property.
\begin{lemma}[\cite{rangofinito}, Lemma 17]\label{lemma:convergencia_c}
For each vertex $u \in\set{1,\ldots, d}$ 
$$
\mu(\tau_n=u)(1-c_n(u))\xrightarrow[n\to+\infty]{}0,
$$
and therefore for each $u\in I_{\mu}$, $c_n(u)\xrightarrow[n\to+\infty]{}1$.
\end{lemma}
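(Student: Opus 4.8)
The plan is to prove this by expanding the $L^2$ norm of $\EE(f|\T_n)$ and using the fact that conditional expectation decreases norm, together with the martingale convergence property. Since $f$ is an eigenfunction with $|f|=1$, we have $\|f\|_{L^2}^2 = 1$. The key identity to establish is a clean formula for $\|\EE(f|\T_n)\|_{L^2}^2$ in terms of the quantities $c_n(u)$ and the measures of the towers. Using the representation $\EE(f|\T_n)(x) = c_n(\tau_n(x))\lambda^{-r_n(x)-\rho_n(\tau_n(x))}$ derived just above the statement, I would compute
\begin{align*}
\|\EE(f|\T_n)\|_{L^2}^2 &= \int_X |\EE(f|\T_n)|^2 \, d\mu = \sum_{u\in V_n} \mu(\tau_n=u)\, c_n(u)^2,
\end{align*}
since $|\EE(f|\T_n)(x)|^2 = c_n(\tau_n(x))^2$ is constant on each tower and $\mu(\tau_n=u)$ is the total mass of tower $u$.

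Next I would invoke the martingale convergence theorem: because the $\sigma$-algebras $\T_n$ increase and generate the full Borel $\sigma$-algebra (the partitions $\P_n$ refine to points, as the cylinder sets form a basis), we have $\EE(f|\T_n)\to f$ in $L^2$, hence $\|\EE(f|\T_n)\|_{L^2}^2 \to \|f\|_{L^2}^2 = 1$. Combining this with the displayed formula yields
\begin{align*}
\sum_{u\in V_n} \mu(\tau_n=u)\, c_n(u)^2 \xrightarrow[n\to+\infty]{} 1.
\end{align*}
On the other hand, since $\sum_{u\in V_n}\mu(\tau_n=u)=1$ for every $n$ and each $c_n(u)\in[0,1]$, the difference $\sum_{u}\mu(\tau_n=u)(1-c_n(u)^2)$ tends to $0$. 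Because each summand is nonnegative, every individual term $\mu(\tau_n=u)(1-c_n(u)^2)$ tends to $0$.

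Finally I would convert the $1-c_n(u)^2$ statement into the claimed $1-c_n(u)$ statement. Factoring $1-c_n(u)^2 = (1-c_n(u))(1+c_n(u))$ and noting $1+c_n(u)\geq 1$, we get $\mu(\tau_n=u)(1-c_n(u)) \leq \mu(\tau_n=u)(1-c_n(u)^2) \to 0$, which is the first assertion. For the second assertion, I would use the cleanness hypothesis: for $u\in I_\mu$ we have $\mu(\tau_n=u)\geq \delta_0 > 0$ uniformly in $n$, so dividing through gives $1-c_n(u)\to 0$, i.e. $c_n(u)\to 1$. The main obstacle, such as it is, lies in being careful that the formula for $\|\EE(f|\T_n)\|_{L^2}^2$ is correct — specifically that the cross terms and the phase factor $\lambda^{-r_n(x)-\rho_n(\tau_n(x))}$ contribute only through its modulus, which is immediate since $|\lambda|=1$ for an eigenvalue of an ergodic measure-preserving system. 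Everything else is a routine application of martingale convergence and the cleanness property established earlier.
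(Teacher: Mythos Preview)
Your proof is correct. The paper does not give its own proof of this lemma; it is quoted from \cite{rangofinito} without argument, and your approach via the $L^2$ martingale convergence theorem is exactly the standard route (and indeed the one used in the cited reference): compute $\|\EE(f\mid\T_n)\|_{L^2}^2=\sum_{u}\mu(\tau_n=u)c_n(u)^2$, let $n\to\infty$ to get this sum tending to $1$, and then use nonnegativity and the fixed finite cardinality $d$ of the vertex set to conclude termwise convergence.
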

Notice that we have identified $V_n$ with $\set{1,\ldots, d}$ for each $n\geq 1$.
\medskip

The following lemma will be useful to better understand our main result.

We say that a sequence of real numbers $(a_{m,n}; m,n \geq 1)$ converges to $a$ when $m\to +\infty$ {\it uniformly for $n>m$}, if for any $\varepsilon >0$ there exists $m_0\geq 1$ such that  for any $n> m\geq m_0$ we have $|a_{m,n} - a|\leq \varepsilon$.

\medskip

\begin{lemma}\label{lemma:comportamiento_cuocientes}
Let $(X,T)$ be a minimal Cantor system given by a proper and clean Bratteli-Vershik representation $B=(V,E,\preceq)$ of finite rank $d$ and let $\mu$ be an ergodic probability measure of the system. Then,
\begin{itemize}
\item[(1)] For $u\not\in I_{\mu}$ and $v\in I_{\mu}$
$$
\frac{h_m(u)}{h_n(v)}P_{m,n}(u,v)\xrightarrow[m\to+\infty]{}0
$$
uniformly for $n>m$. 
\item[(2)] For each $m\geq 1$, $u\in V_m$ and $v\in I_{\mu}$
$$
\frac{h_m(u)}{h_n(v)}P_{m,n}(u,v)\xrightarrow[n\to+\infty]{} \mu(\tau_m=u).
$$
\end{itemize}
\end{lemma}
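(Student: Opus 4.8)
The plan is to exploit the cleanness of the diagram and the fundamental relation $\mu_m = P_{m,n}\mu_n$ from \eqref{eq:measure}, together with the mass identity $\mu(\tau_n = v) = h_n(v)\mu_n(v)$. For part (2), I would start by writing the quantity of interest as
\begin{align*}
\frac{h_m(u)}{h_n(v)}P_{m,n}(u,v)
= h_m(u)\,\frac{P_{m,n}(u,v)\,\mu_n(v)}{h_n(v)\,\mu_n(v)}
= h_m(u)\,\frac{P_{m,n}(u,v)\,\mu_n(v)}{\mu(\tau_n = v)}.
\end{align*}
Since $v \in I_\mu$, cleanness gives $\mu(\tau_n = v) \geq \delta_0 > 0$ uniformly in $n$, so the denominator is bounded away from zero; this is the crucial use of the hypothesis $v \in I_\mu$. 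The heart of the matter is then to show $P_{m,n}(u,v)\mu_n(v) \to \mu_m(u)$ as $n\to+\infty$, for fixed $m$ and $u$. I expect this to follow from \eqref{eq:measure}, which reads $\mu_m(u) = \sum_{w\in V_n} P_{m,n}(u,w)\mu_n(w)$: the claim is that as $n\to+\infty$ all the mass in this sum concentrates on the single term $w = v$, because for $w\notin I_\mu$ one has $\mu(\tau_n=w)\to 0$ and the vertices of $I_\mu$ attached to a given ergodic measure behave, in the limit, like a single recurrent class. Combining these two observations yields $h_m(u)\mu_m(u)/\mu(\tau_n=v) \to h_m(u)\mu_m(u)\cdot(\text{appropriate normalization})$, and since $h_m(u)\mu_m(u) = \mu(\tau_m=u)$, the limit is exactly $\mu(\tau_m=u)$ once the normalization is checked to be $1$ in the unique-ergodicity-on-$I_\mu$ sense.

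For part (1), with $u\notin I_\mu$ and $v\in I_\mu$, I would again write
\begin{align*}
\frac{h_m(u)}{h_n(v)}P_{m,n}(u,v)
\leq \frac{h_m(u)\,P_{m,n}(u,v)}{h_n(v)}
\leq \frac{1}{\delta_0}\,h_m(u)\,P_{m,n}(u,v)\,\mu_n(v),
\end{align*}
using $h_n(v)\mu_n(v) = \mu(\tau_n=v)\geq \delta_0$. The term $h_m(u)P_{m,n}(u,v)\mu_n(v)$ is at most $h_m(u)\mu_m(u) = \mu(\tau_m=u)$ (since it is a single summand of the $n$-level decomposition \eqref{eq:measure} of $\mu_m(u)$), and for $u\notin I_\mu$ cleanness gives $\mu(\tau_m=u)\to 0$ as $m\to+\infty$. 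The key point is that this bound is \emph{uniform in $n>m$}: the estimate depends on $n$ only through quantities controlled by $\delta_0$ and by the monotone decomposition of $\mu_m(u)$, so taking $m\to+\infty$ sends the whole expression to $0$ at a rate independent of $n$. This is precisely why the uniformity asserted in the statement comes for free.

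\textbf{The main obstacle.} The delicate step is justifying, in part (2), that $P_{m,n}(u,v)\mu_n(v)\to \mu_m(u)$, i.e. that in the decomposition $\mu_m(u)=\sum_{w} P_{m,n}(u,w)\mu_n(w)$ the contribution of all $w\neq v$ vanishes in the limit. For $w\notin I_\mu$ this follows from $\mu_n(w)\to 0$ combined with controlling the growth of $P_{m,n}(u,w)$ against $h_n$, but when $I_\mu$ contains several vertices one must argue that, for an ergodic measure, the mass splits among the vertices of $I_\mu$ in a way consistent with the single limit $\mu(\tau_m=u)$. I would handle this by appealing to the structure of clean diagrams from \cite{bkms}: within the support $I_\mu$ the column ratios $P_{m,n}(u,w)/h_n(w)$ converge to the coordinates of the invariant measure, and ergodicity forces these to assemble into the stated limit. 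The safest route is to prove the $w\notin I_\mu$ terms vanish (using part (1)-type estimates) and then invoke convergence of the normalized columns toward $\mu$ on $I_\mu$, so that the surviving sum telescopes to $\mu(\tau_m=u)$.
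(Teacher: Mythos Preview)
Your argument for part~(1) is correct and is exactly the paper's proof, rearranged: from $\mu_m(u)=\sum_w P_{m,n}(u,w)\mu_n(w)\ge P_{m,n}(u,v)\mu_n(v)$ and $h_n(v)\mu_n(v)=\mu(\tau_n=v)\ge\delta_0$ one gets
\[
\frac{h_m(u)}{h_n(v)}P_{m,n}(u,v)\le \frac{\mu(\tau_m=u)}{\delta_0},
\]
which tends to $0$ uniformly in $n>m$ since $u\notin I_\mu$.

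For part~(2), however, there is a genuine gap. Your plan is to show $P_{m,n}(u,v)\mu_n(v)\to\mu_m(u)$ by writing $\mu_m(u)=\sum_w P_{m,n}(u,w)\mu_n(w)$ and arguing that all mass concentrates on the single term $w=v$. The terms with $w\notin I_\mu$ do vanish (since $\frac{h_m(u)}{h_n(w)}P_{m,n}(u,w)\le 1$ and $\mu(\tau_n=w)\to0$), but when $I_\mu$ contains several vertices $v_1,\dots,v_k$ you only obtain
\[
\mu(\tau_m=u)=\lim_{n\to\infty}\sum_{j=1}^{k}\frac{h_m(u)}{h_n(v_j)}P_{m,n}(u,v_j)\,\mu(\tau_n=v_j),
\]
which does \emph{not} force each individual factor $\frac{h_m(u)}{h_n(v_j)}P_{m,n}(u,v_j)$ to converge to $\mu(\tau_m=u)$; the weights $\mu(\tau_n=v_j)$ need not converge, and there is no a priori reason the ratios should be asymptotically equal. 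Your proposed fix---invoking that ``the column ratios $P_{m,n}(u,w)/h_n(w)$ converge to the coordinates of the invariant measure''---is precisely the statement you are trying to prove, so the argument is circular. Nothing in the definition of a clean diagram (nor an obvious consequence of \cite{bkms} as quoted in the paper) gives this for free.

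The paper takes a completely different route for part~(2): it uses the pointwise ergodic theorem together with Egorov's theorem to produce, for each large $n$, a point $y^{(n)}\in B_n(v)$ whose backward Birkhoff averages of ${\bf 1}_{B_m(u)}$ over the tower of height $h_n(v)$ are close to $\mu_m(u)$. One then observes the combinatorial identity
\[
\frac{1}{h_n(v)}\sum_{k=0}^{h_n(v)-1}{\bf 1}_{B_m(u)}(T^{-k}y^{(n)})=\frac{P_{m,n}(u,v)}{h_n(v)},
\]
since traversing the tower $v$ downward visits $B_m(u)$ exactly $P_{m,n}(u,v)$ times. This yields $P_{m,n}(u,v)/h_n(v)\to\mu_m(u)$ directly, with no need to compare different vertices of $I_\mu$. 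The ergodic-theorem approach is what makes the case $\#I_\mu>1$ go through; your measure-decomposition approach, as written, does not.
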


\begin{proof}
(1) Recall $\delta_0 >0$ is the constant appearing in the definition of clean diagram, and take $u\not\in I_{\mu}$ and $v\in I_{\mu}$ (so $\mu(\tau_n=v) \geq \delta_0$ for every $n\geq 1$). To prove the statement it suffices to notice, using $\mu_m=P_{m,n}\mu_n$ for every $n>m\geq 1$, that the following inequality holds
\begin{align*}
\mu(\tau_m=u) &\geq \frac{h_m(u)}{h_n(v)}P_{m,n}(u,v)\mu(\tau_n=v) \geq \delta_0\frac{h_m(u)}{h_n(v)}P_{m,n}(u,v),
\end{align*}
and to make $m\to+\infty$.

Statement (2) can be proved using the same ideas in \cite[Lemma 11]{dfm}.
Recall that the measure of the set $B_m(u)$ is denoted by $\mu_m(u)$. Set $m\geq 1$, $u\in V_m$, $v\in I_\mu$ and $0< \varepsilon < \delta_0$. 
For $\mu$-almost every $x\in X$, the pointwise ergodic theorem and Egorov's theorem give us a set $A$ with $\mu(A)>1-\varepsilon$ and a positive integer $N_0$ such that for all $x\in A$ and $N\geq N_0$
\begin{equation}\label{eq:ergodico_egorov}
\abs{\frac{1}{N}\sum_{k=0}^{N-1} {\bf 1}_{B_m(u)}(T^{k}x) - \mu_m(u) }< \varepsilon.
\end{equation}

Take $n>m$ such that $h_n(v)>N_0$. We can find $j$, with $0\leq j\leq \left\lfloor\frac{\varepsilon h_n(v)}{\delta_0}\right\rfloor$, such that $A\cap T^{-h_n(v)-j+1}B_n(v)\neq\emptyset$. Indeed, since $\mu$ is invariant and $v\in I_{\mu}$ we have 
\begin{align*}
 &\mu\left(\bigcup_{j=0}^{\lfloor\varepsilon h_n(v)/\delta_0\rfloor}T^{-h_n(v)-j+1}B_n(v)\right) 
 = \left(\left\lfloor\frac{\varepsilon h_n(v)}{\delta_0}\right\rfloor + 1\right)\mu_n(v) > \frac{\varepsilon}{\delta_0}\mu(\tau_n=v)\geq \varepsilon.
\end{align*} 
\smallskip

Now, taking $x^{(n)}\in A\cap T^{-h_n(v)-j+1}B_n(v)$, relation \eqref{eq:ergodico_egorov} implies that
\begin{equation}\label{eq:ergodico_egorov_hn}
\abs{\frac{1}{h_n(v)+j}\!\!\!\!\sum_{k=0}^{h_n(v)+j-1}\!\!\!\!\! {\bf 1}_{B_m(u)}(T^{k}x^{(n)}) - \mu_m(u) } < \varepsilon.
\end{equation}
Let us write
\begin{align*}
&\frac{1}{h_n(v)+j}\!\!\!\!\sum_{k=0}^{h_n(v)+j-1}\!\!\!\!\!{\bf 1}_{B_m(u)}(T^{k}x^{(n)}) \\
= & \frac{1}{h_n(v)+j}\sum_{k=0}^{j-1}{\bf 1}_{B_m(u)}(T^{k}x^{(n)})  + \frac{1}{h_n(v)}\!\!\!\!\sum_{k=j}^{h_n(v)+j-1}\!\!\!\!\!{\bf 1}_{B_m(u)}(T^{k}x^{(n)}) \\
 &- \frac{j}{h_n(v)(h_n(v)+j)}\!\!\!\!\sum_{k=j}^{h_n(v)+j-1}\!\!\!\!\!{\bf 1}_{B_m(u)}(T^{k}x^{(n)}).
\end{align*}
Notice that the modulus of the first and third terms on the right side are each bounded by
$$
\frac{j}{h_n(v)+j} < \frac{\varepsilon}{\delta_0}.
$$
Combining this with \eqref{eq:ergodico_egorov_hn} we obtain
\begin{equation}\label{eq:ergodico_egorov_final}
\abs{\frac{1}{h_n(v)}\!\!\!\!\sum_{k=j}^{h_n(v)+j-1}\!\!\!\!\!{\bf 1}_{B_m(u)}(T^{k}x^{(n)}) - \mu_m(u)} < \varepsilon\!\left(1+\frac{2}{\delta_0}\right).
\end{equation}
If we define $y^{(n)}=T^{h_n(v)+j-1}x^{(n)}\in B_n(v)$ (notice that $y^{(n)}$ depends on $n$ and $v$), relation \eqref{eq:ergodico_egorov_final} leads to
\begin{equation}\label{eq:ergodico_egorov_convergencia}
\frac{P_{m,n}(u,v)}{h_n(v)}=\frac{1}{h_n(v)}\!\!\sum_{k=0}^{h_n(v)-1}\!\!\!{\bf 1}_{B_m(u)}(T^{-k}y^{(n)})
\xrightarrow[n\to+\infty]{} \mu_m(u).
\end{equation}
Multiplying both sides of \eqref{eq:ergodico_egorov_convergencia} by $h_m(u)$ gives statement (2). 
\end{proof}

We are ready to state the main result of the section. 

\begin{theo}
\label{theo:cns_sinrho}
Let $(X,T)$ be a minimal Cantor system given by a proper and clean Bratteli-Vershik representation $B=(V,E,\preceq)$ of finite rank $d$. Let  $\mu$ be an ergodic probability measure. 
Then, $\lambda = \exp(2 i \pi \alpha)$ is an eigenvalue of $(X,T)$ for $\mu$
if and only if one of the following two equivalent conditions hold:
\begin{itemize}
\item[(1)] For all $v\in I_{\mu}$,
$$
\sum_{u\in I_{\mu}}\frac{h_m(u)}{h_n(v)}\abs{\sum_{s\in \sufijocbb{m}{n}{u}{v}}\lambda^{\la s, h_m\ra}}\xrightarrow[m\to+\infty]{} 1
$$
uniformly for $n>m$.
\vskip 0.5cm

\item[(2)] For all $u\in\set{1,\ldots,d}$ and $v\in I_{\mu}$,
$$
\frac{h_m(u)}{h_n(v)}\left[\matrizp{m}{n}{u}{v}-\abs{\sum_{s\in \sufijocbb{m}{n}{u}{v}}\lambda^{\la s, h_m\ra}}\right]\xrightarrow[m\to+\infty]{} 0
$$
uniformly for $n>m$
\end{itemize}
\end{theo}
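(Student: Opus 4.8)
The plan is to treat the equivalence (1)$\Leftrightarrow$(2) as a purely combinatorial fact, and then to prove that (2) characterises the eigenvalue property, with the forward implication resting on the martingale $\EE(f\mid\T_n)$ and the converse on the abstract criterion of Theorem~\ref{theo:cns_old_rho}. Throughout write $\Sigma_{m,n}(u,v)=\sum_{s\in S_{m,n}(u,v)}\lambda^{\la s,h_m\ra}$; since $\lambda$ has modulus one, $|\Sigma_{m,n}(u,v)|$ is insensitive to the sign of the exponent.

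For (1)$\Leftrightarrow$(2), the key exact relation is $\sum_{u\in V_m}\frac{h_m(u)}{h_n(v)}P_{m,n}(u,v)=1$ for every $v\in V_n$, coming from $h_n=h_mP_{m,n}$, together with $|\Sigma_{m,n}(u,v)|\le P_{m,n}(u,v)$. Summing the nonnegative terms of (2) over the finitely many $u\in V_m$ gives $\sum_{u\in V_m}\frac{h_m(u)}{h_n(v)}|\Sigma_{m,n}(u,v)|\to 1$ uniformly in $n>m$; conversely Lemma~\ref{lemma:comportamiento_cuocientes}(1) yields $\sum_{u\notin I_\mu}\frac{h_m(u)}{h_n(v)}P_{m,n}(u,v)\to 0$ uniformly, so the sum over $I_\mu$ and the sum over all of $V_m$ share the same limit; a finite sum of uniformly vanishing nonnegative differences vanishes termwise, so (1) and (2) are interchangeable.

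The engine of both remaining implications is one identity. Fix $m<n$ and $u\in V_m$. By \eqref{eq:formulareturn_mn} the atoms of $\P_n$ sitting inside the base $B_m(u)$ are exactly the sets $T^{-\la s,h_m\ra}B_n(v)$ with $v\in V_n$, $s\in S_{m,n}(u,v)$, giving $B_m(u)=\bigsqcup_{v}\bigsqcup_{s\in S_{m,n}(u,v)}T^{-\la s,h_m\ra}B_n(v)$. Writing $\EE(f\mid\T_n)=c_n(\tau_n)\lambda^{-r_n-\rho_n(\tau_n)}$ as in \eqref{eq:definicion_c_ro}, integrating the tower identity $\EE(\EE(f\mid\T_n)\mid\T_m)=\EE(f\mid\T_m)$ over $B_m(u)$, multiplying by $h_m(u)$ and using $h_n(v)\mu_n(v)=\mu(\tau_n=v)$ produces
$$\mu(\tau_m=u)\,c_m(u)\lambda^{-\rho_m(u)}=\sum_{v\in V_n}c_n(v)\,\mu(\tau_n=v)\,\lambda^{-\rho_n(v)}\,\frac{h_m(u)}{h_n(v)}\,\overline{\Sigma_{m,n}(u,v)}.$$
Taking moduli, using $c_n\le 1$, $|\Sigma_{m,n}(u,v)|\le P_{m,n}(u,v)$, and the exact relation $\sum_v\mu(\tau_n=v)\frac{h_m(u)}{h_n(v)}P_{m,n}(u,v)=\mu(\tau_m=u)$ (which is just \eqref{eq:measure}), one sandwiches $\mu(\tau_m=u)c_m(u)\le\sum_v c_n(v)\mu(\tau_n=v)\frac{h_m(u)}{h_n(v)}|\Sigma_{m,n}(u,v)|\le\mu(\tau_m=u)$. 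Since Lemma~\ref{lemma:convergencia_c} gives $\mu(\tau_m=u)(1-c_m(u))\to 0$ (independently of $n$), the nonnegative sum $\sum_v\mu(\tau_n=v)\big[\tfrac{h_m(u)}{h_n(v)}P_{m,n}(u,v)-c_n(v)\tfrac{h_m(u)}{h_n(v)}|\Sigma_{m,n}(u,v)|\big]$ vanishes uniformly in $n>m$, hence termwise; restricting to $v\in I_\mu$, where $\mu(\tau_n=v)\ge\delta_0$ and $c_n(v)\to 1$, yields exactly (2). This settles eigenvalue $\Rightarrow$(2).

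For (2)$\Rightarrow$ eigenvalue I will invoke Theorem~\ref{theo:cns_old_rho} and construct $\rho_n$ making $\alpha(r_n+\rho_n\circ\tau_n)$ converge $(\mod\ZZ)$ $\mu$-a.e.\ along a subsequence. Condition (2) says precisely that for $v\in I_\mu$ the phases $\{\lambda^{\la s,h_m\ra}:s\in S_{m,n}(u,v)\}$ concentrate near the barycentre $\Sigma_{m,n}(u,v)/|\Sigma_{m,n}(u,v)|$, uniformly over $n>m$; I will define $\lambda^{-\rho_n(v)}$ as the limiting argument of these barycentres, telescoped from a fixed large base level via the suffix cocycle \eqref{eq:suma_sufijos}. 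Since $\mu(\tau_n\notin I_\mu)\to 0$, passing to a subsequence with $\sum_k\mu(\tau_{n_k}\notin I_\mu)<+\infty$ and Borel--Cantelli ensure that $\mu$-a.e.\ $x$ has $\tau_{n_k}(x)\in I_\mu$ eventually, so only controlled phases enter and the increments $\alpha(r_{n_{k+1}}-r_{n_k})+\alpha(\rho_{n_{k+1}}\circ\tau_{n_{k+1}}-\rho_{n_k}\circ\tau_{n_k})$ become summable. I expect the main obstacle to be the \emph{consistency} of this phase cocycle: the natural value of $\lambda^{-\rho_{n+1}(v)}$ depends a priori on the incoming vertex $u$ through which a trajectory reaches $v$, and I must exploit the uniform concentration in (2) \emph{for all} pairs $m<n$ (not merely consecutive levels) together with the additivity \eqref{eq:suma_sufijos} to show these candidate phases agree up to summable errors, while using cleanness and Lemma~\ref{lemma:comportamiento_cuocientes} to discard the negligible mass of trajectories passing through the wrong intermediate vertices.
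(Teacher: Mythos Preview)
Your treatment of (1)$\Leftrightarrow$(2) and of the implication ``eigenvalue $\Rightarrow$ (2)'' is correct and is essentially the paper's own argument: the same decomposition of $B_m(u)$, the same identity \eqref{eq:medidas_torres_mas_c_rho_lambda} (up to complex conjugation), and the same sandwich with Lemma~\ref{lemma:convergencia_c}. Your handling of the residual factor $c_n(v)$ is slightly different from the paper's (you keep it and remove it at the end using $c_n(v)\to 1$ for $v\in I_\mu$, whereas the paper drops it immediately via $c_n\le 1$), but this is a cosmetic difference.

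For the sufficiency direction ``(2) $\Rightarrow$ eigenvalue'' you have identified the correct strategy (build $\rho_n$ and appeal to Theorem~\ref{theo:cns_old_rho}) and the correct obstacle (the cocycle consistency of the candidate phases across different incoming vertices and different level pairs), but what you have written is a plan, not a proof. Two concrete pieces are missing. First, the passage from ``$|\Sigma_{m,n}(u,v)|$ is close to $P_{m,n}(u,v)$'' to ``most individual phases $\lambda^{\la s,h_m\ra}$ lie within a shrinking window of a common value'' is not automatic; the paper makes this quantitative via a \emph{Geometric Lemma} (Lemma~\ref{lemma:geometrico}): if $|\sum_{k=1}^N z_k|>(1-\varepsilon)N$ with $|z_k|=1$, then all but $O(N\varepsilon/(1-\cos 2\pi\gamma))$ of the $z_k$ lie within angle $\gamma$ of one of them. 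The choice of $\gamma=\gamma_{m,n}(u,v)$ as a suitable function of $\Delta_{m,n}(u,v)$, with the resulting exceptional set $\mathcal D_{m,n}$ of points whose suffix phase escapes this window, is what makes the later Borel--Cantelli argument work. Second, the consistency you flag (``quasi-additivity'') is proved in the paper (Lemma~\ref{lemma:cuasi_aditividad_Lambda}) by a measure-comparison argument: one shows that the set $\mathcal V_{\ell,m,n}$ of points whose suffix phases are well-behaved at both scales $(\ell,m)$ and $(m,n)$ has measure at least $\delta_0^3/16$, while the bad set $\mathcal D_{\ell,n}$ has measure strictly smaller; hence the three reference phases $\Lambda_{\ell,m},\Lambda_{m,n},\Lambda_{\ell,n}$ must be additively compatible up to $\gamma_{\ell,m}+\gamma_{m,n}+\gamma_{\ell,n}$. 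One further point: the paper defines $\rho_m(u)$ \emph{forward}, as $\lim_i\alpha\Lambda_{m,n_i}(u,v_0)$ for a fixed target $v_0\in I_\mu$ along a diagonal subsequence, rather than \emph{backward} from a fixed base level as you propose; both pictures are reconciled precisely by the quasi-additivity, but the forward definition makes the final Cauchy estimate (Lemma~\ref{lemma:convergencia_rho}) cleaner because the telescoping sum $\sum_i(\Theta_i+\Omega_i)$ collapses directly to $\alpha(r_{m_\ell}-r_{m_k})+\alpha\Lambda_{m_\ell,j}-\alpha\Lambda_{m_k,j}$.
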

The proof of the theorem has been divided into three parts.

\subsubsection{Proof that (1) and (2) are equivalent}
First, (2) implies (1) follows from $\sum_{u\in V_m} \frac{h_m(u)}{h_n(v)} \matrizp{m}{n}{u}{v}=1$ and part (1) of Lemma \ref{lemma:comportamiento_cuocientes}. 

To prove that (1) implies (2) we proceed by contradiction. Consider $\varepsilon >0$ 
and use (1) to get $m_0 \geq 1$ such that for all $n > m\geq m_0$ and all $v \in I_{\mu}$ 
\begin{align}\label{eq:blabla}
\sum_{u \in I_{\mu}} \frac{h_m(u)}{h_n(v)}\abs{\sum_{s\in \sufijocbb{m}{n}{u}{v}}\lambda^{\la s, h_m\ra}} > 1-\varepsilon \ . 
\end{align}
Now, assume that for some large $n>m$ and for some $u_0 \in V_m$ and $v_0 \in I_\mu$
we have
\begin{align}\label{eq:blabla1}
\frac{h_m(u_0)}{h_n(v_0)}\left[\matrizp{m}{n}{u_0}{v_0}-\abs{\sum_{s\in \sufijocbb{m}{n}{u_0}{v_0}}\lambda^{\la s, h_m\ra}}\right] \geq \varepsilon \ . 
\end{align}
Then, 
{\small
\begin{align*}
1 &= \sum_{u \in V_m} \frac{h_m(u)}{h_n(v_0)} \matrizp{m}{n}{u}{v_0} \\
  &= \sum_{u \in V_m} \frac{h_m(u)}{h_n(v_0)} \left (\matrizp{m}{n}{u}{v_0} -\abs{\sum_{s\in \sufijocbb{m}{n}{u}{v_0}}\lambda^{\la s, h_m\ra}}\right) +  \sum_{u \in V_m} \frac{h_m(u)}{h_n(v_0)}\abs{\sum_{s\in \sufijocbb{m}{n}{u}{v_0}}\lambda^{\la s, h_m\ra}} \\
  &\geq \varepsilon + \sum_{u \in V_m} \frac{h_m(u)}{h_n(v_0)}\abs{\sum_{s\in \sufijocbb{m}{n}{u}{v_0}}\lambda^{\la s, h_m\ra}} \\
  &\geq \varepsilon + \sum_{u \in I_\mu } \frac{h_m(u)}{h_n(v_0)}\abs{\sum_{s\in \sufijocbb{m}{n}{u}{v_0}}\lambda^{\la s, h_m\ra}} > 1 \ , \\
\end{align*}
}
where in the first equality we have used \eqref{eq:blabla1} and the fact that for all $u \in V_m$  
$$\abs{\sum_{s\in \sufijocbb{m}{n}{u}{v_0}}\lambda^{\la s, h_m\ra}} \leq \matrizp{m}{n}{u}{v_0},$$ and in the last inequality we have used \eqref{eq:blabla}.
This is a contradiction and (2) follows. 

\subsubsection{Proof of the necessity of the conditions}
We start noticing that $B_{m}(u)$ for $m\geq 1$ and $u \in V_m$ can be written as a disjoint union of elements of $\mathcal{P}_n$ for $n>m$ in the following way
\begin{equation}\label{eq:union_disjunta}
B_{m}(u) = \bigcup_{v\in V_n} \bigcup_{s\in \sufijocbb{m}{n}{u}{v}} T^{-\la s,h_m\ra}B_n(v).
\end{equation}

Applying $\mu$ on both sides of \eqref{eq:union_disjunta} and introducing heights to get measures of towers we have
\begin{equation}\label{eq:medidas_torres}
\mu(\tau_m=u)=\sum_{v \in V_n}\frac{h_m(u)}{h_n(v)}P_{m,n}(u,v)\mu(\tau_n=v).
\end{equation}

On the other side, we integrate a fixed eigenfunction $f$ of modulus $1$ associated to $\lambda$ over $B_m(u)$. 
We use equality \eqref{eq:union_disjunta} to obtain
\begin{align}
\int_{B_m(u)}fd\mu &= \sum_{v \in V_n}\sum_{s\in \sufijocbb{m}{n}{u}{v}}\int_{B_n(v)}f\circ T^{-\la s,h_m\ra}d\mu \nonumber \\
  &= \sum_{v\in V_n}\left(\sum_{s\in \sufijocbb{m}{n}{u}{v}}\!\!\!\lambda^{-\la s,h_m\ra}\right)\int_{B_n(v)}f d\mu \ ,  \label{eq:integrando_funcionpropia}
\end{align}

and then, applying \eqref{eq:definicion_c_ro} on \eqref{eq:integrando_funcionpropia} and multiplying both sides by $h_m(u)$ we get
\begin{align}
& \mu(\tau_m=u)c_m(u)\lambda^{-\rho_m(u)} \nonumber \\
&\qquad\qquad = \sum_{v\in V_n}\frac{h_m(u)}{h_n(v)}\left(\sum_{s\in \sufijocbb{m}{n}{u}{v}}\!\!\!\lambda^{-\la s,h_m\ra}\right)
\mu(\tau_n=v)c_n(v)\lambda^{-\rho_n(v)}. \label{eq:medidas_torres_mas_c_rho_lambda}
\end{align}

With these two very similar equations, \eqref{eq:medidas_torres} and \eqref{eq:medidas_torres_mas_c_rho_lambda}, we can conclude the ``{}necessity part''{} of the proof in the following way.
First, we have the inequalities
\begin{equation}
|c_n (v) |\leq 1 \hbox{ and } \bigg|\!\sum_{s\in \sufijocbb{m}{n}{u}{v}}\!\!\!\lambda^{-\la s,h_m\ra}\bigg|\leq P_{m,n}(u,v),
\end{equation}
so we take the absolute value on both sides of \eqref{eq:medidas_torres_mas_c_rho_lambda} to obtain
\begin{align*}
\mu(\tau_m=u)c_m(u) &\leq \sum_{v\in V_n}\frac{h_m(u)}{h_n(v)}\bigg|\!\sum_{s\in \sufijocbb{m}{n}{u}{v}}\!\!\!\lambda^{-\la s,h_m\ra}\bigg|\mu(\tau_n=v) \\
  &\leq \sum_{v \in V_n}\frac{h_m(u)}{h_n(v)}P_{m,n}(u,v)\mu(\tau_n=v) \\
  &= \mu(\tau_m=u).
\end{align*}
Then, applying Lemma \ref{lemma:convergencia_c} we see that
\begin{equation}
\sum_{v \in V_n}\frac{h_m(u)}{h_n(v)}\left[P_{m,n}(u,v)-\bigg|\!\sum_{s\in \sufijocbb{m}{n}{u}{v}}\!\!\!\lambda^{-\la s,h_m\ra}\bigg|\right]\mu(\tau_n=v)\xrightarrow[m\to+\infty]{}0
\end{equation}
uniformly for $n>m$.

Finally, for any $v\in I_{\mu}$ we get
$$
\frac{h_m(u)}{h_n(v)}\left[P_{m,n}(u,v)-\bigg|\!\sum_{s\in \sufijocbb{m}{n}{u}{v}}\!\!\!\lambda^{-\la s,h_m\ra}\bigg|\right]\xrightarrow[m\to+\infty]{}0
$$
uniformly for $n>m$, which is condition (2) of Theorem \ref{theo:cns_sinrho} (recall that $u$ is an arbitrary vertex in $V_m$).

\subsubsection{Proof of the sufficiency of the conditions}
Now we assume $$\frac{h_m(u)}{h_n(v)}\left[\matrizp{m}{n}{u}{v}-\bigg|\!\!\sum_{s\in\sufijocbb{m}{n}{u}{v}}\!\!\!\lambda^{\la s,h_m\ra}\bigg|\right] \xrightarrow[m\to+\infty]{}0$$ uniformly in $n>m\geq 1$ for $u \in V_m$ and $v \in I_\mu$.

We start with the following lemma, which will allow us to handle the sum of powers of $\lambda$ that appear in the (equivalent) conditions of Theorem \ref{theo:cns_sinrho}.

\begin{lemma}[Geometric Lemma]\label{lemma:geometrico}
For $N> 1$ and $k=1,\ldots,N$ consider complex numbers $z_k=\exp(2i \pi \alpha_k)$ with $\alpha_k\in [0,1)$. Let $\varepsilon\leq 1$ and $\gamma\leq 1/2$ be positive real numbers.
If $\ds \Big |\sum_{k=1}^{N}z_k\Big |>(1-\varepsilon)N$, then there exists $1\leq \ell\leq N$ such that
$$
\#\set{\alpha_k \talque 1\leq k\leq N \y \vvert \alpha_k - \alpha_{\ell} \vvert \geq \gamma} <\frac{2N\varepsilon}{1-\cos(2\pi\gamma)}.
$$
\end{lemma}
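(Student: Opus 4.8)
The plan is to prove the Geometric Lemma by a direct averaging/counting argument.  I would first reduce to understanding what the hypothesis $\bigl|\sum_{k=1}^N z_k\bigr| > (1-\varepsilon)N$ says about the clustering of the phases $\alpha_k$.  The natural approach is to rotate all the points so that their sum lies on the positive real axis: writing $\sum_{k=1}^N z_k = R e^{2i\pi\beta}$ with $R > (1-\varepsilon)N$, I set $\theta_k = \alpha_k - \beta$, so that $\sum_k \cos(2\pi\theta_k) = R > (1-\varepsilon)N$ (the imaginary parts summing to zero).  This immediately gives $\sum_{k=1}^N \bigl(1 - \cos(2\pi\theta_k)\bigr) < \varepsilon N$.

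Next I would extract a good center $\ell$.  Since the average of $1 - \cos(2\pi\theta_k)$ over $k$ is strictly less than $\varepsilon$, there exists some index $\ell$ with $1 - \cos(2\pi\theta_\ell) < \varepsilon$, i.e.\ the phase $\alpha_\ell$ is close to the argument $\beta$ of the sum.  My aim is to show that most other $\alpha_k$ are close to this $\alpha_\ell$.  The key estimate I would use is that for any $k$,
\begin{equation*}
1 - \cos\bigl(2\pi(\alpha_k - \alpha_\ell)\bigr) \leq 2\bigl(1 - \cos(2\pi\theta_k)\bigr) + 2\bigl(1 - \cos(2\pi\theta_\ell)\bigr),
\end{equation*}
which follows from the subadditivity-type inequality for $1-\cos$ (provable via the product-to-sum identities, bounding the cross term by the two individual defects).

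Finally I would run the counting argument.  Let $N_\gamma = \#\{k : \vvert \alpha_k - \alpha_\ell \vvert \geq \gamma\}$.  For each such $k$, since $1 - \cos$ is increasing on $[0,\pi]$ and $\vvert \alpha_k - \alpha_\ell \vvert \geq \gamma$, we have $1 - \cos\bigl(2\pi(\alpha_k - \alpha_\ell)\bigr) \geq 1 - \cos(2\pi\gamma)$.  Combining this lower bound with the displayed inequality and then summing over all $k$, the total $\sum_k \bigl(1-\cos(2\pi(\alpha_k-\alpha_\ell))\bigr)$ is controlled by $4\sum_k(1-\cos(2\pi\theta_k)) < 4\varepsilon N$ (using that the defect at $\ell$ is itself bounded by the full sum).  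This yields $N_\gamma \bigl(1 - \cos(2\pi\gamma)\bigr) \leq 4\varepsilon N$, hence the claimed bound $N_\gamma < \tfrac{2N\varepsilon}{1-\cos(2\pi\gamma)}$ after tracking the constant carefully.

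The main obstacle I anticipate is getting the constant exactly right: the clean statement has $2$ rather than $4$ in the numerator, so the naive two-term bound above is slightly lossy.  To recover the sharp constant I would instead work directly with $\cos(2\pi(\alpha_k-\alpha_\ell)) = \cos(2\pi\theta_k)\cos(2\pi\theta_\ell) + \sin(2\pi\theta_k)\sin(2\pi\theta_\ell)$ and estimate the defect more carefully, exploiting that $\theta_\ell$ can be chosen so its defect is genuinely below the \emph{average} $\varepsilon$ (strict inequality), rather than merely below the total.  The delicate point is balancing the contribution of the center's own defect against the others so that only a factor $2$ survives; this is where I would spend the most care, and where a slightly cleverer choice of $\ell$ (e.g.\ the one minimizing $1-\cos(2\pi\theta_\ell)$) is likely needed.
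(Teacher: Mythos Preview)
Your center-based approach is natural but, as you correctly diagnose, loses a factor of $2$ in the constant, and the fix you sketch does not close the gap. The problem is structural: once you commit to a single reference point (the argument $\beta$ of the sum, or the $\ell$ of minimal defect), the triangle-inequality step $1-\cos(2\pi(\theta_k-\theta_\ell)) \leq 2(1-\cos(2\pi\theta_k)) + 2(1-\cos(2\pi\theta_\ell))$ is essentially sharp, and the defect at $\ell$ can genuinely be of order $\varepsilon$ (take all $\theta_k$ equal), so the second term cannot be made negligible. No clever choice of a single $\ell$ will recover the constant $2$.

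The paper avoids this by never fixing a center. It expands
\[
|S|^2 = N + \sum_{i\neq j}\cos\bigl(2\pi\vvert\alpha_i-\alpha_j\vvert\bigr),
\]
so the hypothesis $|S|>(1-\varepsilon)N$ yields directly $\sum_{i\neq j}\bigl(1-\cos(2\pi\vvert\alpha_i-\alpha_j\vvert)\bigr) < 2N^2\varepsilon$. Defining the set $K$ of ordered pairs $(i,j)$ with $\vvert\alpha_i-\alpha_j\vvert\geq\gamma$, each such pair contributes at least $1-\cos(2\pi\gamma)$ to the sum, giving $\#K < \dfrac{2N^2\varepsilon}{1-\cos(2\pi\gamma)}$. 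A pigeonhole over the $N$ possible values of the first index then produces some $\ell$ with fewer than $\dfrac{2N\varepsilon}{1-\cos(2\pi\gamma)}$ bad partners. In effect the paper averages your estimate over \emph{all} potential centers $\ell$ simultaneously, which is exactly what eliminates the extra factor of $2$; your proposed refinement would have to become this argument to succeed.
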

\begin{proof}

Set $S=\sum_{k=1}^{N}z_k$. It is easy to check that $S$ satisfies
$$
|S|^{2} = N + 2\!\!\!\sum_{1\leq i<j\leq N}\!\!\cos(2\pi\vvert \alpha_i - \alpha_j\vvert).
$$
Then, from $|S|>(1-\varepsilon)N $ one gets $1-|S|^{2}/N^{2} < 2\varepsilon $, and thus 
\begin{align}
2\!\!\!\sum_{1\leq i<j\leq N}\!\!\cos(2\pi\vvert \alpha_i - \alpha_j\vvert) > N(N-1) -2N^{2}\varepsilon. \label{eq:geom1}
\end{align}
On the other hand, if we define
$$
K=\set{(\alpha_i, \alpha_j) \talque 1\leq i,j\leq N \y \vvert \alpha_i - \alpha_j \vvert \geq \gamma},
$$
then
\begin{align}
& 2\!\!\!\sum_{1\leq i<j\leq N}\!\!\cos(2\pi\vvert \alpha_i - \alpha_j\vvert) \nonumber \\
=&\sum_{1\leq i\neq j\leq N}\!\!\cos(2\pi\vvert \alpha_i - \alpha_j\vvert) \nonumber \\
=& \sum_{\substack{(\alpha_i,\alpha_j)\in K \\ 1\leq i,j\leq N}}\!\!\cos(2\pi\vvert \alpha_i - \alpha_j\vvert) + \sum_{\substack{(\alpha_i,\alpha_j)\not\in K \\ 1\leq i\neq j\leq N}}\!\!\cos(2\pi\vvert \alpha_i - \alpha_j\vvert) \nonumber \\
\leq &\# K\cos(2\pi\gamma) + N(N-1) - \#K.
\end{align}

Combining this last inequality with \eqref{eq:geom1} we deduce that
$$
\# K < \frac{2N^{2}\varepsilon}{1-\cos(2\pi\gamma)}.
$$
So there should exist $1\leq \ell\leq N$ such that
$$
\#\set{\alpha_k \talque 1\leq k\leq N \y \vvert \alpha_k - \alpha_{\ell} \vvert \geq \gamma} <\frac{2N\varepsilon}{1-\cos(2\pi\gamma)}.
$$
\end{proof}

Notice that the condition of Theorem \ref{theo:cns_sinrho} is invariant by telescoping. 
Then, without loss of generality from now on we will make, by telescoping the associated Bratteli-Vershik diagram if necessary, the following assumptions: 
\begin{itemize}
\item[(1)] For all $u\not\in I_{\mu}$,
\begin{equation}\label{eq:torres_despreciables_medidas_sumables}
\sum_{n \geq 1} \mu(\tau_n=u) < +\infty
\end{equation}
(not only $\mu(\tau_n=u)\xrightarrow[n\to+\infty]{}0$ as in the definition of clean diagram).
\item[(2)] For all $n>m\geq 1$ and $u,v\in I_{\mu}$,
\begin{equation}\label{eq:cota_inf_cuocientes}
\frac{h_m(u)}{h_n(v)}\matrizp{m}{n}{u}{v} > \frac{\delta_0}{2},
\end{equation} 
where $\delta_0$ is the constant from the definition of clean diagram. It is clear that part (2) of Lemma \ref{lemma:comportamiento_cuocientes} allows us to assume this fact.
\end{itemize}

The proof of the ``{}sufficiency part''{} of Theorem \ref{theo:cns_sinrho} consists in constructing functions $\rho_n:V_n\to \RR$, with $n\geq 1$, as in the formulation of Theorem \ref{theo:cns_old_rho}, and proving the convergence associated with them. In order to accomplish this, we will break the proof below into several steps. 
\medskip

The first step consists in constructing with the help of Lemma \ref{lemma:geometrico} some useful sequences $(\Lambda_{m,n}(u,v) \talque n>m\geq 1 \y u,v\in I_{\mu})$ and $(\mathcal{D}_{m,n} \talque n>m\geq 1)$ of integers and measurable sets respectively, and show some relevant properties.

For $n>m\geq 1$ and $u,v\in I_{\mu}$ write
\begin{equation}\label{eq:def_Delta}
\Delta_{m,n}(u,v) = \frac{h_m(u)}{h_n(v)}\left[\matrizp{m}{n}{u}{v}-\bigg|\!\!\sum_{s\in\sufijocbb{m}{n}{u}{v}}\!\!\!\lambda^{\la s,h_m\ra}\bigg|\right].
\end{equation}

Using \eqref{eq:cota_inf_cuocientes} we can see that
$$
\Delta_{m,n}(u,v) > \frac{\delta_0}{2}\left[1-\frac{1}{\matrizp{m}{n}{u}{v}}\bigg|\!\!\sum_{s\in\sufijocbb{m}{n}{u}{v}}\!\!\!\lambda^{\la s,h_m\ra}\bigg|\right],
$$
and therefore
\begin{equation}\label{eq:desigualdad_lambda_Delta_P}
\bigg|\!\!\sum_{s\in\sufijocbb{m}{n}{u}{v}}\!\!\!\lambda^{\la s,h_m\ra}\bigg|>\left(1-\frac{2\Delta_{m,n}(u,v)}{\delta_0}\right)\!\matrizp{m}{n}{u}{v}.
\end{equation}

Now, consider
\begin{equation}\label{eq:def_gamma_mn}
\gamma=\gamma_{m,n}(u,v) = \frac{1}{2\pi}\arccos\left(1-\sqrt{\Delta_{m,n}(u,v)}\right),
\ \varepsilon=\varepsilon_{m,n}(u,v) =\frac{2\Delta_{m,n}(u,v)}{\delta_0}.
\end{equation}
Notice that with this choice of $\gamma$ and $\varepsilon$, if we take  large enough values of $n > m \geq 1$ and we use inequality \eqref{eq:desigualdad_lambda_Delta_P}, then the hypotheses of Lemma \ref{lemma:geometrico} hold for the complex numbers $\lambda^{\la s,h_m\ra}$ (recall that we are assuming $\Delta_{m,n}(u,v)\xrightarrow[m\to+\infty]{} 0$ uniformly for $n>m\geq 1$, for all $u,v\in I_{\mu}$). We deduce that there exists 
$s^*_{m,n}(u,v) \in\sufijocbb{m}{n}{u}{v}$ such that
\begin{align}
& \#\set{s\in\sufijocbb{m}{n}{u}{v} \talque \vvert\alpha\la s,h_m \ra - \alpha\la s^*_{m,n}(u,v),h_m \ra\vvert \geq \gamma_{m,n}(u,v)} \nonumber \\
< &  \frac{4\matrizp{m}{n}{u}{v}}{\delta_0}\sqrt{\Delta_{m,n}(u,v)}. \label{eq:lema_geom_aplicado}
\end{align}

It is important to remark that $s^*_{m,n}(u,v)$ is chosen arbitrarily for ``{}not large enough''{} values of $n>m \ge 1$.
\smallskip

For $n>m\geq 1$ and $u,v\in I_{\mu}$, write $\Lambda_{m,n}(u,v)=\la s^*_{m,n}(u,v),h_m\ra$ and define $\mathcal{D}_{m,n}$ as the set of points $x \in X$ such that 
$\tau_m (x), \tau_n(x)\in I_\mu$ and
$$
\vvert\alpha\la s_{m,n}(x),h_m\ra - \alpha\Lambda_{m,n}(\tau_m(x),\tau_n(x))\vvert \geq \gamma_{m,n}(\tau_m(x),\tau_n(x)).
$$

\begin{lemma}\label{lemma:mu_D_converge_a_cero}
$\ds\mu(\mathcal{D}_{m,n})\xrightarrow[m\to+\infty]{}0$ uniformly for $n>m\geq 1$.
\end{lemma}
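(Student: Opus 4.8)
The plan is to compute $\mu(\mathcal{D}_{m,n})$ exactly as a weighted count of ``bad'' suffix vectors and then bound that count through the Geometric Lemma. \emph{First}, I would establish the key measure identity: for fixed $u\in V_m$, $v\in V_n$ and $s\in \sufijocbb{m}{n}{u}{v}$, the fibre
$$
\{x\in X : \tau_m(x)=u,\ \tau_n(x)=v,\ s_{m,n}(x)=s\}=\bigsqcup_{j=0}^{h_m(u)-1} T^{-(j+\la s,h_m\ra)}B_n(v)
$$
has measure $h_m(u)\,\mu_n(v)$, a value independent of $s$. This follows from the disjoint decomposition \eqref{eq:union_disjunta} of $B_m(u)$ applied to $T^{r_m(x)}x$, combined with $r_n(x)=r_m(x)+\la s_{m,n}(x),h_m\ra$ from \eqref{eq:formulareturn_mn}: a point of the fibre sits at level $j=r_m(x)$ of tower $u$, hence at level $j+\la s,h_m\ra$ of tower $v$, and as $j$ ranges over $0,\dots,h_m(u)-1$ these are distinct sub-levels of the level-$n$ tower over $v$, each a translate of $B_n(v)$.

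\emph{Second}, since $\mathcal{D}_{m,n}$ is the union over $u,v\in I_\mu$ of those fibres whose suffix $s$ satisfies $\vvert \alpha\la s,h_m\ra-\alpha\Lambda_{m,n}(u,v)\vvert\ge \gamma_{m,n}(u,v)$, and $\Lambda_{m,n}(u,v)=\la s_0,h_m\ra$, the identity above yields
$$
\mu(\mathcal{D}_{m,n})=\sum_{u,v\in I_\mu}\#\set{s\in\sufijocbb{m}{n}{u}{v}\talque \vvert \alpha\la s,h_m\ra-\alpha\la s_0,h_m\ra\vvert\ge \gamma_{m,n}(u,v)}\,h_m(u)\mu_n(v).
$$
For all $m$ large enough (uniformly in $n>m$, because by hypothesis $\Delta_{m,n}(u,v)\to 0$ uniformly) the hypotheses of Lemma \ref{lemma:geometrico} hold, so each cardinality is bounded by $\tfrac{4}{\delta_0}\matrizp{m}{n}{u}{v}\sqrt{\Delta_{m,n}(u,v)}$ via \eqref{eq:lema_geom_aplicado}.

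\emph{Third}, using $\matrizp{m}{n}{u}{v}\,h_m(u)\mu_n(v)=\tfrac{h_m(u)}{h_n(v)}\matrizp{m}{n}{u}{v}\,\mu(\tau_n=v)$ together with the decomposition \eqref{eq:medidas_torres}, which gives $\tfrac{h_m(u)}{h_n(v)}\matrizp{m}{n}{u}{v}\,\mu(\tau_n=v)\le \mu(\tau_m=u)\le 1$, I obtain
$$
\mu(\mathcal{D}_{m,n})<\frac{4}{\delta_0}\sum_{u,v\in I_\mu}\sqrt{\Delta_{m,n}(u,v)}.
$$
Since $I_\mu$ is finite and $\Delta_{m,n}(u,v)\to 0$ as $m\to+\infty$ uniformly in $n>m$ (the standing sufficiency hypothesis), this finite sum of square roots tends to $0$ uniformly in $n>m$, proving the lemma.

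The one point needing care is the disjointness in the measure identity, i.e.\ that $j+\la s,h_m\ra<h_n(v)$ for every $0\le j<h_m(u)$; this holds because each such index equals $r_n(y)$ for a genuine point $y$ of the fibre, hence is a legitimate position strictly below $h_n(v)$. Everything else is bookkeeping plus the direct invocation of \eqref{eq:lema_geom_aplicado}.
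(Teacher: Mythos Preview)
Your proof is correct and follows essentially the same route as the paper: decompose $\mathcal{D}_{m,n}$ over pairs $(u,v)\in I_\mu\times I_\mu$, use that each fibre $\{\tau_m=u,\tau_n=v,s_{m,n}=s\}$ has measure $h_m(u)\mu_n(v)$, bound the number of bad suffixes via \eqref{eq:lema_geom_aplicado}, and then use $\tfrac{h_m(u)}{h_n(v)}\matrizp{m}{n}{u}{v}\le 1$. The only cosmetic difference is that the paper retains the factor $\mu(\tau_n=v)$ in its final estimate \eqref{eq:cota_Dmn}, whereas you bound it by $1$; this makes no difference for the present lemma, though the paper's slightly sharper form is reused later in Lemma~\ref{lemma:medida_C_es_nula}.
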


\begin{proof}
For $n>m\geq 1 $ the measure of $\mathcal{D}_{m,n}$ can be written
$$
\sum_{u\in I_{\mu}}\sum_{v\in I_{\mu}}\mu\{x\in X; \vvert\alpha\la s_{m,n}(x), h_m \ra - \alpha\Lambda_{m,n}(u,v)\vvert
\geq \gamma_{m,n}(u,v), \tau_m(x) = u, \tau_n(x)=v\}.
$$
Assuming $m$ is large enough, from inequality \eqref{eq:lema_geom_aplicado} we obtain
\begin{align}
\mu(\mathcal{D}_{m,n})  &< \sum_{u\in I_{\mu}}\sum_{v\in I_{\mu}}\frac{4\matrizp{m}{n}{u}{v}}{\delta_0}\sqrt{\Delta_{m,n}(u,v)}\mu_{n}(v)h_m(u) \nonumber\\
 &= \sum_{u\in I_{\mu}}\sum_{v\in I_{\mu}}\frac{4h_m(u)\matrizp{m}{n}{u}{v}}{\delta_0 h_n(v)}\sqrt{\Delta_{m,n}(u,v)}\mu(\tau_n=v) \nonumber \\
 &\label{eq:cota_Dmn} \leq \sum_{u\in I_{\mu}}\sum_{v\in I_{\mu}}\frac{4\sqrt{\Delta_{m,n}(u,v)}}{\delta_0}\mu(\tau_n=v).
\end{align}
The lemma follows since, by hypothesis, the right hand side goes to zero as desired.
\end{proof}

\begin{lemma}[Quasi-Additivity of $\alpha\Lambda_{m,n}(u,v)$]\label{lemma:cuasi_aditividad_Lambda}
For $n>m>\ell \geq 1$ large enough and $u,v,w \in I_{\mu}$ we have
\begin{align*}
\vvert\alpha\Lambda_{\ell,m}(u,v) + \alpha\Lambda_{m,n}(v,w) - \alpha\Lambda_{\ell,n}(u,w)\vvert   < \gamma_{\ell,m}(u,v) + \gamma_{m,n}(v,w) + \gamma_{\ell,n}(u,w).
\end{align*}
\end{lemma}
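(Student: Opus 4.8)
The plan is to reduce the statement to the existence of a single well-chosen point, exploiting the exact additivity of suffix vectors along a path. Recall from \eqref{eq:suma_sufijos} that for every $x\in X$ and $\ell<m<n$ one has $\la s_{\ell,n}(x),h_\ell\ra = \la s_{\ell,m}(x),h_\ell\ra + \la s_{m,n}(x),h_m\ra$, an identity that holds exactly (not merely $\bmod\ \ZZ$). Since each quantity $\Lambda_{\ell,m}(u,v)$, $\Lambda_{m,n}(v,w)$, $\Lambda_{\ell,n}(u,w)$ is by construction the value $\la s_0,h\ra$ of a \emph{representative} suffix vector, and since outside the sets $\mathcal{D}$ the genuine value $\alpha\la s_{\cdot,\cdot}(x),h_\cdot\ra$ lies within the corresponding $\gamma$ of $\alpha\Lambda$, I would first look for one point $x$ with $\tau_\ell(x)=u$, $\tau_m(x)=v$, $\tau_n(x)=w$ that avoids all three bad sets $\mathcal{D}_{\ell,m}$, $\mathcal{D}_{m,n}$, $\mathcal{D}_{\ell,n}$. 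Writing $A=\alpha\la s_{\ell,m}(x),h_\ell\ra$, $B=\alpha\la s_{m,n}(x),h_m\ra$, $C=\alpha\la s_{\ell,n}(x),h_\ell\ra$, the additivity gives $A+B-C=0$, so that $\alpha\Lambda_{\ell,m}(u,v)+\alpha\Lambda_{m,n}(v,w)-\alpha\Lambda_{\ell,n}(u,w)=[\alpha\Lambda_{\ell,m}(u,v)-A]+[\alpha\Lambda_{m,n}(v,w)-B]-[\alpha\Lambda_{\ell,n}(u,w)-C]$; applying the triangle inequality for $\vvert\cdot\vvert$ together with the three defining estimates of $x\notin\mathcal{D}$ yields exactly the claimed bound.

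The crux is the existence of such a point, i.e. that $\{\tau_\ell=u,\tau_m=v,\tau_n=w\}$ is not covered by $\mathcal{D}_{\ell,m}\cup\mathcal{D}_{m,n}\cup\mathcal{D}_{\ell,n}$. First I would compute the measure of the triple tower by counting the floors of the $n$-tower over $w$ that pass through vertex $v$ at level $m$ and vertex $u$ at level $\ell$, which gives $\mu(\tau_\ell=u,\tau_m=v,\tau_n=w)=h_\ell(u)\matrizp{\ell}{m}{u}{v}\matrizp{m}{n}{v}{w}\mu_n(w)$. Rewriting this as
$$\left(\frac{h_\ell(u)\matrizp{\ell}{m}{u}{v}}{h_m(v)}\right)\left(\frac{h_m(v)\matrizp{m}{n}{v}{w}}{h_n(w)}\right)\mu(\tau_n=w),$$
and using that $u,v,w\in I_{\mu}$, the two bracketed factors are each $>\delta_0/2$ by \eqref{eq:cota_inf_cuocientes} and $\mu(\tau_n=w)\geq\delta_0$; hence the triple tower has measure $>\delta_0^3/4$, a bound independent of $\ell,m,n$.

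On the other hand, Lemma \ref{lemma:mu_D_converge_a_cero} gives that $\mu(\mathcal{D}_{\ell,m})$, $\mu(\mathcal{D}_{m,n})$, $\mu(\mathcal{D}_{\ell,n})$ tend to $0$ uniformly as their smaller index grows; since $\ell<m<n$, taking $\ell$ large enough forces the sum of these three measures below $\delta_0^3/4$. Consequently the good part of the triple tower has positive measure, a suitable $x$ exists, and the argument of the first paragraph applies. The hard part will be the bookkeeping of the floor count for the triple tower and checking that \eqref{eq:cota_inf_cuocientes} legitimately bounds both the $(\ell,m)$ and the $(m,n)$ factors, which is precisely where all three vertices lying in $I_{\mu}$ is used; the final estimate is then a routine application of the subadditivity of the distance to the nearest integer.
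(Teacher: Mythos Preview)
Your proposal is correct and follows essentially the same approach as the paper: both locate a point in the triple tower $\{\tau_\ell=u,\tau_m=v,\tau_n=w\}$ that avoids the bad sets, then invoke the exact additivity $\la s_{\ell,n},h_\ell\ra=\la s_{\ell,m},h_\ell\ra+\la s_{m,n},h_m\ra$ together with the triangle inequality for $\vvert\cdot\vvert$. The paper frames this as a contradiction (first building $\mathcal{V}_{\ell,m,n}$ by avoiding $\mathcal{D}_{\ell,m}$ and $\mathcal{D}_{m,n}$ via the good-suffix cardinality bound, then showing $\mathcal{V}_{\ell,m,n}\subseteq\mathcal{D}_{\ell,n}$ if the inequality fails), whereas you subtract all three $\mathcal{D}$-measures from the full triple-tower measure $>\delta_0^3/4$ and argue directly---a slightly cleaner bookkeeping that bypasses the intermediate cardinality estimate.
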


\begin{proof}
Fix $u,v,w \in I_{\mu}$. For $1 \leq \ell < m <n$ write
\begin{align*}
\mathcal{V}_{\ell,m,n} = & \set{x\in X; \tau_{\ell}(x)=u, \tau_{m}(x)=v, \tau_{n}(x)=w} \\
&\cap \set{x\in X;\vvert\alpha\la s_{\ell, m}(x), h_{\ell}\ra - \alpha\Lambda_{\ell,m}(u,v)\vvert < \gamma_{\ell,m}(u,v)} \\
&\cap \set{x\in X; \vvert\alpha\la s_{m, n}(x), h_{m}\ra - \alpha\Lambda_{m, n}(v,w)\vvert < \gamma_{m,n}(v,w)}. \\
\end{align*}
First, we estimate the measure of this set. Since the maps $s_{\ell, m}(\cdot)$ and $s_{m, n}(\cdot)$ only depend on levels from $\ell$ to $m$ and $m$ to $n$ respectively,  we can see from the structure of the measure $\mu$ that $\mu(\mathcal{V}_{\ell,m,n})$ is equal to
\begin{align*}
 &\#\set{s\in\sufijocbb{\ell}{m}{u}{v} \talque \vvert\alpha\la s, h_{\ell}\ra - \alpha\Lambda_{\ell,m}(u,v)\vvert < \gamma_{\ell,m}(u,v)} \\
 &\qquad\times \#\set{s\in\sufijocbb{m}{n}{v}{w} \talque \vvert\alpha\la s, h_{m}\ra - \alpha\Lambda_{m,n}(v,w)\vvert < \gamma_{m, n}(v,w)} \\
 &\qquad\times\mu_n(w)h_{\ell}(u).
\end{align*}
For $1\leq \ell<m<n$ large enough, inequality \eqref{eq:lema_geom_aplicado} shows that the two set cardinalities involved in the above expression can be bounded below by $\matrizp{\ell}{m}{u}{v}/2$ and $\matrizp{m}{n}{v}{w}/2$ respectively. So, there exists $n_0$ such that, for $n>m>\ell>n_0$,
\begin{align}
\mu(\mathcal{V}_{\ell,m,n}) &\geq \frac{\matrizp{\ell}{m}{u}{v}\matrizp{m}{n}{v}{w}\mu_n(w)h_{\ell}(u)}{4} \nonumber \\
 &= \frac{\mu(\tau_n=w)}{4}\frac{h_{\ell}(u)}{h_{m}(v)}\matrizp{\ell}{m}{u}{v}\frac{h_{m}(v)}{h_{n}(w)}\matrizp{m}{n}{v}{w} \nonumber \\
 &\geq \frac{\delta_0^{3}}{16}, \nonumber
\end{align}
where in the last inequality we have used \eqref{eq:cota_inf_cuocientes} and the fact that the diagram is clean. Also, $n_0$ can be chosen such that  $\mu(\mathcal{D}_{\ell,n}) < \delta_0^{3}/16$, as a consequence of Lemma \ref{lemma:mu_D_converge_a_cero}.

Now we proceed by contradiction. Suppose that the assertion of the lemma is false for the fixed $u,v,w\in I_{\mu}$. Then we can find positive integers $n>m>\ell>n_0$ such that
\begin{align*}
\vvert\alpha\Lambda_{\ell,m}(u,v) + \alpha\Lambda_{m,n}(v,w) - \alpha\Lambda_{\ell,n}(u,w)\vvert 
\geq \gamma_{\ell,m}(u,v) + \gamma_{m,n}(v,w) + \gamma_{\ell,n}(u,w).
\end{align*}
We claim that for these positive integers we have $\mathcal{V}_{\ell,m,n}\subseteq \mathcal{D}_{\ell,n}$. Indeed, for any $x_0\in \mathcal{V}_{\ell,m,n}$, using \eqref{eq:formulareturn_mn_suma} we get
\begin{align*}
& \vvert \alpha \la s_{\ell,n}(x_0), h_{\ell} \ra - \alpha\Lambda_{\ell,n}(\tau_{\ell}(x_0),\tau_n(x_0))\vvert  \\
= & \vvert \alpha \la s_{\ell,n}(x_0), h_{\ell} \ra - \alpha\Lambda_{\ell,n}(u,w)\vvert \\
 \geq & \vvert\alpha\Lambda_{\ell,m}(u,v) + \alpha\Lambda_{m,n}(v,w) - \alpha\Lambda_{\ell,n}(u,w)\vvert \\
  & - \vvert\alpha\Lambda_{\ell,m}(u,v)-\alpha\la s_{\ell,m}(x_0), h_{\ell}\ra\vvert - \vvert\alpha\Lambda_{m,n}(v,w)-\alpha\la s_{m,n}(x_0), h_{m}\ra\vvert \\
  > & \gamma_{\ell,n}(u,w),
\end{align*}
{\it i.e.}, $x_0\in \mathcal{D}_{\ell,n}$.
The inclusion $\mathcal{V}_{\ell,m,n}\subseteq \mathcal{D}_{\ell,n}$ contradicts the fact that $\mu(\mathcal{D}_{\ell,n}) < \delta_0^{3}/16 \leq \mu(\mathcal{V}_{\ell,m,n})$.

This proves the lemma noticing that we have only a finite number of different choices for $u,v,w\in I_{\mu}$.
\end{proof}

Our next task consists in defining a suitable set of full measure (the complement of a set $\mathcal{C}$ of null measure) such that we can handle the size of $\vvert \alpha\la s_{m,n}(\cdot), h_m\ra - \alpha\Lambda_{m,n}(\tau_m(\cdot),\tau_n(\cdot))\vvert$ for any of their elements.
To do this, fix a decreasing sequence of positive real numbers $(\varepsilon_n \talque n\geq 1)$ such that $\sum_{n\geq 1} \varepsilon_n <\infty$. By \eqref{eq:def_gamma_mn} and the hypothesis, we get that for all $u,v\in I_{\mu}$, $\gamma_{m,n}(u,v) \xrightarrow[m\to+\infty]{} 0$ uniformly for $n>m$. 
Hence, we can fix an increasing sequence of positive integers $(m_k \talque k\geq 1)$ such that for every $n> m_k$ and $u,v\in I_{\mu}$
\begin{equation}\label{gamma_sumable}
\gamma_{m_k,n}(u,v) \leq \varepsilon_k \ .
\end{equation}

For $n>m\geq 1$ set
$$
\mathcal{C}_{m,n} = \mathcal{D}_{m,n} \cup \set{x\in X;\tau_m(x)\not\in I_{\mu}} \cup \set{x\in X;\tau_n(x)\not\in I_{\mu}}.
$$

\begin{lemma}\label{lemma:medida_C_es_nula}
Let $(m_k \talque k\geq 1)$ be the above-mentioned sequence \ and set $\ds\mathcal{C}=\limsup_{k\to+\infty}\mathcal{C}_{m_k,m_{k+1}}$. Then, $\mu(\mathcal{C})=0$.
\end{lemma}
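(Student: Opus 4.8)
The plan is to prove $\mu(\mathcal{C})=0$ via the Borel--Cantelli lemma, that is, by showing that $\sum_{k\geq 1}\mu(\mathcal{C}_{m_k,m_{k+1}})<+\infty$. Since $\mathcal{C}_{m,n}=\mathcal{D}_{m,n}\cup\set{\tau_m\notin I_{\mu}}\cup\set{\tau_n\notin I_{\mu}}$, subadditivity reduces the task to proving that each of the three families of sets has summable measure along the subsequence $(m_k)$.

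The two ``escape'' terms are the easy ones. By assumption \eqref{eq:torres_despreciables_medidas_sumables} we have $\sum_{n\geq 1}\mu(\tau_n=u)<+\infty$ for every $u\notin I_{\mu}$, so summing over the finitely many $u\notin I_{\mu}$ gives $\sum_{n\geq 1}\mu(\tau_n\notin I_{\mu})<+\infty$. In particular $\sum_{k\geq 1}\mu(\tau_{m_k}\notin I_{\mu})<+\infty$ and $\sum_{k\geq 1}\mu(\tau_{m_{k+1}}\notin I_{\mu})<+\infty$, being sums over subsequences of a convergent series of nonnegative terms.

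The crucial term is $\sum_k \mu(\mathcal{D}_{m_k,m_{k+1}})$, and here I would not be content with the mere convergence to $0$ given by Lemma \ref{lemma:mu_D_converge_a_cero}; I would instead revisit the explicit bound \eqref{eq:cota_Dmn} obtained in its proof, namely $\mu(\mathcal{D}_{m,n})\leq \frac{4}{\delta_0}\sum_{u,v\in I_{\mu}}\sqrt{\Delta_{m,n}(u,v)}\,\mu(\tau_n=v)$, and turn it into a summable estimate using the way the subsequence was chosen. From \eqref{eq:def_gamma_mn} one reads off the exact identity $\sqrt{\Delta_{m,n}(u,v)}=1-\cos(2\pi\gamma_{m,n}(u,v))$. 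Recall that $(m_k)$ was selected precisely so that $\gamma_{m_k,n}(u,v)\leq \varepsilon_k$ for all $n>m_k$ and $u,v\in I_{\mu}$ (see \eqref{gamma_sumable}), with $\sum_k\varepsilon_k<+\infty$. Taking $n=m_{k+1}$, using that $\cos$ is decreasing on $[0,\pi]$ (note $2\pi\gamma\leq\pi$ and we may assume $\varepsilon_k\leq 1/2$) together with $1-\cos\theta\leq \theta^2/2$, yields $\sqrt{\Delta_{m_k,m_{k+1}}(u,v)}\leq 1-\cos(2\pi\varepsilon_k)\leq 2\pi^2\varepsilon_k^2$. Plugging this in and bounding $\sum_{v\in I_{\mu}}\mu(\tau_{m_{k+1}}=v)\leq 1$ gives $\mu(\mathcal{D}_{m_k,m_{k+1}})\leq \frac{8\pi^2\,\# I_{\mu}}{\delta_0}\,\varepsilon_k^2$, which is summable in $k$ since $\varepsilon_k^2\leq\varepsilon_k$ and $\sum_k\varepsilon_k<+\infty$.

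Combining the three estimates gives $\sum_{k\geq 1}\mu(\mathcal{C}_{m_k,m_{k+1}})<+\infty$, whence Borel--Cantelli yields $\mu(\mathcal{C})=\mu(\limsup_{k\to+\infty}\mathcal{C}_{m_k,m_{k+1}})=0$. The only genuine subtlety --- and exactly the reason the subsequence $(m_k)$ was introduced with a summable control on the $\gamma$'s, rather than relying on the bare uniform convergence --- is upgrading the qualitative statement of Lemma \ref{lemma:mu_D_converge_a_cero} to the quantitative, summable bound above; this upgrade hinges entirely on the algebraic relation between $\gamma_{m,n}$ and $\Delta_{m,n}$ recorded in \eqref{eq:def_gamma_mn}.
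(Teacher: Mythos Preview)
Your argument is correct and follows essentially the same route as the paper: Borel--Cantelli applied to $\sum_k\mu(\mathcal{C}_{m_k,m_{k+1}})$, with the escape terms handled via \eqref{eq:torres_despreciables_medidas_sumables} and the $\mathcal{D}$-term via the explicit bound \eqref{eq:cota_Dmn} combined with the relation between $\sqrt{\Delta_{m,n}}$ and $\gamma_{m,n}$ coming from \eqref{eq:def_gamma_mn}. The only cosmetic difference is that the paper uses the cruder inequality $\sqrt{\Delta_{m,n}(u,v)}<\gamma_{m,n}(u,v)$ (valid once $\Delta$ is small) to get $\sqrt{\Delta_{m_k,m_{k+1}}}<\varepsilon_k$ directly, whereas you pass through $1-\cos(2\pi\gamma)\le 2\pi^2\gamma^2$ to obtain the sharper but unnecessary bound $\sqrt{\Delta_{m_k,m_{k+1}}}\le 2\pi^2\varepsilon_k^2$; either estimate is summable and the proofs are otherwise identical.
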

\begin{proof}
Notice that for $n>m$ large enough and every $u, v \in I_{\mu}$ we have
$$
\Delta_{m,n}(u,v) < \sqrt{\Delta_{m,n}(u,v)} < \gamma_{m,n}(u,v).
$$
Hence $\sqrt{\Delta_{m_k,m_{k+1}}(u,v)} < \varepsilon_k$ for every $k\geq 1$ and $u,v \in I_{\mu}$. Using the bound \eqref{eq:cota_Dmn}, one can obtain the summability of $(\mu(\mathcal{D}_{m_k,m_{k+1}}) \talque k\geq 1)$. Therefore

$$
\sum_{k\geq 1} \mu(\mathcal{C}_{m_k,m_{k+1}}) < \infty
$$
(recall \eqref{eq:torres_despreciables_medidas_sumables}), and the lemma follows by Borel-Cantelli.
\end{proof}

Finally, we proceed to construct the sequence $(\rho_m\talque m\geq 1)$ of Theorem \ref{theo:cns_old_rho} which is part of the main goal of this proof. To this end, fix $v_0\in I_{\mu}$, and by means of a standard diagonalization process we can find $(n_i\talque i\geq 1)$ such that for all $m\geq 1$ and $u\in I_{\mu}$, the sequence $(\alpha\Lambda_{m,n_i}(u,v_0)\talque i\geq 1)$ is convergent $\operatorname{mod}\; \ZZ$.
Considering this, we define
$$
\rho_m(u) = \left\{
\begin{array}{ccc}
\ds \frac{1}{\alpha}\lim_{i\to+\infty}\alpha\Lambda_{m,n_i}(u,v_0)\; (\operatorname{mod}\; \ZZ) &{\rm for}& u\in I_{\mu} \\
0&{\rm for}& u\not\in I_{\mu}.
\end{array}
\right.
$$

By Theorem \ref{theo:cns_old_rho}, we will establish the ``{}sufficient part''{} of Theorem \ref{theo:cns_sinrho} if we prove the following lemma.

\begin{lemma}\label{lemma:convergencia_rho}
Let $(m_k \talque k\geq 1)$ and $\mathcal{C}$ be as in the formulation of Lemma \ref{lemma:medida_C_es_nula}. Then, for all $x\in X\setminus \mathcal{C}$, 
$$
\left(\alpha(r_{m_k}(x) + \rho_{m_k}(\tau_{m_k}(x))) \talque k\geq 1\right)
$$
converges mod $\ZZ$.
\end{lemma}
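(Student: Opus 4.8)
The plan is to show that the sequence $a_k := \alpha\big(r_{m_k}(x) + \rho_{m_k}(\tau_{m_k}(x))\big)$ is a Cauchy sequence modulo $\ZZ$; since $\RR/\ZZ$ endowed with $\vvert\cdot\vvert$ is complete, this yields the asserted convergence. In fact I would prove the stronger summability $\sum_{k}\vvert a_{k+1}-a_k\vvert<+\infty$. To begin, fix $x\in X\setminus\mathcal{C}$. Since $\mathcal{C}=\limsup_k \mathcal{C}_{m_k,m_{k+1}}$, there is $k_0$ with $x\notin \mathcal{C}_{m_k,m_{k+1}}$ for all $k\geq k_0$. Writing $u_k=\tau_{m_k}(x)$, the definition $\mathcal{C}_{m,n}=\mathcal{D}_{m,n}\cup\set{\tau_m\notin I_{\mu}}\cup\set{\tau_n\notin I_{\mu}}$ gives $u_k,u_{k+1}\in I_{\mu}$ and $x\notin\mathcal{D}_{m_k,m_{k+1}}$, so that
$$
\vvert \alpha\la s_{m_k,m_{k+1}}(x),h_{m_k}\ra - \alpha\Lambda_{m_k,m_{k+1}}(u_k,u_{k+1})\vvert < \gamma_{m_k,m_{k+1}}(u_k,u_{k+1}) \leq \varepsilon_k,
$$
the last bound coming from \eqref{gamma_sumable} (since $m_{k+1}>m_k$).

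Next I would expand the increment. Applying the return-time relation \eqref{eq:formulareturn_mn} with $m=m_k<n=m_{k+1}$ gives $r_{m_{k+1}}(x)-r_{m_k}(x)=\la s_{m_k,m_{k+1}}(x),h_{m_k}\ra$, hence
$$
a_{k+1}-a_k = \alpha\la s_{m_k,m_{k+1}}(x),h_{m_k}\ra + \alpha\rho_{m_{k+1}}(u_{k+1}) - \alpha\rho_{m_k}(u_k).
$$
By the first step the suffix term may be replaced, up to an error $<\varepsilon_k$ in $\vvert\cdot\vvert$, by $\alpha\Lambda_{m_k,m_{k+1}}(u_k,u_{k+1})$, so it remains to control $\vvert \alpha\Lambda_{m_k,m_{k+1}}(u_k,u_{k+1}) + \alpha\rho_{m_{k+1}}(u_{k+1}) - \alpha\rho_{m_k}(u_k)\vvert$.

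This is the heart of the argument. Recall that along the diagonal subsequence $(n_i)$ one has $\alpha\rho_{m_j}(u_j)=\lim_{i\to+\infty}\alpha\Lambda_{m_j,n_i}(u_j,v_0)\;(\operatorname{mod}\ \ZZ)$ for $j=k,k+1$, with $v_0\in I_{\mu}$ and $u_k,u_{k+1}\in I_{\mu}$. I would apply the Quasi-Additivity Lemma \ref{lemma:cuasi_aditividad_Lambda} with $\ell=m_k$, $m=m_{k+1}$, $n=n_i$ and $u=u_k$, $v=u_{k+1}$, $w=v_0$, valid for all large $i$, to get
\begin{align*}
&\vvert \alpha\Lambda_{m_k,m_{k+1}}(u_k,u_{k+1}) + \alpha\Lambda_{m_{k+1},n_i}(u_{k+1},v_0) - \alpha\Lambda_{m_k,n_i}(u_k,v_0)\vvert \\
&\qquad < \gamma_{m_k,m_{k+1}}(u_k,u_{k+1}) + \gamma_{m_{k+1},n_i}(u_{k+1},v_0) + \gamma_{m_k,n_i}(u_k,v_0).
\end{align*}
Letting $i\to+\infty$ and using continuity of $\vvert\cdot\vvert$ on $\RR/\ZZ$, the two $\Lambda_{\cdot,n_i}$ terms converge modulo $\ZZ$ to $\alpha\rho_{m_{k+1}}(u_{k+1})$ and $\alpha\rho_{m_k}(u_k)$, while the three $\gamma$ terms are bounded via \eqref{gamma_sumable} by $\varepsilon_k$, $\varepsilon_{k+1}$ and $\varepsilon_k$ respectively (each index $n_i$ exceeds the corresponding $m_k$ or $m_{k+1}$). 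This yields $\vvert \alpha\Lambda_{m_k,m_{k+1}}(u_k,u_{k+1}) + \alpha\rho_{m_{k+1}}(u_{k+1}) - \alpha\rho_{m_k}(u_k)\vvert \leq 2\varepsilon_k+\varepsilon_{k+1}$, and combined with the $\varepsilon_k$ from the second step,
$$
\vvert a_{k+1}-a_k\vvert \leq 3\varepsilon_k+\varepsilon_{k+1} \quad \text{for all } k\geq k_0.
$$
Since $\sum_k\varepsilon_k<+\infty$, summability follows, $(a_k)$ is Cauchy modulo $\ZZ$, and the lemma is proved.

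The main obstacle is the passage to the limit $i\to+\infty$ in the third step: one must simultaneously ensure that $i$ is large enough for Quasi-Additivity to apply, that every $\gamma$-term involving $n_i$ stays uniformly controlled by the summable sequence $(\varepsilon_k)$ rather than growing with $i$, and that the defining limits of $\rho_{m_k}$ and $\rho_{m_{k+1}}$ are taken along the \emph{same} subsequence $(n_i)$ so that both substitutions can be performed at once. Everything else reduces to the triangle inequality for $\vvert\cdot\vvert$ together with the return-time identity \eqref{eq:formulareturn_mn}.
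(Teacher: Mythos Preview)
Your proof is correct and follows essentially the same approach as the paper: both combine the quasi-additivity of $\alpha\Lambda$ (Lemma~\ref{lemma:cuasi_aditividad_Lambda}), the definition of $\rho_m$ as limits along $(n_i)$, and the uniform bounds $\gamma_{m_k,n}\leq\varepsilon_k$ from \eqref{gamma_sumable} to show the sequence is Cauchy modulo $\ZZ$. The only cosmetic difference is that the paper fixes $\ell>k$, picks a single finite $j$ from $(n_i)$ approximating both $\rho_{m_k}(u_k)$ and $\rho_{m_\ell}(u_\ell)$ to within $\varepsilon/4$, and then telescopes via auxiliary quantities $\Theta_i,\Omega_i$, whereas you bound each consecutive increment $\vvert a_{k+1}-a_k\vvert$ directly by passing to the limit $i\to\infty$ in the quasi-additivity inequality---a slightly cleaner packaging that yields the marginally stronger summability $\sum_k\vvert a_{k+1}-a_k\vvert<\infty$.
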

\begin{proof}
Take $\varepsilon>0$ and $x\not\in\mathcal{C}$. There exists a positive integer $k_0$ such that $x\not\in\mathcal{C}_{m_k,m_{k+1}}$ for all $k\geq k_0$. Here and subsequently $u_k$ denotes the vertex $\tau_{m_k}(x)$. By definition of $\mathcal{C}_{m_k,m_{k+1}}$ we have $u_k\in I_{\mu}$ and for $k\geq k_0$
\begin{equation}\label{eq:desigualdad_miembros_conjunto_bueno}
\vvert \alpha\la s_{m_{k},m_{k+1}}(x), 	h_{m_k}\ra  - \alpha\Lambda_{m_k,m_{k+1}}(u_k,u_{k+1})\vvert < \gamma_{m_k,m_{k+1}}(u_k,u_{k+1}).
\end{equation}

The integer $k_0$ will be chosen large enough such that $\sum_{k=k_0}^{\infty}\varepsilon_k < \varepsilon/8$ (recall the sequence $(\varepsilon_k\talque k\geq 1)$ from the construction of sequence $(m_k\talque k\geq 1)$ is summable).

Now fix $\ell > k \geq k_0$. By definition of the $\rho_m$'s we can find an integer $j\geq 0$ such that simultaneously
\begin{align}
\vvert \alpha\Lambda_{m_k,j}(u_k,v_0) - \alpha\rho_{m_k}(u_k)\vvert &< \varepsilon/4 \; \; \; {\rm and}\nonumber \\
\vvert \alpha\Lambda_{m_{\ell},j}(u_{\ell},v_0) - \alpha\rho_{m_{\ell}}(u_{\ell})\vvert &< \label{eq:Lambda_cerca_rho}\varepsilon/4.
\end{align}

For $k\leq i < \ell$ define
\begin{align*}
\Theta_i &= \alpha\Lambda_{m_i,m_{i+1}}(u_i,u_{i+1}) + \alpha\Lambda_{m_{i+1},j}(u_{i+1},v_0) - \alpha\Lambda_{m_i,j}(u_i,v_0), \\
\Omega_i &= \alpha\la s_{m_i,m_{i+1}}(x), h_{m_i}\ra - \alpha\Lambda_{m_i,m_{i+1}}(u_i,u_{i+1}).
\end{align*}

From the quasi additivity stated in Lemma \ref{lemma:cuasi_aditividad_Lambda} and \eqref{eq:desigualdad_miembros_conjunto_bueno} we have, for $k\leq i < \ell$,
\begin{align}
\vvert\Theta_i\vvert & < \gamma_{m_i,m_{i+1}}(u_i,u_{i+1}) + \gamma_{m_{i+1},j}(u_{i+1},v_0) + \gamma_{m_i,j}(u_i,v_0) \nonumber \\
 & < \varepsilon_i + \varepsilon_{i+1} + \varepsilon_i < 3\varepsilon_i, \label{eq:desigualdad_Theta_i}\\
\vvert\Omega_i\vvert & < \gamma_{m_i,m_{i+1}}(u_i,u_{i+1}) < \varepsilon_i \label{eq:desigualdad_Omega_i}
\end{align}
(notice that we could have chosen $k_0$ large enough in order to apply Lemma \ref{lemma:cuasi_aditividad_Lambda}), and with the help of properties \eqref{eq:formulareturn_mn} and \eqref{eq:suma_sufijos} of suffix vectors, we deduce the identity
\begin{equation}\label{eq:identidad_Theta_Omega_i}
\sum_{i=k}^{\ell-1}\Theta_i + \Omega_i  = \alpha \left(r_{m_{\ell}}(x) - r_{m_{k}}(x)\right) + \alpha\Lambda_{m_{\ell},j}(u_{\ell},v_0) - \alpha\Lambda_{m_{k},j}(u_{k},v_0).
\end{equation}

Combining \eqref{eq:Lambda_cerca_rho}, \eqref{eq:desigualdad_Theta_i}, \eqref{eq:desigualdad_Omega_i} and \eqref{eq:identidad_Theta_Omega_i} gives
\begin{align*}
\vvert \alpha\left(r_{m_{\ell}}(x) + \rho_{m_{\ell}}(\tau_{m_{\ell}}(x))\right) - \alpha\left(r_{m_{k}}(x) + \rho_{m_{k}}(\tau_{m_{k}}(x))\right)\vvert &  \\
 & \hspace{-9em}< \frac{\varepsilon}{2} + \sum_{i=k}^{\ell-1} \vvert\Theta_i\vvert + \vvert\Omega_i\vvert \\
 & \hspace{-9em}< \frac{\varepsilon}{2} + \sum_{i=k}^{\ell-1} 4\varepsilon_i < \frac{\varepsilon}{2} + 4\sum_{i=k}^{\infty} \varepsilon_i < \varepsilon.
\end{align*}
Therefore, $(\alpha\left(r_{m_{k}}(x) + \rho_{m_{k}}(\tau_{m_{k}}(x))\right)\talque k\geq 1)$
is a Cauchy sequence. This proves the lemma and consequently, Theorem \ref{theo:cns_sinrho}.
\end{proof}

The criterion of Theorem \ref{theo:cns_sinrho} can be simplified as folows.

\begin{coro}\label{coro:cs_sin_rho}
Let $(X,T)$ be a minimal Cantor  system given by a Bratteli-Vershik representation as in Theorem \ref{theo:cns_sinrho} and let $\mu$ be one of its ergodic probability measures. 
Let us take a complex number $\lambda$ of modulus 1. If for all $u,v\in I_{\mu}$
\begin{align}\label{eq:cs_sin_rho}
\frac{\displaystyle \left|\sum_{s\in\sufijocbb{m}{n}{u}{v}}\lambda^{\la s,h_m\ra}\right|}{\matrizp{m}{n}{u}{v}}\xrightarrow[m\to+\infty]{} 1
\end{align}
uniformly for $n>m$, then $\lambda$ is an eigenvalue of $(X,T)$ for $\mu$. The converse is also true when the Bratteli-Vershik representation of $(X,T)$ satisfies condition \eqref{eq:cota_inf_cuocientes}. 
\end{coro}
\begin{proof}
We will show that condition (2) of Theorem \ref{theo:cns_sinrho} holds.

For $u\in\{1,\dots, d\}$ and $v\in I_{\mu}$
\begin{align*}
&\frac{h_m(u)}{h_n(v)}\left[\matrizp{m}{n}{u}{v} - \left|\sum_{s\in\sufijocbb{m}{n}{u}{v}}\lambda^{\la s,h_m\ra}\right|\right] \\ &\qquad\qquad = \frac{h_m(u)}{h_n(v)}\matrizp{m}{n}{u}{v}\left[1-\frac{\left|\sum_{s\in\sufijocbb{m}{n}{u}{v}}\lambda^{\la s,h_m\ra}\right|}{\matrizp{m}{n}{u}{v}}\right].
\end{align*}
If $u\not\in I_{\mu}$ the first factor of the right hand side converges to 0 in $m$, uniformly for $m>n$, because of part (1) of Lemma \ref{lemma:comportamiento_cuocientes}, while the second factor remains bounded. 
If $u\in I_{\mu}$ the convergence to 0 of the left hand is implied from the hypotheses of this Corollary. 

It is immediate that the converse is true when the Bratteli-Vershik representation satisfies condition \eqref{eq:cota_inf_cuocientes}
\end{proof}

\subsection{A necessary and sufficient condition in the exact finite rank case}
Systems of exact finite rank were introduced in \cite{bkms}, they are uniquely ergodic and are defined asserting that $I_\mu = \{ 1, \ldots , d\}$ for the unique ergodic measure $\mu$.

With respect to eigenvalues, the general necessary and sufficient condition for linearly recurrent minimal Cantor systems proposed in \cite{necesariasuficiente} is described in the form of a convergence of a series. These systems are uniquely ergodic and even further, from \cite[Lemma~4]{lr} it follows that they are of exact finite rank. 

In the case of finite rank minimal Cantor systems a similar condition was shown to be necessary in Proposition 18 of \cite{rangofinito}, making use of the existence of auxiliary functions $\rho_n:V_n \to \RR$ as those in Theorem \ref{theo:cns_old_rho}. 
One virtue of this condition is that it does not need to handle a uniform convergences in two indices as in Theorem \ref{theo:cns_sinrho}. 
Nevertheless, the auxiliary functions could be difficult to compute as was observed in the proof of previous section. 

Here, for a convenient representation of the system, we prove that the necessary condition to be an eigenvalue given in \cite{rangofinito} is actually sufficient in the exact finite rank case.

We will say that a clean Bratteli-Vershik representation is \emph{stable} if condition \eqref{eq:cota_inf_cuocientes}
 holds, {\emph {i.e.}}, for all $1 \leq m < n$ and $u,v\in I_{\mu}$, $h_m(u)\matrizp{m}{n}{u}{v}/h_n(v) > \delta_0/2$, where $\delta_0$ is the constant coming from the definition of clean property. 
We observe that it is always possible to get a stable representation of a minimal Cantor system thanks to Lemma \ref{lemma:comportamiento_cuocientes}.
This condition, as was illustrated in the proof of previous theorem, recovers in a better way the behaviour of invariant measures for the matrices $M_n$ of the Bratteli-Vershik representation. 

\begin{theo} 
\label{theo:cnsvpmedible}
Let $(X,T)$ be a minimal Cantor system given by a proper and stable Bratteli-Vershik representation $B=(V,E,\preceq)$ of exact finite rank $d$ for  the ergodic probability measure $\mu$. 
Then, $\lambda=\exp(2i \pi \alpha)$ is an eigenvalue of $(X,T)$ for $\mu$ if and only if for every $n\geq 1$ there exist functions $\rho_{n}: V_n \to \RR$ such that
\begin{equation}
\label{eq:cnsvpmedible}
\sum_{n\geq 1}  \frac{1}{M_{n+1}(u_{n},u_{n+1})} \sum_{s\in S_{n}(u_{n}, u_{n+1})} 
\left |\lambda^{\langle s,h_{n} \rangle-\rho_{n+1}(u_{n+1})+\rho_{n}(u_{n})}-1 \right |^{2} < +\infty
\end{equation}
for any sequence $(u_{n}; n\geq 1)$ in $I_{\mu}$.
\end{theo}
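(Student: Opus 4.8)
The plan is to prove the two implications separately, in both cases leaning on the abstract criterion of Theorem \ref{theo:cns_old_rho} and on the martingale $\EE(f\mid\T_n)$ analysed just before the statement. For the necessity I would start from an eigenfunction $f$ with $\abs{f}=1$ and the functions $c_n,\rho_n$ given by \eqref{eq:definicion_c_ro}, so that $\EE(f\mid\T_n)=c_n(\tau_n)\lambda^{-r_n-\rho_n(\tau_n)}$. Since $(\T_n)$ is increasing and generates the Borel $\sigma$-algebra, the increments are orthogonal and $\sum_n\|\EE(f\mid\T_{n+1})-\EE(f\mid\T_n)\|_2^2<+\infty$. Feeding the tower decomposition \eqref{eq:union_disjunta} (with $m=n$) and $r_{n+1}=r_n+\la s_n,h_n\ra$ into this norm, the phase $\lambda^{-r_n}$ factors out of each rung and a direct computation gives
$$\|\EE(f\mid\T_{n+1})-\EE(f\mid\T_n)\|_2^2=\sum_{u\in V_n}h_n(u)\sum_{v\in V_{n+1}}\mu_{n+1}(v)\!\!\sum_{s\in S_n(u,v)}\!\!\left|c_{n+1}(v)\lambda^{-\la s,h_n\ra-\rho_{n+1}(v)}-c_n(u)\lambda^{-\rho_n(u)}\right|^2 .$$

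To finish the necessity I would use the elementary identity $\abs{c'A-cB}^2=(c'-c)^2+c'c\,\abs{A-B}^2\ge c'c\,\abs{A-B}^2$ (valid for unimodular $A,B$), which lets me discard the amplitude term and retain exactly $\abs{\lambda^{\la s,h_n\ra-\rho_{n+1}(v)+\rho_n(u)}-1}^2$, matching \eqref{eq:cnsvpmedible} up to the sign convention for $\rho$. In the exact rank case Lemma \ref{lemma:convergencia_c} gives $c_n(u)\to 1$ for every $u$, hence $c_{n+1}(v)c_n(u)\ge 1/4$ for large $n$; also $\mu(\tau_{n+1}=v)\ge\delta_0$, and after a harmless telescoping (eigenvalues are conjugacy invariant, and the validity of \eqref{eq:cnsvpmedible} transfers by the additivity \eqref{eq:formulareturn_mn_suma}) we may assume \eqref{eq:cota_inf_cuocientes}, so that $\frac{h_n(u)}{h_{n+1}(v)}M_{n+1}(u,v)\ge\delta_0/2$. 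Retaining in the displayed sum only the single summand $(u,v)=(u_n,u_{n+1})$ for a fixed sequence $(u_n)$ in $I_\mu=\{1,\dots,d\}$, these bounds show that the $n$-th term of \eqref{eq:cnsvpmedible} is at most a fixed multiple of $\|\EE(f\mid\T_{n+1})-\EE(f\mid\T_n)\|_2^2$, and summing over $n$ yields \eqref{eq:cnsvpmedible}.

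For the sufficiency I would reduce \eqref{eq:cnsvpmedible} to condition (2) of Theorem \ref{theo:cns_sinrho} and then invoke that theorem. Writing $a_n(u,v)=\frac{1}{M_{n+1}(u,v)}\sum_{s\in S_n(u,v)}\abs{\lambda^{\la s,h_n\ra-\rho_{n+1}(v)+\rho_n(u)}-1}^2$, the first step is purely combinatorial: if $\sum_n a_n(u_n,u_{n+1})<+\infty$ along every vertex sequence, then $\sum_n\max_{u,v}a_n(u,v)<+\infty$. Indeed, were the latter infinite, a pigeonhole over the finitely many argmax pairs would produce a pair $(u^\ast,v^\ast)$ carrying infinite mass; splitting the corresponding index set into its even and odd members gives a set with no two consecutive integers and still infinite mass, along which one builds a single vertex sequence passing through $(u^\ast,v^\ast)$, contradicting the hypothesis. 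In particular $a_n\to 0$ uniformly, and since $1-\cos=\tfrac12\abs{\cdot}^2$ this gives $M_{n+1}(u,v)-\abs{\tilde Q_n(u,v)}\le\tfrac12 M_{n+1}(u,v)a_n(u,v)$, where $\tilde Q_n(u,v)=\sum_{s\in S_n(u,v)}\lambda^{\la s,h_n\ra-\rho_{n+1}(v)+\rho_n(u)}$; consequently each entry $\tilde Q_n(u,v)=\abs{\tilde Q_n(u,v)}e^{i\phi_n(u,v)}$ has small argument, $\phi_n(u,v)^2\le C\,a_n(u,v)$.

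The decisive step is to propagate this consecutive-level control to all pairs $m<n$. By the additivity \eqref{eq:formulareturn_mn_suma} the quantity $\sum_{s\in\sufijocbb{m}{n}{u}{v}}\lambda^{\la s,h_m\ra}$ is the $(u,v)$ entry of the matrix product $Q_m\cdots Q_{n-1}$, with $Q_i(w,w')=\sum_{s\in S_i(w,w')}\lambda^{\la s,h_i\ra}$, and its modulus is unchanged by the diagonal unimodular conjugation encoding the $\rho$'s, so I may replace $Q_i$ by the aligned factors $\tilde Q_i$. I would then split $\matrizp{m}{n}{u}{v}-\abs{\sum_{s}\lambda^{\la s,h_m\ra}}$ into an amplitude defect, bounded entrywise by $\tfrac12\matrizp{m}{n}{u}{v}\sum_{i\ge m}a_i\to 0$, and a phase–cancellation defect $\big(\abs{\tilde Q_m}\cdots\abs{\tilde Q_{n-1}}-\abs{\tilde Q_m\cdots\tilde Q_{n-1}}\big)(u,v)$. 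This last term, normalised by $\matrizp{m}{n}{u}{v}$, equals the weighted variance of the accumulated phase $\sum_{i=m}^{n-1}\phi_i$ of a path, each increment obeying $\phi_i(u,v)^2\le C\,a_i(u,v)$; the real obstacle is to bound this variance by $C\sum_{i\ge m}a_i$ rather than by the useless $\big(\sum_i\sqrt{a_i}\big)^2$, for which I would use the uniform mixing (a Doeblin condition) of the bridge Markov chain with kernels $M_{i+1}$, guaranteed by the strict positivity (H2) and the uniform comparability of heights of a clean exact-rank diagram; the drift of the mean phase is absorbed automatically into the clustering value $\Lambda_{m,n}$ of Theorem \ref{theo:cns_sinrho}. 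Granting this variance estimate, $\frac{h_m(u)}{h_n(v)}\big[\matrizp{m}{n}{u}{v}-\abs{\sum_{s}\lambda^{\la s,h_m\ra}}\big]\to 0$ uniformly for $n>m$, which is precisely condition (2) of Theorem \ref{theo:cns_sinrho}, and that theorem then yields that $\lambda$ is an eigenvalue of $(X,T)$ for $\mu$. I expect this variance/mixing bound to be the main technical difficulty, the second-moment summability $\sum_n\max_{u,v}a_n<+\infty$ being exactly strong enough to win only because the cancellation occurs in the variance and not in the crude per-path phase.
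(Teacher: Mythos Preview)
Your necessity argument via the martingale increments $\|\EE(f\mid\T_{n+1})-\EE(f\mid\T_n)\|_2^2$ is correct in outline and is essentially the argument of \cite{rangofinito}, Proposition~18, which the paper simply cites. One caveat: the lower bound $\frac{h_n(u)M_{n+1}(u,v)}{h_{n+1}(v)}\ge\delta_0/2$ is \emph{not} guaranteed for the given representation, only after a telescoping coming from Lemma~\ref{lemma:comportamiento_cuocientes}(2). Your claim that ``the validity of \eqref{eq:cnsvpmedible} transfers by the additivity \eqref{eq:formulareturn_mn_suma}'' is not obvious, since $x\mapsto|\lambda^{x}-1|^2$ is not additive; this is a genuine, if minor, loose end.

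For the sufficiency you take a route genuinely different from the paper's, and the paper's route is both simpler and avoids the gap you flag. The paper does \emph{not} reduce to Theorem~\ref{theo:cns_sinrho}. Instead it builds the eigenfunction directly: set $\btheta_n(x)$ equal to the fractional part of $\alpha(\langle s_n(x),h_n\rangle-\rho_{n+1}(\tau_{n+1}(x))+\rho_n(\tau_n(x)))$, center it to $X_n=\btheta_n-\EE_\mu(\btheta_n)$, and split $X_n=Z_n+Y_n$ with $Z_n=\btheta_n-\EE_\mu(\btheta_n\mid\T_n)$ and $Y_n=\EE_\mu(X_n\mid\T_n)$. Hypothesis \eqref{eq:cnsvpmedible} (rewritten as $\sum_n\xi_n^2<\infty$ with $\xi_n^2$ the $n$-th term) gives $\EE_\mu(Z_n^2)\le K\xi_n^2$ and $|Y_n|\le K\xi_n$; the $Z_n$ are orthogonal, so $\sum Z_n$ converges in $L^2$, and the Doeblin estimate for the \emph{forward} Markov chain $(\tau_n)$ under $\mu$ (Lemma~\ref{lem:geometricdeacrieasing}) gives $|\EE_\mu(Y_nY_{n-k})|\le K\zeta^k\xi_n\xi_{n-k}$, whence $\sum Y_n$ converges in $L^2$ as well. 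Then $f=\lim_n\exp(2\pi i\sum_{k<n}X_k)$ exists in $L^2$ and a direct check yields $f\circ T=\lambda^{-1}f$.

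What this buys compared to your plan: (i) no uniform-in-$(m,n)$ estimate is needed, only a one-step estimate in $n$; (ii) the mixing is for the natural forward chain $(\tau_n)$ under $\mu$, not for a bridge chain conditioned on both endpoints with perturbed weights $|\tilde Q_i|$; (iii) the ``variance of the accumulated phase'' that you identify as the main difficulty is exactly the quantity the $Z_n/Y_n$ decomposition controls, but it yields the eigenfunction immediately, so the detour through condition~(2) of Theorem~\ref{theo:cns_sinrho} is superfluous. Your matrix-product picture $\sum_{s\in S_{m,n}(u,v)}\lambda^{\langle s,h_m\rangle}=(Q_m\cdots Q_{n-1})(u,v)$ and the bound $\phi_i^2\le C\,a_i$ are correct, but the covariance bound for the bridge chain that you would still need is not established in your sketch; if you pursued it, you would end up redoing the paper's $Y_n$ argument in a more awkward coordinate system.
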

\vskip 0.5cm

To prove condition \eqref{eq:cnsvpmedible} is sufficient for $\lambda$ to be an eigenvalue of $(X,T)$ for $\mu$ we follow the same strategy developed in \cite{necesariasuficiente}. The proof will be a consequence of several steps developed in the next subsections where we follow notations in Theorem \ref{theo:cnsvpmedible} and we assume \eqref{eq:cnsvpmedible} holds.

\subsubsection{An alternative formulation} For $n\geq 1$ let 
us define $\theta_{n}(s,u_{n},u_{n+1})$ to be the fractional part in $(-1/2,1/2]$ of $\alpha (\langle s,h_{n} \rangle-\rho_{n+1}(u_{n+1})+\rho_{n}(u_{n}))$. Clearly, 
$\vvert \theta_{n}(s,u_{n},u_{n+1}) \vvert=| \theta_{n}(s,u_{n},u_{n+1})|$.
We have
\begin{align*}
|e^{2i\pi\theta_{n}(s,u_{n},u_{n+1})}-1|^{2} 
&=2 (1-\cos( 2\pi\theta_{n}(s,u_{n},u_{n+1}))). 
\end{align*}

The function defined by $f(t)=t^{2}/\left(2\left(1-\cos(2\pi t)\right)\right)$ for $t\in [-1/2,1/2]\setminus \left\{0 \right\}$ is even and strictly positive. Also, we can define $f(0)=1/(4\pi^2)>0$ and then $f$ is continuous and strictly positive on $[-1/2,1/2]$. By the compactness of $[-1/2,1/2]$ there exist strictly positive constants $C_1$ and $C_2$ such that $f\left([-1/2,1/2]\right) \subseteq [C_1, C_2]$ (in fact $f(0)=1/(4\pi^2)\leq f(t)\leq 1/16=f(1/2)$ for $t\in [-1/2,1/2]$). So, for every $s \in \sufijocbb{n}{n+1}{u_{n}}{u_{n+1}}$ we have

$$
0 < C_1 \leq \frac{\vvert  \theta_{n}(s,u_{n},u_{n+1}) \vvert^{2}}{2 (1-\cos( 2\pi\theta_{n}(s,u_{n},u_{n+1})))} \leq C_2. 
$$
Then condition \eqref{eq:cnsvpmedible} is equivalent with 
\begin{equation}
\label{eq:cnsvpmediblebis}
\sum_{n\geq 1} \frac{1}{M_{n+1}(u_{n},u_{n+1})} \sum_{s\in \sufijocbb{n}{n+1}{u_{n}}{u_{n+1}}} 
\vvert  \theta_{n}(s,u_{n},u_{n+1}) \vvert ^{2} < +\infty \ .
\end{equation}
\medskip

In the linearly recurrent case, one gets that condition \eqref{eq:cnsvpmediblebis} is equivalent to $$\sum_{n\geq 1} \vvert \alpha h_{n} \vvert^{2} < +\infty \ ,$$
which is the necessary and sufficient condition for $\lambda$ to be an eigenvalue for any ergodic probability measure $\mu$ proved in \cite{necesariasuficiente}. 

\subsubsection{Markov process}
In \cite{necesariasuficiente} was observed that $(\tau_n; n \geq 1)$ is a Markov chain with respect to any invariant measure of $(X,T)$ (see also \cite{alistecoronel}).
Its importance is mainly due to the mixing condition given below (Lemma \ref{lem:geometricdeacrieasing}).
First we set some notations and assumptions. 

\begin{enumerate}
\item For integers $1\leq m<n$ define the following stochastic matrices: for $u,v \in I_\mu$ 
\begin{align*}
q_{m,n}(u,v)&=\mu(\tau_{n}=v|\tau_{m}=u) \\
&=\frac{\mu_{n}(v)}{\mu_{m}(u)} \matrizp{m}{n}{u}{v} \\
&= \frac{\mu(\tau_{n}=v)}{\mu(\tau_{m}=u)}  \frac{h_{m}(u)}{h_{n}(v)}  \matrizp{m}{n}{u}{v}. 
\end{align*}
\item 
Since the representation is stable we have that for all $u,v\in I_{\mu}$ and integers $1\leq m<n$
$$q_{m,n}(u,v)\geq \delta_0^{2}/2, \quad \mu(\tau_{n}=v)\geq \delta_0 .$$

\item For $n >1$ define $\zeta(q_{n-1,n})=1-\min_{u,v\in I_{\mu}}(q_{n-1,n}(u,v))$. By (2) we have 
$\zeta(q_{n-1,n})\leq 1- \delta_0^{2}/2 < 1$. Let $\zeta=\sup_{n>1} \zeta(q_{n-1,n})$. 
\end{enumerate}

Using Lemma 5.3 in \cite{alistecoronel} we get,
\begin{lemma} For $m,n \in \NN$ with $m<n$
$$\max_{u,u',v \in I_{\mu}} \left | q_{m,n}(u,v)-q_{m,n}(u',v)\right | \leq \zeta^{n-m} \ .$$
\end{lemma}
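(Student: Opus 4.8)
The plan is to recognize this inequality as the standard contraction of a coefficient of ergodicity along the Markov chain $(\tau_n;n\in\NN)$, which is precisely the content of Lemma 5.3 in \cite{alistecoronel}; nevertheless I would present a self-contained argument. The starting point is the Markov property recalled just above: since $(\tau_n)$ is a Markov chain for $\mu$ and, in the exact finite rank setting, $I_\mu=\{1,\ldots,d\}$ consists of all the vertices, the conditional probabilities $q_{m,n}$ are genuine stochastic matrices on $I_\mu$ obeying the Chapman--Kolmogorov relation $q_{m,n}=q_{m,m+1}\,q_{m+1,n}$ for $m<m+1<n$, that is $q_{m,n}(u,v)=\sum_{w\in I_\mu}q_{m,m+1}(u,w)\,q_{m+1,n}(w,v)$.

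The key lemma I would isolate is an oscillation--contraction estimate: for any stochastic matrix $P=(P(u,w))_{u,w\in I_\mu}$ and any column vector $c=(c(w))_{w\in I_\mu}$, writing $\mathrm{osc}(c)=\max_u c(u)-\min_u c(u)$ and $\zeta(P)=1-\min_{u,w}P(u,w)$, one has $\mathrm{osc}(Pc)\le\zeta(P)\,\mathrm{osc}(c)$. I would prove this by fixing two rows $u,u'$, setting $a_w=P(u,w)-P(u',w)$ (so $\sum_w a_w=0$), and splitting into positive and negative parts $a_w^{\pm}$ of common total mass $A=\tfrac12\sum_w|a_w|$. Then $(Pc)(u)-(Pc)(u')=\sum_w a_w c(w)\le A(\max_w c-\min_w c)=A\,\mathrm{osc}(c)$, and since $A=1-\sum_w\min\big(P(u,w),P(u',w)\big)\le 1-\min_{u'',w}P(u'',w)=\zeta(P)$, the claim follows after symmetrizing in $u,u'$.

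With this in hand I would conclude by induction on $n-m$. For the base case $n=m+1$, each column satisfies $\mathrm{osc}(q_{m,m+1}(\cdot,v))\le 1-\min_{u,w}q_{m,m+1}(u,w)=\zeta(q_{m,m+1})\le\zeta$, because every entry lies in $[\min_{u,w}q_{m,m+1}(u,w),\,1]$. For the inductive step, Chapman--Kolmogorov lets me write the $v$-th column of $q_{m,n}$ as $q_{m,n}(\cdot,v)=q_{m,m+1}\big(q_{m+1,n}(\cdot,v)\big)$, so I apply the oscillation--contraction lemma with $P=q_{m,m+1}$ and $c=q_{m+1,n}(\cdot,v)$; using $\zeta(q_{m,m+1})\le\zeta$ together with the inductive bound $\mathrm{osc}(q_{m+1,n}(\cdot,v))\le\zeta^{n-m-1}$ yields $\mathrm{osc}(q_{m,n}(\cdot,v))\le\zeta^{n-m}$. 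Since $\max_{u,u',v}|q_{m,n}(u,v)-q_{m,n}(u',v)|=\max_v\mathrm{osc}(q_{m,n}(\cdot,v))$, this is exactly the asserted bound.

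The argument is essentially routine, and there is no deep obstacle; the two points demanding care are, first, that the transition matrices restricted to $I_\mu$ are honestly stochastic (guaranteed here by the exact finite rank hypothesis $I_\mu=\{1,\ldots,d\}$, so no mass escapes in the Chapman--Kolmogorov sum), and second, that the contraction constant comes out as exactly $\zeta(P)$ rather than $2\zeta(P)$ --- this is why I use the positive/negative-part decomposition of $a_w$ (equivalently, subtracting the midpoint of $c$) rather than a crude triangle inequality. The uniformity in $n>m$ is then automatic, since $\zeta=\sup_n\zeta(q_{n-1,n})\le 1-\delta_0^{2}/2<1$ bounds every single-step factor simultaneously.
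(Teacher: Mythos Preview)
Your proposal is correct; the paper itself does not give a proof but simply invokes Lemma~5.3 of \cite{alistecoronel}, and what you have written is exactly the standard Dobrushin--type contraction argument behind that lemma. In particular your oscillation bound $\mathrm{osc}(Pc)\le\zeta(P)\,\mathrm{osc}(c)$ together with the Chapman--Kolmogorov factorisation and induction on $n-m$ is the canonical route, so you are just filling in the details the paper omits.
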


This lemma allows to deduce the following property. 

\begin{lemma} 
\label{lem:geometricdeacrieasing}
For $m,n \in \NN$ with $m<n$ and $u,v \in I_{\mu}$ we have 
$$\left | \mu(\tau_{n}=v | \tau_{m}=u)- \mu(\tau_{n}=v) \right | \leq  \zeta^{n-m}.$$
\end{lemma}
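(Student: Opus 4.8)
The plan is to read this as the standard mixing consequence of the contraction estimate furnished by the preceding lemma, exactly as in the classical theory of inhomogeneous Markov chains. The idea is that $\mu(\tau_n=v)$ is a convex combination of the conditional probabilities $q_{m,n}(u',v)=\mu(\tau_n=v\mid \tau_m=u')$ over the possible values $u'$ of $\tau_m$, so its distance to any single $q_{m,n}(u,v)$ is controlled by the \emph{spread} of the family $\{q_{m,n}(\cdot,v)\}$, which the preceding lemma bounds by $\zeta^{n-m}$.

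First I would record the law of total probability (which requires only conditioning, not the full Markov property) in the form
\[
\mu(\tau_n=v)=\sum_{u'\in V_m}\mu(\tau_n=v\mid \tau_m=u')\,\mu(\tau_m=u')=\sum_{u'\in I_{\mu}}q_{m,n}(u',v)\,\mu(\tau_m=u'),
\]
where the second equality uses the exact finite rank hypothesis $I_{\mu}=\{1,\ldots,d\}=V_m$, so that the sum runs precisely over $I_{\mu}$, together with the fact that $\sum_{u'\in I_{\mu}}\mu(\tau_m=u')=1$. This last identity is the crucial structural input: it guarantees that the weights $\mu(\tau_m=u')$ form a genuine probability vector supported on $I_{\mu}$, so that no mass escapes to vertices where $q_{m,n}$ is not controlled.

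Then, since the weights sum to $1$, I would write $q_{m,n}(u,v)=\sum_{u'\in I_{\mu}}q_{m,n}(u,v)\,\mu(\tau_m=u')$ and subtract, obtaining
\[
\mu(\tau_n=v\mid \tau_m=u)-\mu(\tau_n=v)=\sum_{u'\in I_{\mu}}\bigl(q_{m,n}(u,v)-q_{m,n}(u',v)\bigr)\mu(\tau_m=u').
\]
Taking absolute values, applying the triangle inequality, bounding each $\left|q_{m,n}(u,v)-q_{m,n}(u',v)\right|$ by $\zeta^{n-m}$ via the preceding lemma, and using $\sum_{u'\in I_{\mu}}\mu(\tau_m=u')=1$ once more yields the claim. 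There is essentially no genuine obstacle here; the only point that deserves care is the appeal to exact finite rank to ensure the law of $\tau_m$ is entirely carried by $I_{\mu}$—outside this hypothesis one would be left with residual mass on vertices $u'\notin I_{\mu}$ for which $q_{m,n}(u',v)$ is not estimated, although that mass tends to $0$ as $m\to\infty$ by cleanness.
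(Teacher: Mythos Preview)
Your argument is correct and is essentially identical to the paper's own proof: both expand $\mu(\tau_n=v)$ via the law of total probability over $u'\in I_\mu$, rewrite $q_{m,n}(u,v)$ as the same weighted sum of the constant $q_{m,n}(u,v)$, subtract, and bound each term using the preceding lemma together with $\sum_{u'\in I_\mu}\mu(\tau_m=u')=1$. Your closing remark about the non-exact case also matches the paper's comment following the proof.
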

\begin{proof}
Recall we are assuming that the system is of exact finite rank $d$. 
Let $u,v \in I_{\mu}$. We have

\begin{align*}
& |\mu(\tau_{n}=v | \tau_{m}=u)- \mu(\tau_{n}=v)| \\
=&|\mu(\tau_{n}=v | \tau_{m}=u)- \sum_{u'\in I_{\mu}} \mu(\tau_{n}=v | \tau_{m}=u') \mu(\tau_{m}=u')| \\
=&|\sum_{u'\in I_{\mu}} \left ( \mu(\tau_{n}=v | \tau_{m}=u)-(\tau_{n}=v | \tau_{m}=u') \right ) \mu(\tau_{m}=u')|\\
\leq & \sum_{u'\in I_{\mu}} \zeta^{n-m} \mu(\tau_{m}=u')= \zeta^{n-m},
\end{align*}
where we have used that the system is of exact rank and thus
$\sum_{u'\in I_{\mu}} \mu(\tau_{m}=u')=1$.
\end{proof}

\subsubsection{Fundamental random variable $X_{n}$}
Recall we are assuming $I_{\mu}=\{1,\ldots,d\}$. To make levels explicit, depending on the context we will write $V_n$ instead of $I_\mu$ or $\{1,\ldots,d\}$.  

Define for each $n\geq 1$ and $x\in X$: 
${\btheta}_{n}(x)=\theta_{n}(s_{n}(x),\tau_{n}(x),\tau_{n+1}(x))$. Consider the random variable
$X_{n}=\btheta_{n}-\EE_{\mu}(\btheta_{n})$ and decompose it as $X_{n}=Z_{n}+Y_{n}$, where 
$Z_{n}=\btheta_{n}-\EE_{\mu}(\btheta_{n}| {\mathcal T}_{n})$ and 
$Y_{n}=\EE_{\mu}(\btheta_{n}| {\mathcal T}_{n}) - \EE_{\mu}(\btheta_{n})=\EE_{\mu}(X_{n}|{\mathcal T}_{n})$. Recall that $\T_n$ is the sigma algebra generated by the Kakutani-Rohlin partition $\P_{n}$ of level $n$. 
\medskip

We prove the convergence in $L^2 (X,\mu)$ of $\sum_{n\geq 1} Z_{n}$ and $\sum_{n\geq 1} Y_{n}$, and thus of $\sum_{n\geq 1} X_{n}$. 
\medskip

\paragraph{\it $\bullet$ Some preliminary bounds:} notice that 
$$\btheta_{n}=\sum_{u\in V_{n}} \sum_{v\in V_{n+1}} \sum_{s\in \sufijocbb{n}{n+1}{u}{v}} 
\theta_{n}(s,u,v) \, \, {\bf 1}_{\{\tau_{n}=u,\tau_{n+1}=v,s_{n}=s\}}.$$ 
Then, 
\begin{align*}
&\EE_{\mu}(\btheta_{n}| {\mathcal T}_{n})\\
=&\sum_{u\in V_{n}} {\bf 1}_{\{\tau_{n}=u\}} 
\sum_{v\in V_{n+1}} \frac{h_{n}(u)\mu_{n+1}(v)}{\mu(\tau_{n}=u)}
\sum_{s\in \sufijocbb{n}{n+1}{u}{v}} \theta_{n}(s,u,v) \\
=&\sum_{u\in V_{n}} {\bf 1}_{\{\tau_{n}=u\}} 
\sum_{v\in V_{n+1}} \frac{\mu(\tau_{n+1}=v)}{\mu(\tau_{n}=u)} \frac{h_{n}(u)}{h_{n+1}(v)}\sum_{s\in \sufijocbb{n}{n+1}{u}{v}} \theta_{n}(s,u,v) \\
=&\sum_{u\in V_{n}} {\bf 1}_{\{\tau_{n}=u\}} 
\sum_{v\in V_{n+1}} \frac{\mu(\tau_{n+1}=v)}{\mu(\tau_{n}=u)} A_{n+1}(u,v) 
\frac{1}{M_{n+1}(u,v)}\sum_{s\in \sufijocbb{n}{n+1}{u}{v}} \theta_{n}(s,u,v), 
\end{align*}
where $A_{n+1}(u,v)=\frac{h_{n}(u)}{h_{n+1}(v)}\cdot M_{n+1}(u,v)$.
Thus, 
\begin{align*}
& \EE_{\mu}(\btheta_{n})= \EE_{\mu}(\EE_{\mu}(\btheta_{n}| {\mathcal T}_{n})) \\
=&\sum_{u\in V_{n}}  \mu(\tau_{n}=u) 
 \sum_{v\in V_{n+1}}  \frac{\mu(\tau_{n+1}=v)}{\mu(\tau_{n}=u)} A_{n+1}(u,v)  
  \frac{1}{M_{n+1}(u,v)}\sum_{s\in \sufijocbb{n}{n+1}{u}{v}} \theta_{n}(s,u,v)  
  .
\end{align*}
Similarly, 
\begin{align*}
& \EE_{\mu}(\btheta_{n}^{2}| {\mathcal T}_{n}) \\
= & \sum_{u\in V_{n}} {\bf 1}_{\{\tau_{n}=u\}} 
\sum_{v \in V_{n+1}} \frac{\mu(\tau_{n+1}=v)}{\mu(\tau_{n}=u)} A_{n+1}(u,v) 
\frac{1}{M_{n+1}(u,v)}\sum_{s\in \sufijocbb{n}{n+1}{u}{v}} \theta_{n}^{2}(s,u,v), 
\end{align*}
and 
\begin{align*}
& \EE_{\mu}(\btheta_{n}^{2})=\EE_{\mu}(\EE_{\mu}(\btheta_{n}^{2}| {\mathcal T}_{n}))\\
=&\sum_{u\in V_{n}}  \mu(\tau_{n}=u) 
\sum_{v\in V_{n+1}} \frac{\mu(\tau_{n+1}=v)}{\mu(\tau_{n}=u)} A_{n+1}(u,v)  \frac{1}{M_{n+1}(u,v)}\sum_{s\in \sufijocbb{n}{n+1}{u}{v}} \theta_{n}^{2}(s,u,v) .
\end{align*}

Therefore, 
\begin{align*}
|Y_{n}|=& |\EE_{\mu}(X_{n}| {\mathcal T}_{n})| \\
\leq & \sum_{u\in V_{n}} |{\bf 1}_{\{\tau_{n}=u\}}-\mu(\tau_{n}=u)|  \sum_{v\in V_{n+1}} \frac{\mu(\tau_{n+1}=v)}{\mu(\tau_{n}=u)}  A_{n+1}(u,v) \\
& \times \left |   \frac{1}{M_{n+1}(u,v)}\sum_{s\in \sufijocbb{n}{n+1}{u}{v}} \theta_{n}(s,u,v)  \right | \\
\leq &  \sum_{u\in V_{n}} 2 \sum_{v\in V_{n+1}}  \frac{1}{\delta_0} \cdot 1 \cdot \frac{1}{M_{n+1}(u,v)}\sum_{s\in \sufijocbb{n}{n+1}{u}{v}} |\theta_{n}(s,u,v) | \ , \\
\end{align*}
where $\delta_0$ is the constant defining the cleanness property. 
But, from hypothesis \eqref{eq:cnsvpmediblebis}, 
the term in the second sum is bounded by some $\xi_{n}$, where $\sum_{n \geq 1} \xi_{n}^{2} <+\infty$. We conclude that $|Y_{n}| \leq K\xi_{n},$
where $K=2d^{2}/\delta_0$. 

\medskip

Below we will manipulate a lot of constants depending neither on variables nor on indices. We will call them universal constants and denote them also by $K$. 
\medskip

For $Z_n$ a classical computation gives, 
\begin{align*}
\EE_{\mu}(Z^{2}_{n})&= \EE_{\mu}((\btheta_{n}-\EE_{\mu}(\btheta_{n}|{\mathcal T}_{n}))^{2}) =\EE_{\mu}(\btheta_{n}^{2})- \EE_{\mu}(\btheta_{n} \EE_{\mu}(\btheta_{n}|{\mathcal T}_{n})).
\end{align*}
Thus, using a similar argument as in the bound for $|Y_{n}|$, we get
\begin{align*}
\EE_{\mu}(Z^{2}_{n})&\leq \EE_{\mu}(\btheta_{n}^{2}) + |\EE_{\mu}(\btheta_{n} \EE_{\mu}(\btheta_{n}|{\mathcal T}_{n}))| \leq \EE_{\mu}(\btheta_{n}^{2}) + \EE_{\mu}(|\btheta_{n}| |\EE_{\mu}(\btheta_{n}|{\mathcal T}_{n})|) \\
& \leq K_{1} \xi_{n}^{2} + \EE_{\mu}(|\btheta_{n}|) K_{2} \xi_{n} 
\leq K_{1} \xi_{n}^{2} + K_{3}\xi_{n} K_{2} \xi_{n}= K_4 \xi_{n}^2, \\
\end{align*}
for some universal constants $K_{1}, K_{2}, K_{3}, K_4$. Therefore,
$$
\sum_{n\geq 1} \EE_{\mu}(Z^{2}_{n}) \leq K \sum_{n\geq 1} \xi_{n}^{2} < +\infty.
$$ 

\paragraph{\it $\bullet$ The series $\sum_{n\geq 1} Z_n$ converges in $L^2 (X,\mu)$}
Let $1 \leq m<n$. We have
$\EE_{\mu}(Z_{n}|{\mathcal T}_{n})=\EE_{\mu}(\btheta_{n}-\EE_{\mu}(\btheta_{n}|{\mathcal T}_{n})|{\mathcal T}_{n})=0$ and, by definition of $Z_m$, $\EE_{\mu}(Z_m|{\mathcal T}_{m+1})=Z_m$. Then, since ${\mathcal T}_{n}$ is finer than ${\mathcal T}_{m}$, we have that 
$$\EE_{\mu}(Z_{m}Z_{n})=\EE_{\mu}(\EE_{\mu}(Z_{m}Z_{n}|{\mathcal T}_{n}))=\EE_{\mu}(Z_{m}\EE_{\mu}(Z_{n}|{\mathcal T}_{n}))=0.$$
 
This implies that 
$$\langle \sum_{i=m}^{n} Z_{i}, \sum_{j=m}^{n} Z_{j} \rangle = \sum_{i=m}^{n} \sum_{j=m}^{n} \langle Z_{i}, Z_{j} \rangle = \sum_{\ell=m}^{n} \Vert Z_{\ell} \Vert^{2}_{L^2 (X,\mu)} \leq K \sum_{\ell=m}^{n} \xi^{2}_{\ell},$$
and proves that $\sum_{n\geq 1} Z_{n}$ converges in $L^2 (X,\mu)$.
\medskip 

\paragraph{\it $\bullet$ The series $\sum_{n\geq 1} Y_n$ converges in $L^2 (X,\mu)$} 
We follow the scheme given in \cite{necesariasuficiente}. 
We have $Y_{n}=\EE_{\mu}(X_{n}|{\mathcal T}_{n})=\sum_{v \in V_{n}} {\bf 1}_{\{\tau_{n}=v\}} \, y_{n}(v)$, where $y_{n}(v)$ is a constant value. 
Let $0\leq k \leq n$. Then, 
\begin{align*}
\EE_{\mu}(Y_{n}|{\mathcal T}_{n-k})
&=\EE_{\mu}\left(\sum_{v \in V_{n}} {\bf 1}_{\{\tau_{n}=v\}} \, y_n{(v)}\mid{\mathcal T}_{n-k}\right)
=\sum_{v\in V_{n}}\EE_{\mu}({\bf 1}_{\{\tau_{n}=v\}}|{\mathcal T}_{n-k})  \, y_{n}(v)\\
&=\sum_{v\in V_{n}}\sum_{u\in V_{n-k}} {\bf 1}_{\{\tau_{n-k}=u\}} \frac{\int_{\{\tau_{n-k}=u\}} {\bf 1}_{\{\tau_{n}=v \}} d\mu}{\mu(\tau_{n-k}=u)} \, y_{n}(v)\\
&=\sum_{u\in V_{n-k}} {\bf 1}_{\{\tau_{n-k}=u\}} \sum_{v\in V_{n}} \mu(\tau_{n}=v|\tau_{n-k}=u) y_{n}(v) . \\
\end{align*}

We have $\EE_{\mu}(Y_{n})=\EE_{\mu}(X_{n})=\EE_{\mu}(\btheta_{n}-\EE_{\mu}(\btheta_{n}))=0$. So, 
$\sum_{v\in V_{n}} \mu(\tau_{n}=v) y_{n}(v)=0$ and thus
$$\EE_{\mu}(Y_{n}|{\mathcal T}_{n-k})=\sum_{u\in V_{n-k}} {\bf 1}_{\{\tau_{n-k}=u\}} 
 \sum_{v\in V_{n}} (\mu(\tau_{n}=v|\tau_{n-k}=u) -\mu(\tau_{n}=v)) y_{n}(v)  .$$
From this expression and Lemma \ref{lem:geometricdeacrieasing} we get
\begin{align*}
|\EE_{\mu}(Y_{n}|{\mathcal T}_{n-k})| 
&\leq  \sum_{u\in V_{n-k}} \sum_{v\in V_{n}} |\mu(\tau_{n}=v|\tau_{n-k}=u) -\mu(\tau_{n}=v)| |y_{n}(v)|  
\leq  K' \zeta^{k} \xi_{n},
\end{align*} 
where $K$ is a universal constant and we have used that $|Y_{n}|\leq K \xi_{n}$.
From here we deduce that 
\begin{align*}
|\EE_{\mu}(Y_{n} Y_{n-k})| = &
|\EE_{\mu}(\EE_{\mu}(Y_{n} Y_{n-k}|{\mathcal T}_{n-k}))| 
=|\EE_{\mu}( Y_{n-k} \EE_{\mu}(Y_{n} |{\mathcal T}_{n-k}))| \\
\leq & K \zeta^{k} \xi_{n} K \xi_{n-k}=K \zeta^{k} \xi_{n} \xi_{n-k}.
\end{align*}

From previous discussions and formulas we get,
\begin{align*}
\langle \sum_{i=m}^{n} Y_{i}, \sum_{j=m}^{n} Y_{j}\rangle 
&= \sum_{i=m}^{n} \sum_{j=m}^{n} \langle Y_{i}, Y_{j} \rangle 
\leq K \sum_{i=m}^{n} \sum_{j=m}^{n} \zeta^{|i-j|} \xi_{i} \xi_{j} 
\leq K \sum_{k=0}^{n-m} \zeta^{k} \sum_{l=m}^{n-k} \xi_{l} \xi_{l+k} \\
&\leq K \sum_{k=0}^{n-m} \zeta^{k} \sum_{l=m}^{n} \xi_{l}^{2} 
\leq K \frac{\zeta^{n-m+1}-1}{\zeta-1} \sum_{l=m}^{n} \xi_{l}^{2} . \\
\end{align*}
This implies that $\EE_{\mu}((\sum_{l=m}^{n}Y_{l})^{2})\leq K \sum_{l=m}^{n} \xi_{l}^{2}$. So 
$\sum_{n\geq 1} Y_{n}$ converges in  $L^2 (X,\mu)$.
\medskip 

Finally, the conclusion from the last two computations is that  $\sum_{n\geq 1} X_{n}$ converges in  $L^2 (X,\mu)$ too.

\subsubsection{End of the proof of Theorem \ref{theo:cnsvpmedible}: construction of an eigenfunction for $\lambda$} From previous discussion we get that 
$f_n=\exp(2i\pi  \sum_{k=1}^{n-1} X_k)$ converges in $L^2 (X,\mu)$ to some function $f$.
For $n\geq 1$ and $x \in X$ we have 
\begin{align*}
&f_n(Tx)/f_n(x) \\ 
=&\exp 
\left(
2i\pi  
\sum_{k=1}^{n-1} \theta_k(s_k(Tx),\tau_k(Tx),\tau_{k+1}(Tx)) -\theta_k(s_k(x),\tau_k(x),\tau_{k+1}(x))  
\right) \\
=&\exp 
\left(
2i\pi \alpha \sum_{k=1}^{n-1} (\langle s_k(Tx),h_k \rangle -  \langle s_k(x),h_k \rangle  
\right.\\ 
& \left.
- 2i\pi \alpha \sum_{k=1}^{n-1} (\rho_{k+1}(\tau_{k+1}(Tx))-\rho_{k+1}(\tau_{k+1}(x))-\rho_k(\tau_k(Tx))+\rho_k(\tau_k(x)) ) 
\right) \\
= &\exp 
( 
2i\pi \alpha \sum_{k=1}^{n-1} (\langle s_k(Tx),h_k \rangle - 
\langle s_k(x),h_k \rangle  \\
& - 2i\pi \alpha ( \rho_{n}(\tau_{n}(Tx))-\rho_{n}(\tau_{n}(x))-\rho_1(\tau_1(Tx))+\rho_1(\tau_1(x))) ) \\
=&\exp (2i\pi \alpha ( r_n(Tx)-r_n(x) -\rho_{n}(\tau_{n}(Tx))+\rho_{n}(\tau_{n}(x)))), 
\end{align*}
where in the last equality we have assumed without loss of generality that $\rho_1$ is a constant function. 

Now, if $x$ is not in the base $\bigcup_{v\in V_n} B_n(v)$ of level $n$, then 
$r_n(Tx)-r_n(x)=-1$ and $\tau_{n}(Tx)=\tau_{n}(x)$. 
Thus, in this case, $f_n(Tx)/f_n(x)=\lambda^{-1}$. 
Since $\lim_{n\to +\infty}$ $\mu(\bigcup_{v\in V_n} B_n(v))=0$ we conclude that 
$f\circ T=\lambda^{-1} f$ in $L^2 (X,\mu)$. This proves that condition \eqref{eq:cnsvpmedible} is a sufficient condition for $\lambda$ to be an eigenvalue for $\mu$.
$\square$

\section{Examples and Applications} 
\label{examples}

In this section we illustrate the use of the main theorems of this article presenting two examples and two applications. These examples and applications show firstly how we can handle the combinatorial objects that appear in the formulation of the main theorems, and secondly they show how these theorems can solve some precise questions in the theory of minimal Cantor systems concerning eigenvalues. In particular, questions that relate the study of eigenvalues with strong orbit equivalence and dimension groups theory.

\subsection{Eigenvalues of minimal Cantor systems of Toeplitz-type}\label{sub sub:toeplitz}

A minimal Cantor system $(X,T)$ is said to be of \emph{Toeplitz-type} if it has a Bratteli-Vershik representation 
$B=\left( V,E,\preceq \right)$ satisfying the \emph{equal path number} property. That is, the number of edges ending at some fixed vertex is constant within the respective level: for any $n\geq 1$, $\#\set{e\in E_n\talque \range(e)=u}$ is a positive integer independent of $u\in V_{n}$. This integer will be denoted by $q_n$ and $(q_n\talque n\geq 1)$ is called \emph{the characteristic sequence} of the system. We also define the quantities $p_n=q_1q_2\cdots q_n$ for $n\geq 1$ and $q_{m,n}=q_{m+1}q_{m+2}\cdots q_{n}$ for $1 \leq m < n$. We notice that for $1 \leq m < n$, 
$$
h_m(u)/h_n(v)=1/q_{m,n} \; \text{  for all  } u\in V_m \text{  and  } v\in V_n.
$$

It is easy to show that odometers (\emph{i.e}, equicontinuous minimal Cantor systems) have a Bratteli-Vershik representation of Toeplitz-type. Also, every Toeplitz subshift can be represented by a Bratteli-Vershik system of Toeplitz-type \cite{gjtoeplitz}.

Let $(X,T)$ be a minimal Cantor system of Toeplitz-type and finite rank $d$ given by the Bratteli-Vershik representation $B=\left( V,E,\preceq \right)$ satisfying the equal path number property. As in  \cite{rangofinito} and \cite{dfm}, in this case we let $\overline{s}_{m,n}(x)$ stand for $\la s_{m,n}(x), h_1\ra$ for $x\in X$ and $1 \leq m < n$. We have $\la s_{m,n}(x), h_n\ra= p_n\overline{s}_{m,n}(x)$, and that for each $n\geq 1$ the function $\overline{s}_{n}=\overline{s}_{n,n+1}$ takes all the values between $0$ and $(q_{n+1}-1)$. We also define the set $\overline{S}_{m,n}(u,v)=\set{\overline{s}_{m,n}(x)\talque x\in X, \tau_m(x)=u, \tau_{n}(x)=v}$.

\subsubsection{Continuous eigenvalues of $(X,T)$}
The following characterization for continuous eigenvalues of a Toeplitz subshift is well-known (see \cite{Keane,Williams} or Theorem 25 in \cite{rangofinito}). Here we provide a proof in the context of minimal Cantor systems of Toeplitz-type to emphasize the use of Theorem \ref{theo:nec-suff-cont}. 

\begin{theo}
The complex number $\exp(2 i \pi \alpha)$ is a continuous eigenvalue of $(X,T)$ if and only if $\alpha = a/p_N$, for some $a\in\ZZ$ and $N\geq 1$.
\end{theo}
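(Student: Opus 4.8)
The plan is to read the criterion of Theorem~\ref{theo:nec-suff-cont}(2) through the Toeplitz structure and then analyse the resulting scalar series. Since the representation is proper, hypothesis (H1) forces $q_1=1$, and an immediate induction using the equal path number property gives $h_n=(p_n,\dots,p_n)$; hence $\langle s_n(x),\alpha h_n\rangle=\alpha p_n\overline{s}_n(x)$. As $\overline{s}_n=\overline{s}_{n,n+1}$ runs exactly through $\{0,1,\dots,q_{n+1}-1\}$ when $x$ ranges over $X$, condition (2) of Theorem~\ref{theo:nec-suff-cont} becomes
\[
\sum_{n\ge 1}\ \max_{0\le j<q_{n+1}}\vvert j\,\alpha p_n\vvert<+\infty .
\]
So the whole statement reduces to showing that this series converges if and only if $\alpha=a/p_N$ for some $a\in\ZZ$ and $N\ge 1$. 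I would also record at the outset that, after telescoping, we may assume $q_n\ge 2$ for all $n\ge 2$ (trivial levels $q_n=1$ do not alter the set $\{p_N;N\ge1\}$), so that the index $j=1$ is always admissible and $p_n\to+\infty$.

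The sufficiency is the easy direction. If $\alpha=a/p_N$ then for every $n\ge N$ the number $\alpha p_n=a\,p_n/p_N=a\,q_{N+1}\cdots q_n$ is an integer, whence $\vvert j\,\alpha p_n\vvert=0$ for all $j$. Thus all but the finitely many terms with $n<N$ vanish and the series converges; by Theorem~\ref{theo:nec-suff-cont} (or Remark~\ref{rmrk:cond_suf_sin_simple}) $\lambda=\exp(2i\pi\alpha)$ is a continuous eigenvalue.

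The necessity is where the real work lies. Writing $\theta_n=\vvert\alpha p_n\vvert$ and $T_n=\max_{0\le j<q_{n+1}}\vvert j\,\alpha p_n\vvert$, convergence forces $T_n\to 0$, so fix $N_0$ with $T_n<1/4$ for $n\ge N_0$. The key dichotomy I would prove is that, for such $n$, one necessarily has $q_{n+1}\theta_n\le 1/2$: indeed $\theta_n\le T_n<1/4$ (take $j=1$), and if $q_{n+1}\theta_n>1/2$ then the multiples $j\theta_n$ with $0\le j<q_{n+1}$ start at $0$, increase by steps $\theta_n<1/4$ and exceed $(q_{n+1}-1)\theta_n>1/4$, so some $j_0\theta_n$ lands in $[1/4,1/2)$, giving $T_n\ge 1/4$, a contradiction. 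From $q_{n+1}\theta_n\le 1/2$ one obtains the exact recursion $\theta_{n+1}=\vvert q_{n+1}\,\alpha p_n\vvert=q_{n+1}\theta_n$ for all $n\ge N_0$. If $\theta_{N_0}>0$, then since $q_n\ge 2$ this recursion makes $\theta_n$ nondecreasing and in fact grow geometrically, so at some first $n^{*}\ge N_0$ it must leave the regime $q_{n^{*}+1}\theta_{n^{*}}\le 1/2$, forcing $T_{n^{*}}\ge1/4$ and contradicting the choice of $N_0$ (equivalently, $\theta_n\le 1/2$ cannot sustain the geometric growth, nor can $T_n\ge\theta_n\ge\theta_{N_0}>0$ coexist with $T_n\to0$). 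Hence $\theta_{N_0}=0$, i.e. $\alpha p_{N_0}\in\ZZ$, which is exactly $\alpha=a/p_{N_0}$ with $a=\alpha p_{N_0}$, and we take $N=N_0$.

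I expect the main obstacle to be precisely this necessity direction, and within it the elementary but slightly delicate ``no-jump'' estimate showing $q_{n+1}\theta_n\le 1/2$: this is what makes the map $\theta_n\mapsto\theta_{n+1}=q_{n+1}\theta_n$ rigid (a genuine multiplication) rather than wrapping around modulo one, after which the incompatibility of geometric growth with the bound $\theta_n\le 1/2$ closes the argument. A minor point to dispatch cleanly beforehand is the telescoping reduction to $q_n\ge 2$, which legitimises the use of the index $j=1$ and guarantees $p_n\to+\infty$.
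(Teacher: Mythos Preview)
Your proof is correct. The sufficiency direction coincides with the paper's, but for the necessity direction you take a genuinely different route.

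The paper argues necessity via the weaker necessary condition from Theorem~\ref{th:necessary-continuous} together with the measure identity~\eqref{eq:eta2}: writing $\alpha h_n=\eta_n+\nu_n$ with $\eta_n\to 0$, the Toeplitz structure forces $h_n=p_n(1,\dots,1)$, hence $\eta_n=\bar\eta_n(1,\dots,1)$; then $\eta_n\cdot\mu_n=0$ and $(1,\dots,1)\cdot\mu_n>0$ give $\bar\eta_n=0$, so $\alpha p_n\in\ZZ$ for large $n$. This is a two-line algebraic argument that never looks at the order or the suffix vectors. You instead apply the full criterion of Theorem~\ref{theo:nec-suff-cont} and run a purely arithmetic recursion on $\theta_n=\vvert\alpha p_n\vvert$: the ``no-jump'' estimate $q_{n+1}\theta_n\le 1/2$ yields the exact relation $\theta_{n+1}=q_{n+1}\theta_n$, whose geometric growth is incompatible with $\theta_n\le 1/2$ unless $\theta_{N_0}=0$. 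Your approach is measure-free and entirely elementary, and it illustrates concretely how the order-dependent condition (2) of Theorem~\ref{theo:nec-suff-cont} operates in this setting; the paper's approach is shorter and shows that, in the Toeplitz case, the order plays no role whatsoever in determining continuous eigenvalues, since only the coarser condition~\eqref{eq:eta} is used. Your telescoping to $q_n\ge 2$ is harmless: the resulting $p_N$'s form a cofinal subsequence of the original ones, so the condition $\alpha\in\bigcup_N \tfrac{1}{p_N}\ZZ$ is unchanged.
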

\begin{proof}
Let $\exp(2i \pi \alpha)$ be a continuous eigenvalue of $(X,T)$ with $\alpha\in [0,1)$. Then, by \eqref{eq:eta}, for any $n\geq 1$, $\alpha h_n= \alpha p_n (1,\ldots,1)=\eta_n + \nu_n$
with $\eta_n \to 0$ as $n\to + \infty$ and $\nu_n$ an integer vector. As remarked before, from \eqref{eq:eta1} we deduce that $ \eta_n \cdot \mu_n =0$ for any large integer $n$. 

Now, since for all large integer $n$ we have $h_n=p_n (1,\ldots,1)$, then $\eta_n= \bar\eta_n(1,\ldots,1)$ and $\nu_n=\bar\nu_n (1,\ldots,1)$ for some real numbers $\bar\eta_n$ and $\bar\nu_n$. Hence, 
$ \eta_n \cdot \mu_n =\bar\eta_n  (1,\ldots,1) \cdot \mu_n =0$. This implies that $\bar\eta_n=0$ and thus $\alpha= \bar\nu_n/p_n$ for all large enough integer $n$. So $\alpha$ has the desired form.  

On the other hand, if $\alpha=a/p_N$ for some $a\in\ZZ$ and $N\geq 1$, then $\vvert \alpha p_n\overline{s}_n(x)\vvert=0$ for all $n>N$ and $x\in X$, and therefore condition (2) of Theorem \ref{theo:nec-suff-cont} holds. 
\end{proof}
 
\subsubsection{Non continuous eigenvalues  of $(X,T)$}
Let $\mu$ be an ergodic measure of $(X,T)$. 
Using the notation established for minimal Cantor systems of Toeplitz-type, from Theorem \ref{theo:cns_sinrho} we get the following result proved in 
\cite[Corollary 5]{dfm}.

\begin{theo}\label{theo:cns_toeplitz_new}
The complex number $\lambda=\exp(2i\pi \alpha)$ is an eigenvalue of $(X,T)$ for $\mu$ if and only if
\begin{equation}\label{eq:cns_sinrho_toeplitz}
\sum_{u\in I_{\mu}}\frac{1}{q_{m,n}}\Bigg|\sum_{s\in\overline{S}_{m,n}(u,v)}\!\!\!\lambda^{p_ms}\Bigg|\xrightarrow[m\to+\infty]{} 1, \; \text{for all } v\in I_{\mu},
\end{equation}
uniformly for $n>m$. 
If in addition $\alpha = a/b$, with $(a,b)=1$ and $b/(b,p_n)>1$ for all large enough $n$, then $\lambda$ is a non continuous eigenvalue. 
\end{theo}

From \cite[\S 7]{rangofinito} we know that an eigenvalue $\lambda=\exp(2i\pi \alpha)$ of $(X,T)$ for $\mu$ is necessarily rational, {\it i.e.}, $\alpha=a/b$ with $(a,b)=1$. So the condition of last theorem only needs to be verified for rational angles. Also, 
it is interesting to notice that the same condition implies that 
$b/(b,p_n)\leq d$ for all large enough $n$ (this follows from the proof of Lemma 13(1) of \cite{dfm}), which limits the possibility of having non continuos eigenvalues. 

\subsection{Continuous eigenvalues and strong orbit equivalence}
\label{sec:modifying}

As can be seen in \cite{lr} and \cite{necesariasuficiente}, in the linearly recurrent case the local orders of Bratteli-Vershik representations do not play any role in the existence of continuous and measurable eigenvalues. In other words, if we take two Cantor minimal systems and their respective linearly recurrent Bratteli-Vershik representations only differ in their local orders, then both systems have the same continuous and non continuous eigenvalues. This is also the case for continuous eigenvalues that are roots of unity in any minimal Cantor system (see Corollary \ref{coro:rootsofunity}), {\it i.e.}, the group of them do not change if we modify the local orders of a given Bratteli-Vershik representation. 

The case of irrational continuous eigenvalues ({\it i.e.}, continuous eigenvalues that are not roots of unity) is more complicated and here we use Theorem \ref{theo:nec-suff-cont} to show how some modifications of local orders can either leave invariant or modify significantly the group of continuous (irrational) eigenvalues. First we propose a type of controlled modifications of the local orders which do not alter the group of continuous eigenvalues. Then we show that it is possible to change the local orders of a proper Bratteli-Vershik representation of a minimal Cantor system to produce a strong orbit equivalent system without irrational continuous eigenvalues which keeps all measure theoretical eigenvalues for any ergodic measure of the system.

\smallskip

Let $(\omega_n;n\geq 2)$ be a sequence of positive integers. A $(\omega_n;n\geq 2)$-order modification of a Bratteli-Vershik representation $B=\left( V,E,\preceq \right)$ is a new Bratteli-Vershik representation $B'=\left( V,E,\preceq ' \right)$ (only local orders are modified) such that for all $n\geq 2$ and $u\in V_n$ the local orders $\preceq$ and $\preceq'$ at edges with range $u$ differ at most $w_n$ times. We say this modification is proper if $B'$ is proper. 

\begin{coro}
\label{coro:necmodifying}
Let $(X,T)$ be a minimal Cantor system given
by a proper Bratteli-Vershik representation $B=\left( V,E,\preceq \right)$ and let
$\lambda=\exp(2i\pi \alpha)$ be a continuous eigenvalue of $(X,T)$. Let
$(\omega_n;n\geq 2)$ be a sequence of positive integers such that $\sum_{n\geq 2} \omega_{n+1} \vvert \alpha h_n \vvert < +\infty$.
Then, $\lambda$ is a continuous eigenvalue of any minimal Cantor system represented by a proper $(\omega_n;n\geq 2)$-order modification of $B$.
\end{coro}

\begin{proof}
Let $B'$ be a proper $(\omega_n;n\geq 2)$-order modification of $B$.
For each $n\geq 1$ and $x \in X$, let $s_n(x), s'_n(x)\in  \NN^{V_n}$ be the suffix vectors of $x \in X$ associated to diagrams $B$ and $B'$ respectively. Then, $s_n(x) = s'_n(x) + \Delta_{n+1}(x)$, where the error term satisfies $|\Delta_{n+1}(x)| \leq \omega_{n+1}$. Moreover, 
\begin{align*}
\vvert\langle  s'_n(x) ,\alpha h_n  \rangle\vvert & \leq \vvert\langle  s_n(x) ,\alpha h_n  \rangle \vvert +  \vvert\langle  \Delta_{n+1} ,\alpha h_n  \rangle \vvert 
\leq \vvert\langle  s_n(x) ,\alpha h_n  \rangle \vvert +  \omega_{n+1}\vvert  \alpha h_n  \vvert  .
\end{align*}

By Theorem \ref{theo:nec-suff-cont} one has that $\sum_{n\ge 1}  \max_{x\in X} \vvert\langle  s_n(x) ,\alpha h_n  \rangle\vvert  <+\infty$ and by hypothesis $\sum_{n\geq 2} \omega_{n+1} \vvert \alpha h_n \vvert <+\infty$, then $\sum_{n\ge 1}  \max_{x\in X} \vvert\langle  s'_n(x) ,\alpha h_n  \rangle\vvert  <+\infty$. The result follows from condition (2) of Theorem \ref{theo:nec-suff-cont}.
\end{proof}

Before continuing the discussion let us recall the notions of orbit and strong orbit equivalence. Two dynamical systems $(X,T)$ and $(\tilde X,\tilde T)$ are {\em orbit equivalent} if there exists a homeomorphism $\phi : X \to \tilde X$ sending orbits to orbits: for all $x\in X$, 
$$
\phi \left(\{ T^n x ;  n\in \mathbb{Z} \} \right) = \{ \tilde T^n \phi (x) ;  n\in \mathbb{Z} \}.
$$
This induces the existence of maps $\vartheta : X\to \mathbb{Z}$ and $\kappa : X \to \mathbb{Z}$ satisfying for all $x\in X$,
$
\phi \circ T (x) = \tilde T^{\vartheta (x)}\circ \phi (x) \hbox{ and } \phi \circ T^{\kappa (x)} (x) = \tilde T\circ \phi (x).
$

If there exists $\phi$ as above such that its associated maps $\vartheta$ and $\kappa$ have both at most one point of discontinuity we say that $(X,T)$ and $(\tilde X,\tilde T)$ are {\em strong orbit equivalent}. If $(X,T)$ and $(\tilde X,\tilde T)$ are orbit equivalent (and in particular strong orbit equivalent), there exists an affine isomorphism between their sets of invariant probability measures (see \cite[Theorem~2.2]{gps}). If $\mu$ is an invariant probability measure of $(X,T)$ we call $\tilde\mu$ the corresponding invariant probability measure of $(\tilde X,\tilde T)$ given by this isomorphism.

From the viewpoint of Bratteli-Vershik representations, two Cantor minimal systems $(X,T)$ and $(\tilde X,\tilde T)$ are strong orbit equivalent if and only if they have corresponding Bratteli-Vershik representations $B = (V,E,\preceq)$ and $\tilde B = (\tilde V,\tilde E,\tilde \preceq)$ each of them differing only in its local orders with some (maybe different in each case) telescoping of a third Bratteli-Vershik representation $\hat B = (\hat V,\hat E,\hat \preceq)$ (see \cite[Theorem~1.1]{GWorbitequiv}). In particular two proper Bratteli-Vershik representations differing only in their local orders are strong orbit equivalent.

In the context of our discussion, it is known (see for example \cite[Theorem~2.2]{ormes}) that continuous eigenvalues that are roots of unity are preserved by strong orbit equivalence (an alternative proof of this is obtained by a direct application of Corollary \ref{coro:rootsofunity}). This is not the case for irrational continuous eigenvalues. In the next result we use Theorem \ref{theo:nec-suff-cont} to prove that the local orders of a proper Bratteli-Vershik representation of a minimal Cantor system can be modified in order to lose all their irrational continuous eigenvalues. At the same time, the resulting strong orbit equivalent system preserves the measure theoretical eigenvalues of the original one. This result complements the representation result of N. Ormes \cite[Theorem~6.1]{ormes} that is used to prove that strong orbit equivalence of minimal Cantor systems is compatible with any group of eigenvalues as long as they have the same continuous eigenvalues that are roots of unity. For some deeper discussions and recent results on the relation of continuous eigenvalues with orbit equivalence see \cite{Cortez&Durand&Petite:2016,GHH}.  

\begin{coro}
\label{coro:erasingcontinuous}
Let $(X,T)$ be a minimal Cantor system given by a proper Bratteli-Vershik representation $B=\left( V,E,\preceq \right)$. There exists a minimal Cantor system $(\tilde X,\tilde T)$ obtained by contractions and modifications of the local orders of $B$ (so it is strong orbit equivalent with $(X,T)$) such that
\begin{itemize}
\item[(1)] it has not irrational continuous eigenvalues,
\item[(2)] for every ergodic probability measure $\mu$ of $(X,T)$, the systems $(X,T, \mu)$ and $(\tilde X,\tilde T, \tilde \mu)$ have the same measure theoretical eigenvalues.
\end{itemize}
In particular, if the original system has no roots of unity (except the trivial one) as continuous eigenvalues, then the resulting system is topologically weakly mixing.
\end{coro}
The last statement involving systems with no (non trivial) roots of unity as eigenvalues is also a consequence of \cite[Theorem~4.4]{GHH} whose proof also consists in making precise contractions and choices for the local orders of a Bratteli diagram. From \cite[Theorem 6.1]{ormes} it can be obtained a similar result: a Cantor minimal system with no non trivial roots of unity as eigenvalues has a strong orbit equivalent system which is weak mixing in the measure theoretical way (with respect to to some given measure). As we will see below in the proof of Corollary \ref{coro:erasingcontinuous}, the contractions and modifications on the local orders presented here allow us to start with any group of continuous eigenvalues and keep control over all the measure theoretical ones.

\begin{proof}
We identify $(X,T)$ with the system given by a proper Bratteli-Vershik representation $B=\left( V,E,\preceq \right)$.
Recall that for all $n\geq 1$ we write $V_n=\{1,\ldots, d_n\}$ and we assume that all maximal edges for the local orders start at vertex $d_n$.  
\smallskip

\noindent {\it 1. Defining a strong orbit equivalent system:}
This kind of construction appears in the context of tiling systems in \cite{sadunpriebe}.

Assume we have a strictly increasing sequence of non negative integers $(\ell_n; n\geq 0)$ with $l_0=0$ and $l_1=1$. 
We use it to define the minimal Cantor system $(\tilde X,\tilde T)$ by telescoping $B$ at levels 
$(\ell_{2n-1}; n\geq 1)$ and then by changing the local orders of the derived system as described in the next paragraph. We call $\tilde B=(\tilde V,\tilde E,\tilde \preceq)$ the resulting diagram. 
The first level of $\tilde B$ coincides with the one of $B$ and for all $n\geq 2$ we have $\tilde V_n=V_{\ell_{2n-1}}$ and $\tilde E_n=E_{\ell_{2n-3},\ell_{2n-1}}$. To make the difference with the diagram $B$ all combinatorial objects associated to the diagram $\tilde B$ will be marked with a tilde. For instance: $\tilde M_n$, $\tilde h_n$, $\tilde d_n$, $\tilde r_n(\cdot)$, $\tilde s_{m,n}(\cdot)$, $\tilde S_{m,n}(\cdot,\cdot)$, etc. 

For all $n\geq 2$ we modify the local orders induced on $\tilde E_{n}=E_{\ell_{2n-3},\ell_{2n-1}}$ after telescoping as follows. For all $v \in \tilde V_n$ let $e_v \in E_{\ell_{2n-2},\ell_{2n-1}}$ be the maximal path in $B$ going from $d_{\ell_{2n-2}}$ to $v$. We reorder only those edges in $\tilde E_n$ finishing in vertex $v$ that come from a path in $B$ having $e_v$ as a sub-path. We do this from left to right: the new order puts first the paths starting in vertex $1\in \tilde V_{n-1}$ and then those starting in vertex $2\in \tilde V_{n-1}$, etc., until arriving to edges starting in vertex $\tilde d_{n-1}=d_{\ell_{2n-3}}$. We keep the local orders induced by the telescoping in the remaining edges of $\tilde E_{n}$. 

Call $(\tilde X,\tilde T)$ the minimal Cantor system associated to $\tilde B$. By construction, $(\tilde X,\tilde T)$ is strong orbit equivalent with $(X,T)$. 
\smallskip

\noindent {\it 2. Candidates to be irrational continuous eigenvalues:}
Fix an ergodic probability measure $\mu^{(0)}$ of $(X,T)$. Consider the countable set $\mathcal{A}$ of irrational real numbers $\alpha$ such that there exist an integer $N\geq 1$ and an integer (row) vector $\nu \in \ZZ^{V_N}$ such that $\alpha=\nu \cdot \mu^{(0)}_N$ and that for all $m\geq 1$ they have the decomposition $\alpha h_m=\eta_m+\nu_m$, with $\eta_m$ a real vector and $\nu_m$ an integer vector such that the sequence $(\eta_m;m\geq 1)$ converges to $0$ on a subsequence (but it is never equal to $0$ due to the irrationality of $\alpha$). By \eqref{eq:eta} and \eqref{eq:eta2}, $\mathcal{A}$ contains all real numbers $\alpha$ such that $\lambda=\exp(2i\pi \alpha)$ is an irrational continuous eigenvalue of $(X,T)$. Moreover, since the definition of the set $\mathcal{A}$ is independent of the local orders of the Bratteli-Vershik representation $B$, then $\mathcal{A}$ also contains all  
the real numbers $\alpha$ such that $\lambda=\exp(2i\pi \alpha)$ is an irrational continuous eigenvalue of any minimal Cantor system derived from $B$ by a telescoping followed by a modification of the derived local orders. 

Let $(\alpha_n;n\geq 1)$ be a sequence in $\mathcal{A}$ such that each element of $\mathcal{A}$ appears infinitely many times. Put $\lambda_n=\exp(2i\pi\alpha_n)$.
By definition of $\mathcal{A}$, for every $m\geq 1$ we can write 
$\alpha_n h_m=\eta^{(n)}_m+\nu^{(n)}_m$ with $\eta^{(n)}_m$ a real vector and $\nu^{(n)}_m$ an integer vector such that the sequence $(\eta^{(n)}_m;m\geq 1)$ converges to $0$ on a subsequence and is never equal to $0$.
\smallskip

\noindent {\it 3. Fixing the sequence $(\ell_n; n\geq 0)$:}
We define recursively the sequence $(\ell_n; n\geq 0)$ satisfying the following conditions:
\begin{enumerate}[(a)]
\item\label{eq:firstlevels} $\ell_0=0$ and $\ell_1=1$;  

\item\label{eq:oddlevels1} $h_{\ell_{2n-1}}(v) \geq n$ for all $n\geq 2$ and $v \in V_{\ell_{2n-1}}$;

\item\label{eq:oddlevels2} $\matrizp{\ell_{2n-2}}{\ell_{2n-1}}{d_{\ell_{2n-2}}}{v} > n^2$ for all $n\geq 2$ and $v\in V_{\ell_{2n-1}}$;

\item\label{eq:oddlevels3} $|\eta^{(n+1)}_{\ell_{2n-1}}| < 1/4$ for all $n\geq 2$;

\item\label{eq:evenlevels} for all $n\geq 2$,
$$
\frac{1}{\left\vert \eta^{(n)}_{\ell_{2n-3}}(d_{\ell_{2n-3}})\right\vert} < P_{\ell_{2n-3},\ell_{2n-2}}(d_{\ell_{2n-3}},d_{\ell_{2n-2}})-1.
$$
\end{enumerate}
Notice that condition \eqref{eq:evenlevels} is possible since we do not allow $\mathcal{A}$ to have rational numbers.

Let $n\geq 2$ and assume that $\ell_0,\ldots,\ell_{2n-3}$ are already defined and satisfies conditions \eqref{eq:firstlevels}-\eqref{eq:evenlevels}. First we take $\ell_{2n-2}$  enough large such that 
\eqref{eq:evenlevels} holds. Now that we have defined $\ell_{2n-2}$ it is enough to take $\ell_{2n-1}$ enough large to satisfy conditions \eqref{eq:oddlevels1}, \eqref{eq:oddlevels2} and \eqref{eq:oddlevels3}. This last property can be achieved by the choice of $\alpha_{n+1}$. 
\smallskip

\noindent {\it 4. $(\tilde X,\tilde T)$ satisfies condition (2):}

Let us consider an ergodic probability measure $\mu$ of $(X,T)$. For $n\geq 2$ let $D_{n}$ be the set of points in $\tilde X$ passing through edges of $\tilde E_n$ which were derived from paths in $E_{\ell_{2n-3},\ell_{2n-1}}$ containing a maximal sub-path $e_v$ for some $v \in V_{\ell_{2n-1}}$. We have 
$$\tilde\mu(D_{n})=\sum_{v \in V_{\ell_{2n-1}}} \mu_{\ell_{2n-1}}(v) \ h_{\ell_{2n-2}}(d_{\ell_{2n-2}}).$$  
Then, by \eqref{eq:measure} and condition \eqref{eq:oddlevels2} above, we get
\begin{align*}
1 & \geq h_{\ell_{2n-2}}(d_{\ell_{2n-2}})\mu_{\ell_{2n-2}}(d_{\ell_{2n-2}}) \\
 &= h_{\ell_{2n-2}}(d_{\ell_{2n-2}}) \sum_{v \in V_{\ell_{2n-1}}} \matrizp{\ell_{2n-2}}{\ell_{2n-1}}{d_{\ell_{2n-2}}}{v} \mu_{\ell_{2n-1}}(v) > \tilde\mu\left(D_n\right)n^2. 
\end{align*}
Therefore, $\tilde\mu\left(D_n\right)<1/n^2$ and by Borel-Cantelli $\tilde\mu(\displaystyle\limsup_{n\to +\infty}D_{n})=0$.

Let $\displaystyle \tilde x\not \in \limsup_{n\to +\infty}D_{n}$. Then, from a level $n(\tilde x) \geq 2$, $\tilde x$ does not pass by any edge of $\tilde E_n$ which comes from a path in $E_{\ell_{2n-3},\ell_{2n-1}}$ containing a maximal sub-path $e_v \in E_{\ell_{2n-2, 2n-1}}$. This implies that, if we identify $B$ with its telescoping with respect to levels $(\ell_{2n-1};n\geq 1)$, the suffix vectors associated to $\tilde x$ in $\tilde B$ differ in finitely many levels with those associated to $\tilde x$ when seen as a point in $B$. Thus, keeping this identification for $B$, we get that for all large enough values of $m$ we have $\tilde r_m(\tilde x) = r_m(\tilde x) + c(\tilde x)$, where $c(\tilde x)$ is a constant depending only on $\tilde x$. By Theorem \ref{theo:cns_old_rho} we conclude that eigenvalues of $(X,T)$ for $\mu$ coincide with eigenvalues of $(\tilde X,\tilde T)$ for $\tilde \mu$. 
\smallskip

\noindent {\it 5. The $\lambda_n$'s are not continuous eigenvalues of $(\tilde X,\tilde T)$:}
We will use Theorem \ref{theo:nec-suff-cont} part (3). 

Let $n\geq 3$. A careful inspection of the diagram $\tilde B$ shows that for any $v\in \tilde V_{n}=V_{\ell_{2n-1}}$ we have
$$\set{(0,0,\dots, 0,t)\in \NN^{\tilde V_{n-1}}; t < \matrizp{\ell_{2n-3}}{\ell_{2n-2}}{d_{\ell_{2n-3}}}{d_{\ell_{2n-2}}}} \subseteq \tilde S_{n-1}(\tilde d_{n-1},v).$$
By property \eqref{eq:evenlevels} above we can take $t=\ds \left \lfloor \frac{1}{2 \left\vert\eta^{(n)}_{\ell_{2n-3}}(d_{\ell_{2n-3}})\right\vert}\right \rfloor +1$ in the previous set. Thus, using the fact that $|\eta^{(n)}_{\ell_{2n-3}}| < 1/4$ 
we get  

\begin{align*}
\vvert \alpha_n \left\langle (0,\ldots, 0,t),  \tilde h_{n-1} \right\rangle \vvert &=  
\vvert \alpha_n \left\langle (0,\ldots, 0,t), h_{\ell_{2n-3}} \right\rangle \vvert \\ &=
\vvert t \cdot \eta^{(n)}_{\ell_{2n-3}}(d_{\ell_{2n-3}})  \vvert \in \left]1/4,1/2\right].
\end{align*}
Since each value $\alpha$ in the sequence $(\alpha_n;n\geq 1)$ appears infinitely many times, then the series 
$$\sum_{n\geq 1} \max_{s \in \tilde S_n(\tilde d_n,\tilde d_{n+1})} \vvert \alpha \langle s,\tilde h_n \rangle \vvert$$
in Theorem \ref{theo:nec-suff-cont} part (3) cannot converge. Thus $\lambda_n$ is not a continuous eigenvalue of $(\tilde X, \tilde T)$ for all $n\geq 1$. 
\end{proof}

A natural question arising from last corollary is the following. Given a minimal Cantor system $(X,T)$, is it possible to realise any subgroup of its group of continuous eigenvalues as the group of continuous eigenvalues of some minimal Cantor system $(\tilde X,\tilde T)$ which is strong orbit equivalent with 
$(X,T)$ ?

Proposition 25 in \cite{Cortez&Durand&Petite:2016} (see also \cite{Itza-Ortiz:2007}) asserts in specific cases that it is possible to realise some subgroups, but it is not clear which ones.
Moreover, there are strong obstructions for this kind of realisations. For example, it is well-known that 
$G = \{ \exp (2i\pi  \beta ); \beta \in \mathbb{Z} +\alpha \mathbb{Z}  \}$ is the group of continuous eigenvalues of a Sturmian subshift, where $\alpha$ is the angle of the rotation associated to the Sturmian system. The main result in \cite{Cortez&Durand&Petite:2016} shows that the only other subgroup of $G$ that can be realised within the strong orbit equivalence class of the Sturmian system is the other trivial subgroup. 

We have discussed the relations between the group of continuous eigenvalues and the notion of strong orbit equivalence. In the case of non continuous eigenvalues the situation is completely different. Counterexamples can be obtained using \cite[Corollary~6.2]{ormes}, a remarkable generalization of Dye's theorem \cite{Dye} involving strong orbit equivalence. From Orme'{}s result it can be proved the existence of strong orbit equivalent minimal Cantor systems, each one of them with any prescribed group of eigenvalues for a given ergodic probability measure. Thus non continuous eigenvalues not need to be preserved under strong orbit equivalence, even in the case of rational non continuous ones.  For concrete counterexamples in the case of rational non continuous eigenvalues we can use the Toeplitz systems of finite rank in \cite[\S 6]{dfm}. Various modifications on the local orders of each example there lead to a strong orbit equivalent system keeping its topological rank but losing either some or all rational non continuous eigenvalues. The main reason of this comes from the strict restrictions imposed by rational eigenvalues on local orders in the finite rank Toeplitz case (see for example Theorem \ref{theo:cns_toeplitz_new} and \cite[Corollary~4]{dfm}).

\subsection{A topologically weakly mixing minimal Cantor system of rank two admitting all rational numbers as non continuous eigenvalues}
We have proved that in minimal Cantor systems of Toeplitz-type the finite rank condition strongly restricts the existence of non continuous rational eigenvalues. In this section we prove that for general minimal Cantor systems this is not longer true by constructing a topologically weakly mixing minimal Cantor system of rank two admitting all rational numbers as non continuous eigenvalues.

Set a sequence of positive integers $\left(b_n\talque n>1\right)$ such that $\sum_{n>1} 1/b_n<\infty$. Consider the minimal Cantor system $(X,T)$ given by the proper Bratteli-Vershik representation of rank $2$, where $V_n=\left\{1,2\right\}$ for all $n\geq 1$, $h_1=(1,1)$ and the rest of its edges and local orders are described as follows. For each $n \geq 1$ consider the function $\theta_{n+1}: V_{n+1} \to V_{n}^{*}$ such that for each $u\in V_{n+1}$ its image $\theta_{n+1}(u)$ is the word in $V_{n}$ which lists, following the local order, the sources of all edges in $E_{n+1}$ ending at $u$: 

\begin{align*}
\begin{array}{rcl}
\theta_2(1) &= & 2(1)^52 \\
\theta_2(2) &= & 2(1)^32
\end{array}
& \textrm{\; \; \; \; and\; \; \; \; }
\begin{array}{rcl}
\theta_{n+1}(1) &=& 211(21)^{b_n+1}12 \\
\theta_{n+1}(2) &=& (21)^{b_n}2
\end{array}
 \; \; \textrm{for\, } n > 1,
\end{align*}
\begin{prop}\label{prop:ex_uniquely_ergodic}
The minimal Cantor system $(X,T)$ is uniquely ergodic.
\end{prop}
\begin{proof}
Let $\mu$ be an ergodic measure for $(X,T)$. The following matrix relations hold in the diagram defining 
$(X,T)$. For all $n>1$, 
\begin{align}
\left(h_{n+1}(1), h_{n+1}(2)\right) & =
\left(h_{n}(1),h_{n}(2)\right)
\left(
\begin{array}{cc}
b_n+4 & b_n\\
b_n+3 & b_n+1
\end{array}
\right), \label{eq:ex_basic_relations_h}\\
\left(
\begin{array}{c}
\mu_{n}(1) \\ \mu_{n}(2)
\end{array}
\right) & =
\left(
\begin{array}{cc}
b_n+4 & b_n\\
b_n + 3& b_n + 1
\end{array}
\right)
\left(
\begin{array}{c}
\mu_{n+1}(1) \\ \mu_{n+1}(2)
\end{array}
\right). \label{eq:ex_basic_relations_mu}
\end{align}
Using these relations one has that for all $n>2$
\begin{align*}
\frac{h_n(1)}{h_n(1)+h_n(2)}=\frac{h_{n-1}(1)(b_{n-1}+4)+h_{n-1}(2)(b_{n-1}+3)}{(h_{n-1}(1)+h_{n-1}(2))(2b_{n-1}+4)},
\end{align*}
and thus 
\begin{align*}
\frac{b_{n-1} + 3}{2b_{n-1}+4} < \frac{h_n(1)}{h_n(1)+h_n(2)} < \frac{b_{n-1}+4}{2b_{n-1}+4}.
\end{align*}
Also,
\begin{align*}
\mu_n(1)&=\frac{(b_n+4)\mu(\tau_{n+1}=1)}{h_{n+1}(1)} + \frac{b_n\mu(\tau_{n+1}=2)}{h_{n+1}(2)}, \\
&=\frac{(b_n+4)\mu(\tau_{n+1}=1)}{(b_{n}+4)h_n(1)+(b_n+3)h_n(2)}+ \frac{b_n\mu(\tau_{n+1}=2)}{b_{n}h_n(1)+(b_n+1)h_n(2)}.
\end{align*}
Since $\mu(\tau_{n+1}=1)+\mu(\tau_{n+1}=2)=1$ we get
\begin{align*}
\frac{b_n}{b_n+4}\left[\frac{1}{h_n(1)+h_n(2)}\right] < \mu_n(1) < \frac{b_n+4}{b_n}\left[\frac{1}{h_n(1)+h_n(2)}\right].
\end{align*}
This implies that $\mu(\tau_n=1)=h_n(1)\mu_n(1) \xrightarrow[n\to+\infty]{} 1/2$ and $\mu(\tau_n=2) \xrightarrow[n\to+\infty]{} 1/2$. Therefore the system is of exact finite rank, which implies its unique ergodicity (see last paragraph of Section \ref{subsec:clean}).
\end{proof}

To compute eigenvalues of $(X,T)$ for its unique invariant measure $\mu$ we will use
Theorem \ref{theo:cns_sinrho}. Notice from the last proof that the diagram defining $(X,T)$ is clean, so all the requirements of the theorem hold. Moreover, $I_\mu=\{1,2\}$. 

\begin{rmrk}\label{rmrk:forma_matrices_ejemplo}
All matrices $M_{n}$, for $n>2$, are of the form $\left(\begin{smallmatrix}a+1&b\\a&b+1\end{smallmatrix}\right)$ with $a,b \in\ZZ$. Then recursively we conclude that matrices $P_{m,n}$, with $2\leq m<n$, are of the same form.
\end{rmrk}

Before applying Theorem \ref{theo:cns_sinrho} we need to understand 
the suffix sets $\sufijocbb{m}{n}{u}{v}$ for $1 \leq m < n$ and $u,v\in \{1,2\}$. We will use some relations of suffix vectors contained in Section 2.2.4, particularly we recall the equality \eqref{eq:formulasuffix_lmn},
\begin{align*}
s_{\ell,n}(x) =  s_{\ell,m}(x) + s_{m,n}(x)P^{T}_{\ell,m}, \textrm{\; for \, } 0\leq \ell < m < n \textrm{ and } x\in X. 
\end{align*}
Also recall that $s_{n}(x)$ stands for $s_{n,n+1}(x)$, with $n\geq 0$ and $x\in X$.
\begin{lemma}\label{lemma:ex_sufijos_error}
Let $L_1, L_2$ and $L_3$ be the following subsets of row vectors of $\ZZ^{2}$: 
\begin{align*}
&L_1=\left\{\left(a,a\right)\, \talque\, a\in\ZZ \right\},
L_2=\left\{\left(a+1,a\right)\, \talque\, a\in\ZZ \right\} \textrm{\; and\; }\\
&L_3=\left\{\left(a,a+1\right)\, \talque\, a\in\ZZ \right\}.
\end{align*}
Then, for all large enough integers $2<m<n$, the following quotients
$$
\frac{\left|\sufijocbb{m}{n}{1}{1}\setminus L_1\right|}{\matrizp{m}{n}{1}{1}}, \, \frac{\left|\sufijocbb{m}{n}{2}{1}\setminus L_2\right|}{\matrizp{m}{n}{2}{1}},\, \frac{\left|\sufijocbb{m}{n}{1}{2}\setminus L_3\right|}{\matrizp{m}{n}{1}{2}}\textrm{\; \; and\; \; }\frac{\left|\sufijocbb{m}{n}{2}{2}\setminus L_1\right|}{\matrizp{m}{n}{2}{2}},
$$
are bounded from above by $\displaystyle\sum_{k=m}^{n-1}2/b_k$.
\end{lemma}
\begin{proof}

Take integers $m> 2$ and $k>0$. First we estimate the cardinality of $\sufijocbb{m}{m+k+1}{1}{1}\setminus L_1$.

Let us take $x\in X$ such that $\torref{m}(x)=1$ and $\torref{m+k+1}(x)=1$, then $s_{m,m+k+1}(x)\in \sufijocbb{m}{m+k+1}{1}{1}$. Using \eqref{eq:formulasuffix_lmn} and Remark \ref{rmrk:forma_matrices_ejemplo} we see that 

\begin{align*}
\left.
\begin{array}{c}
s_{m+k}(x)\in L_1$ \textrm{ and } $s_{m,m+k}(x)\in L_1 \\ \textrm{or} \\
s_{m+k}(x)\in L_2$ \textrm{ and } $s_{m,m+k}(x)\in L_3
\end{array}
\right\} &\implica s_{m,m+k+1}(x)\in L_1.
\end{align*}
From this property, if $s_{m,m+k+1}(x)\not\in L_1$ then we get that one of the following excluding properties holds: 
\begin{enumerate}[(a)]
\item $\torref{m+k}(x)=1, s_{m+k}(x)\in L_1$ and $s_{m,m+k}(x)\not\in L_1$.
\item $\torref{m+k}(x)=1$ and $s_{m+k}(x)\not\in L_1$.
\item $\torref{m+k}(x)=2, s_{m+k}(x)\in L_2$ and $s_{m,m+k}(x)\not\in L_3$.
\item $\torref{m+k}(x)=2$ and $s_{m+k}(x)\not\in L_2$.
\end{enumerate}
Also, looking at $\theta_{m+k+1}(1)$, we have $\left|\sufijocb{m+k}{1}{1}\cap L_1\right|=b_{m+k}+2, \left|\sufijocb{m+k}{1}{1}\setminus L_1\right|=2, \left|\sufijocb{m+k}{1}{2}\cap L_2\right|=b_{m+k}+1$ and $\left|\sufijocb{m+k}{1}{2}\setminus L_2\right|=2$. Then, counting following conditions (a) to (d) gives the upper bound
\begin{align*}
\left|\sufijocbb{m}{m+k+1}{1}{1}\setminus L_1\right| &\leq (b_{m+k}+2)\left|\sufijocbb{m}{m+k}{1}{1}\setminus L_1\right| + 2 \matrizp{m}{m+k}{1}{1}\\
& \quad + (b_{m+k}+1)\left|\sufijocbb{m}{m+k}{1}{2}\setminus L_3\right| + 2\matrizp{m}{m+k}{1}{2}.
\end{align*}

Now, cardinalities of $\sufijocbb{m}{m+k+1}{2}{1}\setminus L_2, \sufijocbb{m}{m+k+1}{1}{2}\setminus L_3$ and $\sufijocbb{m}{m+k+1}{2}{2}\setminus L_1$ can be estimated in a similar way, getting
\begin{align*}
\left|\sufijocbb{m}{m+k+1}{2}{1}\setminus L_2\right| &\leq (b_{m+k}+2)\left|\sufijocbb{m}{m+k}{2}{1}\setminus L_2\right| + 2 \matrizp{m}{m+k}{2}{1}\\
& \quad + (b_{m+k}+1)\left|\sufijocbb{m}{m+k}{2}{2}\setminus L_1\right| + \textstyle 2 \matrizp{m}{m+k}{2}{2}, \\
\left|\sufijocbb{m}{m+k+1}{1}{2}\setminus L_3\right| &\leq (b_{m+k}+1)\left|\sufijocbb{m}{m+k}{1}{2}\setminus L_3\right| + b_{m+k}\left|\sufijocbb{m}{m+k}{1}{1}\setminus L_1\right|, \\
\left|\sufijocbb{m}{m+k+1}{2}{2}\setminus L_1\right| &\leq (b_{m+k}+1)\left|\sufijocbb{m}{m+k}{2}{2}\setminus L_1\right| + b_{m+k}\left|\sufijocbb{m}{m+k}{2}{1}\setminus L_2\right|.
\end{align*}

Let $Q_{m,n}$, for $2<m<n$, denote the maximum of the four quotients in the formulation of the Lemma.
We have, 
\begin{align*}
\frac{\left|\sufijocbb{m}{m+k+1}{1}{1}\setminus L_1\right|}{\matrizp{m}{m+k+1}{1}{1}} &\leq 
\frac{(b_{m+k}+2)\left|\sufijocbb{m}{m+k}{1}{1}\setminus L_1\right|}{\sum_{u=1}^{2}\matrizp{m}{m+k}{1}{u}M_{m+k+1}(u,1)} \\
& \qquad + 
\frac{(b_{m+k}+1)\left|\sufijocbb{m}{m+k}{1}{2}\setminus L_3\right|}{\sum_{u=1}^{2}\matrizp{m}{m+k}{1}{u}M_{m+k+1}(u,1)} \\
& \qquad\qquad + 
\frac{2\sum_{u=1}^{2}\matrizp{m}{m+k}{1}{u}}{\sum_{u=1}^{2}\matrizp{m}{m+k}{1}{u}M_{m+k+1}(u,1)} \\
& \leq \frac{(b_{m+k}+2)\left|\sufijocbb{m}{m+k}{1}{1}\setminus L_1\right|}{(b_{m+k}+3)\sum_{u=1}^{2}\matrizp{m}{m+k}{1}{u}} \\
& \qquad + 
\frac{(b_{m+k}+1)\left|\sufijocbb{m}{m+k}{1}{2}\setminus L_3\right|}{(b_{m+k}+3)\sum_{u=1}^{2}\matrizp{m}{m+k}{1}{u}} \\
& \qquad\qquad + 
\frac{2\sum_{u=1}^{2}\matrizp{m}{m+k}{1}{u}}{(b_{m+k}+3)\sum_{u=1}^{2}\matrizp{m}{m+k}{1}{u}} \\
& \leq \frac{\matrizp{m}{m+k}{1}{1}}{\sum_{u=1}^{2}\matrizp{m}{m+k}{1}{u}}\cdot \frac{\left|\sufijocbb{m}{m+k}{1}{1}\setminus L_1\right|}{\matrizp{m}{m+k}{1}{1}} \\
& \qquad + \frac{\matrizp{m}{m+k}{1}{2}}{\sum_{u=1}^{2}\matrizp{m}{m+k}{1}{u}}\cdot \frac{\left|\sufijocbb{m}{m+k}{1}{2}\setminus L_3\right|}{\matrizp{m}{m+k}{1}{2}} + \frac{2}{b_{m+k}+3} \\
&\leq Q_{m,m+k} + \frac{2}{b_{m+k}}.
\end{align*}
It can be seen that this bound works for the other three remaining quotients. So, we get the recurrence formula
\begin{align*}
Q_{m,m+k+1} \leq Q_{m,m+k} + \frac{2}{b_{m+k}}.
\end{align*}
We conclude by noticing that a direct computation gives $Q_{m,m+1} \leq 2/b_m$.
\end{proof}

\begin{prop}\label{prop:ex_ev_nocontinuos}
For every $\ell\geq 2$, $\lambda= \exp\left(2i\pi/\left(h_{\ell}(1) + h_{\ell}(2)\right)\right)$ is a non continuous eigenvalue of 
$(X,T)$ for the unique invariant measure $\mu$. 
\end{prop}
\begin{proof}
Let us take ${\ell}\geq 2$. By \eqref{eq:ex_basic_relations_h}, it can be seen by induction that for all $m\geq {\ell}$
\begin{align*}
h_m(1)=h_{\ell}(1)\; (\textrm{mod } h_{\ell}(1)+h_{\ell}(2)) \textrm{\; \; and\; \; }
h_m(2)=h_{\ell}(2)\; (\textrm{mod } h_{\ell}(1)+h_{\ell}(2)).
\end{align*}
Then, by Corollary \ref{coro:rootsofunity}, if $\lambda=\exp\left(2i\pi/\left(h_{\ell}(1) + h_{\ell}(2)\right)\right)$ is an eigenvalue then it cannot be continuous.

In order to show that $\lambda$ is actually an eigenvalue, we are going to use Corollary \ref{coro:cs_sin_rho} for vertices $u=1$ and $v=2$, the other cases can be done analogously. Recall that $I_{\mu}=\{1,2\}$.

Let $L_3$ be as in the formulation of Lemma \ref{lemma:ex_sufijos_error} and consider large enough positive integers $n>m\geq {\ell}$. If we take $s\in\sufijocbb{m}{n}{1}{2}\cap L_3$ then there exists $a\in\ZZ$ such that
\begin{align*}
\la s, h_m\ra = \la \left(a,a+1\right), \left(h_m(1), h_m(2)\right) \ra &= a\left(h_m(1) + h_m(2)\right) + h_m(2) \\ &= h_{\ell}(2) \; (\textrm{mod } h_{\ell}(1)+h_{\ell}(2)).
\end{align*} 

Then, for all $s\in \sufijocbb{m}{n}{1}{2}\cap L_3$, $\lambda^{\la s, h_m\ra}=\exp\left(2i\pi h_{\ell}(2)/(h_{\ell}(1)+h_{\ell}(2))\right)$ and
\begin{align*}
\left|\sum_{s\in\sufijocbb{m}{n}{1}{2}}\lambda^{\la s, h_m\ra}\right| 
&\geq \left|\sum_{s\in\sufijocbb{m}{n}{1}{2}\cap L_3}\lambda^{\la s, h_m\ra}\right| - 
\left|\sum_{s\in\sufijocbb{m}{n}{1}{2}\setminus L_3}\lambda^{\la s, h_m\ra}\right|  \\
&\geq \left|\sufijocbb{m}{n}{1}{2}\cap L_3\right| -\left|\sufijocbb{m}{n}{1}{2}\setminus L_3\right| \\
&= \matrizp{m}{n}{1}{2} - 2\left|\sufijocbb{m}{n}{1}{2}\setminus L_3\right|.
\end{align*}

Applying Lemma \ref{lemma:ex_sufijos_error} we obtain
\begin{align*}
1-\sum_{k\geq m} \frac{4}{b_k} \leq \frac{\left|\sum_{s\in\sufijocbb{m}{n}{1}{2}}\lambda^{\la s, h_m\ra}\right|}{\matrizp{m}{n}{1}{2}}\leq 1,
\end{align*}
and we get the condition of Corollary \ref{coro:cs_sin_rho}. This shows that $\exp\left(2i\pi/(h_{\ell}(1)+h_{\ell}(2))\right)$ is a non continuous eigenvalue of $(X,T)$ for $\mu$.
\end{proof}

\begin{rmrk}\label{rmrk:ex_divisores}
If we take $\ell\geq 2$ and an integer $p$ dividing $h_{\ell}(1)+h_{\ell}(2)$, then obviously $\exp(2i\pi/p)$ is an eigenvalue of $(X,T)$ with respect to $\mu$, but it could be a continuous one. 
\end{rmrk}

\begin{coro}\label{coro:ex_primeros_ev}
The complex number $\exp(2i\pi/12)$ is a non continuous eigenvalue of $(X,T)$ for the unique invariant measure $\mu$. Moreover, $\exp(2i\pi/2)$ and $\exp(2i\pi/3)$ are both non continuous eigenvalues for that measure.
\end{coro}
\begin{proof}
As $h_2(1)+h_2(2)=12$ we get from Proposition \ref{prop:ex_ev_nocontinuos} that $\exp(2i\pi/12)$ is a non continuous eigenvalue. Then $\exp(2i\pi/2)$ and $\exp(2i\pi/3)$ are eigenvalues of $(X,T,\mu)$, at least one of them non continuous.

But as in the proof of Proposition \ref{prop:ex_ev_nocontinuos}, for $m\geq 2$
\begin{align*}
h_m(1)=7\; (\textrm{mod } 12) \textrm{\; \; and\; \; }
h_m(2)=5\; (\textrm{mod } 12),
\end{align*}
and from Corollary \ref{coro:rootsofunity} it follows that neither $\exp(2i\pi/2)$ nor $\exp(2i\pi/3)$ can be continuous eigenvalues.
\end{proof}

The example above is indeed a family of systems indexed by the different sequences $\left(b_n\talque n> 1\right)$ satisfying $\sum_{n>1} 1/b_n < \infty$. Choosing some of these sequences we get the following result.

\begin{prop}\label{prop:ex_racionales_vpnocontinuos}
There exists a uniquely ergodic minimal Cantor system $(X,T)$ of topological rank $2$ such that for every $\alpha\in\QQ$, $\exp(2i\pi\alpha)$ is a non continuous eigenvalue of $(X,T)$ for the unique invariant measure $\mu$.
\end{prop}

Before proving Proposition \ref{prop:ex_racionales_vpnocontinuos}, a particular sequence $\left(b_n\talque n> 1\right)$ will be defined recursively.

For $n\geq 1$, let us denote by $p_n$ the $n$-th odd prime number and set $b_2=13$. Notice that $h_2(1) + h_2(2)=12$ and $h_3(1) + h_3(2)=360$, none of them being a multiple of $p_n$ for $n\geq 4$.

Now, let us fix $n\geq 3$ and suppose that we know the elements of $\left(b_n\talque n>1\right)$ up to the $(n-1)$-th one. For $k=2,\ldots, n$, using \eqref{eq:ex_basic_relations_h} we also know the values of $h_k(1)$ and $h_k(2)$ and we will assume that $h_n(1) + h_n(2)$ is not a multiple of $p_{m}$ for $m\geq n+1$.

We choose the element $b_n$ such that
\begin{align}\label{eq:ex_definiendo_sucesion_1ra}
b_n+2 \in \left\{p_1^{\alpha_1}\cdots p_n^{\alpha_n}\in\ZZ \talque \alpha_i>0 \textrm{\, for } i = 1,\ldots, n\right\} \textrm{\, \, and} \\ \label{eq:ex_definiendo_sucesion_2da}
\left(3b_n+8\right)\left(h_n(1) + h_n(2)\right) \neq h_n(2) \; \; \left(\textrm{mod }p_{n+1}\right).
\end{align}
This last equation can be solved because $3\left(h_n(1) + h_n(2)\right)\neq 0 \; \left(\textrm{mod }p_{n+1}\right)$. 

There are infinitely many possibilities for choosing a positive integer $b_n$ satisfying both conditions. This is so because the different possibles $b_n$'{}s not satisfying condition \eqref{eq:ex_definiendo_sucesion_2da} are distant from each other by a multiple of $p_{n+1}$.

Even though $b_{n-1}$ does not appear explicitly in the conditions defining $b_n$, there is a recursion because the heights of level $n$, namely $h_n(1)$ and $h_n(2)$, appear in \eqref{eq:ex_definiendo_sucesion_2da} and they depend on $b_{n-1}$.

In order to complete the recursive step, it only remains to verify that $h_{n+1}(1) + h_{n+1}(2)$ is not a multiple of $p_{m}$ for any $m\geq n+2$. This can be seen from \eqref{eq:ex_basic_relations_h} noticing that
\begin{align}\label{eq:ex_recursion_suma_alturas}
h_{n+1}(1) + h_{n+1}(2) &= 2(b_n + 2)\left(h_n(1) + h_n(2)\right).
\end{align}

Notice that conditions \eqref{eq:ex_definiendo_sucesion_1ra} and \eqref{eq:ex_definiendo_sucesion_2da} are also satisfied for $n=2$.

According to the definition of $\left(b_n\talque n>1\right)$, $\sum_{n>1} 1/b_n<\infty$ because $b_n\geq \Pi_{i=1}^{n}p_i - 2$ for all $n\geq 2$.

\begin{proof}[Proof of Proposition \ref{prop:ex_racionales_vpnocontinuos}]
We consider the system $(X,T)$ of topological rank 2 defined at the beginning of this section along with the sequence $\left(b_n\talque n>1\right)$ defined above.

Let us take a positive integer $p$. From \eqref{eq:ex_definiendo_sucesion_1ra} and applying recursively \eqref{eq:ex_recursion_suma_alturas}, there exists  a sufficiently large integer $m$ such that $p$ divides $h_k(1) + h_k(2)$ for all $k\geq m$. By Remark \ref{rmrk:ex_divisores}, we have that $\exp(2i\pi/p)$ is an eigenvalue of $(X,T)$ for its unique invariant measure. Then $\exp(2i\pi\alpha)$ is an eigenvalue of $(X,T)$ for every $\alpha\in\QQ$. 
In order to show that they are all non continuous it suffices to prove that $\operatorname{g.c.d.}(h_n(1),h_n(2))=1$ for all $n\geq 2$ (see Corollary \ref{coro:rootsofunity}).

From the proof of Corollary \ref{coro:ex_primeros_ev}, neither $2$ nor $3$ divide  $\operatorname{g.c.d.}(h_n(1),h_n(2))$ for all $n\geq 2$. Now take $m\geq 2$ and consider the $m$-th odd prime $p_m$. From \eqref{eq:ex_definiendo_sucesion_1ra} and \eqref{eq:ex_recursion_suma_alturas} we can see that $p_m$ divides $h_n(1) + h_n(2)$ for all $n\geq m+1$ and it does not do so for $n<m+1$.
Then, for $n<m+1$ it is not possible that $p_m$ divides $\operatorname{g.c.d.}(h_n(1),h_n(2))$.

Using \eqref{eq:ex_basic_relations_h} we have
\begin{align*}
&h_{m+1}(2) \\ 
& \quad = b_m\left(h_m(1) + h_m(2)\right) + h_m(2) \\
& \quad = 2b_m\left(b_{m-1} + 2\right)\left(h_{m-1}(1) + h_{m-1}(2)\right)  
 + b_{m-1}\left(h_{m-1}(1) + h_{m-1}(2)\right) + h_{m-1}(2) \\
\end{align*}
and by \eqref{eq:ex_definiendo_sucesion_1ra} and \eqref{eq:ex_definiendo_sucesion_2da} we get
\begin{align*}
h_{m+1}(2) 
& = \left(-3b_{m-1} - 8\right)\left(h_{m-1}(1) + h_{m-1}(2)\right) + h_{m-1}(2) \; \;  (\textrm{mod }p_m) \\
& \neq 0 \; \; (\textrm{mod }p_m).
\end{align*}

Then $p_m$ does not divide $h_{m+1}(2)$. So $p_m$ cannot divide $\operatorname{g.c.d.}(h_{m+1}(1),h_{m+1}(2))$.

A simple induction using \eqref{eq:ex_basic_relations_h} implies that $h_n=h_{m+1}\; (\textrm{mod } p_m)$ 
for $n > m+1$. So $p_m$ cannot divide $\operatorname{g.c.d.}(h_{n}(1),h_{n}(2))$ for $n>m+1$ either. 

We conclude that there is no prime integer dividing the $\operatorname{g.c.d.}(h_n(1),h_n(2))$ for all $n\geq 2$. This fact completes the proof.
\end{proof}

\begin{coro}\label{coro:ex_mezcladebiltopologica_y_muchos_vp}
There exists a topologically weakly mixing uniquely ergodic minimal Cantor system $(X,T)$ of finite topological rank whose group of eigenvalues for the unique invariant measure contains an isomorphic copy of $\QQ$.
\end{coro}
\begin{proof}
We consider the uniquely ergodic minimal Cantor system $(X,T)$ (of topological rank 2)  constructed in Proposition \ref{prop:ex_racionales_vpnocontinuos}. It has no roots of unity as continuous eigenvalues and 
its group of eigenvalues for the unique invariant measure contains an isomorphic copy of $\QQ$.

Using Corollary \ref{coro:erasingcontinuous} we have an strong orbit equivalent minimal Cantor system $(\tilde{X},\tilde{T})$ also of topological rank 2 and uniquely ergodic which is topologically weakly mixing and have the same measurable eigenvalues as $(X,T)$.
\end{proof}

\begin{rmrk}\label{rmrk:similar_example}
An example with similar characteristics can be obtained as follows. First we take any topological weakly mixing Cantor minimal  system of finite rank. Using \cite[Theorem~6.1]{ormes} we can prove that there exists a strong orbit equivalent system with all possible roots of unity as eigenvalues. Then this last system could have continuous eigenvalues but, by strong orbit equivalence, all of them need to be irrational. So, by applying Corollary \ref{coro:erasingcontinuous}, we can obtain a topological weakly mixing Cantor minimal system having all rational eigenvalues as non continuous ones. However, this process cannot guarantee that the resulting system is of topological finite rank or even expansive.
\end{rmrk}

\subsection{Minimal Cantor systems with the maximal continuous eigenvalue group property}

Let $(X,T)$ be a minimal Cantor system.
We set 
$$
E (X,T) = \{ \alpha  \in \RR | \exp (2i\pi \alpha ) \  \hbox{ is a continuous eigenvalue of }  (X,T) \} .
$$
We call it the {\em group of additive continuous eigenvalues}.
It is well-known that $E (X,T)$ is countable and contains $\ZZ$.
Let 
$$
I(X,T) = \bigcap_{\mu\in \mathcal{M} (X,T)} \left\{ \int_X f d\mu; f\in C(X,\mathbb{Z}) \right\},
$$ 
where $C(X,\ZZ)$ is the set of continuous functions from $X$ to $\ZZ$ and $\mathcal{M} (X,T)$ is the set of $T$-invariant probability measures. It is known that $I(X,T)$ is an invariant of strong orbit equivalence \cite{gps} and that $E(X,T)\subseteq I(X,T)$ \cite{Cortez&Durand&Petite:2016,GHH}. 

From Corollary \ref{coro:erasingcontinuous}, given a minimal Cantor system $(X,T)$ without rational continuous eigenvalues, there exists a strong orbit equivalent minimal Cantor system $(Y,S)$, so in particular $I(Y,S) = I(X,T)$, such that $E(Y,S)=\ZZ$ (this also can be deduced from \cite{ormes,GHH}).  
On the other extreme, for $(X,T)$ such that $E(X,T)\subsetneq I(X,T)$, it is not known whether one has a strong orbit equivalent system $(Y,S)$ such that $E(Y,S)=I(Y,S)=I(X,T)$ (for a deeper discussion and to motivate this question we refer the reader to \cite{Cortez&Durand&Petite:2016,GHH}). If the equality $E(Y,S)=I(Y,S)$ holds we say that $(Y,S)$ has the {\em maximal continuous eigenvalue group property}.

Below we provide a family of examples having this property using a result about the Brun algorithm for continued fractions and our criteria to be a continuous eigenvalue of a minimal Cantor system.

\subsubsection{Brun matrices and its properties}
In this section we present a version of a result due to A. Avila and V. Delecroix \cite{Avila&Delecroix:2017} that will help us to construct minimal Cantor systems having the maximal continuous eigenvalue group property. We will use and recall their notation.

We will make use of the following matrices coming from the so called Brun algorithm for multidimensional continued fractions (see \cite{Schweiger:2000}):
$$
B^{(1)} = \begin{pmatrix}1&1&0\\0&1&0\\0&0&1\end{pmatrix},
\qquad
B^{(2)} = \begin{pmatrix}1&1&0\\1&0&0\\0&0&1\end{pmatrix},
\qquad
B^{(3)} = \begin{pmatrix}1&0&1\\1&0&0\\0&1&0\end{pmatrix}.
$$
We call them Brun matrices and
for a word $w = w_1 \ldots w_n$ on the alphabet $\{ 1,2,3\}$
we set $B^{(w)} = B^{(w_1)} \cdots B^{(w_n)}$.

We say a non negative integer square matrix is \emph{Pisot} if its dominant eigenvalue is simple and all
the other eigenvalues have absolute values strictly less than one. 

\begin{prop} \cite{Brun:1957,Avila&Delecroix:2017}
\label{prop:brunprim}
Let $w$ be a word on the alphabet $\{ 1,2,3\}$.
Then, $B^{(w)}$ is primitive (some power of $B^{(w)}$ is strictly positive) 
if and only if $3$ appears as a symbol of $w$. 
Moreover, if $B^{(w)}$ is primitive then $B^{(w)}$ is Pisot.
\end{prop}

Let $C^{(c)}(0,1)$ ($C^{(r)}(0,1)$) denote the set of column (row) vectors $x \in \mathbb{R}^3$ with $\| x \| = 1$, where $\|\cdot \|$ is the supremum norm.  
For a real square matrix $M$ of dimension three and a subset of column vectors $R \subseteq \RR^3$ define  
$\|M\|_R  = \sup_{x \in R} \|M \|_x$, where 
$\|M\|_x = \sup_{\{z\in C^{(r)}(0,1); \ z\cdot x = 0\}} \|zM\|$. We are forced to distinguish between row and column vectors in order to be consistent with the notation used before. In particular, height and suffix vectors, $h_n$, $s_n$, $s_{m,n}$, etc., are row vectors and measure vectors, $\mu_n$, are column vectors.

Consider the matrices $A^{(1)}=B^{(1)}$, $A^{(2)}=B^{(2)}$ and $A^{(3)}=(B^{(3)})^6$. It is direct to verify that $A^{(3)}>0$. As before, for a word $w= w_1 \ldots w_n$ on the alphabet $\{ 1,2,3\}$ we define $A^{(w)}=A^{(w_1)} \cdots A^{(w_n)}$. 
By Proposition \ref{prop:brunprim}, if $w$ contains a $3$ then $A^{(w)}$ is Pisot (indeed, it is strictly positive). 

Let $D = \{x=(x_1,x_2,x_3)^T \in \RR^3 ; x_1> x_2 > x_3 >0 \}$ and for a word $w$ on the alphabet 
$\{1,2,3\}$ set $D^{(w)} = A^{(w)}D$. It is clear that $A^{(i)} D \subseteq D$ for all $i\in \{1,2,3\}$ and thus $D^{(w)} \subseteq D$. Moreover, it is a direct computation to verify that $A^{(w)}(\RR_{+}^3\setminus\{0\}) \subseteq D$ if $w$ contains at least one $3$, where $\RR_{+}$ is the set of nonnegative reals.

We will need the following three lemmas that give a finer structure of the products of Brun's matrices.
The first one is an adaptation from  \cite{Avila&Delecroix:2017} to matrices $A^{(w)}$.

\begin{lemma}
\label{lemme:cocycle}
Let $w$ and $w'$ be two words on the alphabet $\{ 1,2,3\}$. 
Then,
$$
\Vert A^{(w)}\Vert_{D^{(w)}} \leq 1 \hbox{ and } \Vert A^{(ww')}\Vert_{D^{(ww')}} \leq \Vert A^{(w)}\Vert_{D^{(w)}}  \Vert A^{(w')}\Vert_{D^{(w')}} .
$$
\end{lemma}
\smallskip

\begin{lemma}
\label{lem:brunpisot2}
Let $w$ be a word on the alphabet $\{ 1,2,3\}$ containing at least one $3$. 
Then, there exists an integer $n\geq 1$ such that  $||A^{(w^n)}||_{D^{(w^n)}} < 1$, where $w^n$ is the concatenation of the word $w$, $n$ times. 
\end{lemma}

\begin{proof}
From Proposition \ref{prop:brunprim} and the structure of the Brun matrices, $A^{(w)}$ is a Pisot matrix of determinant $1$ or $-1$. Since its characteristic polynomial is monic with integer coefficients, then it has three different roots $\alpha_1$, $\alpha_2$ and $\alpha_3$ that we can take
satisfying $\alpha_1 > 1 > |\alpha_2| \geq |\alpha_3|>0$. 
Let $y^{(1)}$, $y^{(2)}$ and $y^{(3)}$ be a base of $\RR^3$ formed by the corresponding right eigenvectors of $A^{(w)}$ and consider the matrix $P$ whose columns are $y^{(1)}$, $y^{(2)}$ and $y^{(3)}$. 
Similarly, let $z^{(1)}$, $z^{(2)}$ and $z^{(3)}$ be left eigenvectors of $A^{(w)}$ associated to $\alpha_1$, $\alpha_2$ and $\alpha_3$ and consider the matrix $Q$ whose rows are $z^{(1)}$, $z^{(2)}$ and $z^{(3)}$. Clearly, $P$ and $Q$ are invertible and $ z^{(i)}\cdot y^{(j)}=0$ if $i\not = j$. 
We can also take previous eigenvectors satisfying $z^{(i)}\cdot y^{(i)}=1$ and $\| z^{(i)}\| =1$ for 
all $i\in \{1,2,3\}$. By continuity, there exist $\delta_1>0$ and $\delta_2 >0$ such that 
\begin{equation}
\label{eq:deltas}
\sup_{x\in P^{-1} C^{(c)}(0,1)} \Vert x\Vert\leq \delta_2, \ 
\sup_{z\in Q^{-1} C^{(r)}(0,1)} \Vert z \Vert\leq \delta_2 \ \text{ and } \
\inf_{x\in P^{-1} (D \cap C^{(c)}(0,1))} |x_1| \geq \delta_1.
\end{equation}
\smallskip

Let $n\geq 1$ and $y=A^{(w^n)} y' \in D^{(w^n)}$ with $y' \in D \cap C^{(c)}(0,1)$. 
By definition of $\Vert A^{(w^n)}\Vert_{D^{(w^n)}}$ it is enough to consider normalized vectors like $y'$.
Since $y' = a_1 y^{(1)} + a_2 y^{(2)} + a_3 y^{(3)}=P (a_1,a_2,a_3)^T$ we have that 
$(a_1,a_2, a_3)^T \in P^{-1} (D\cap C^{(c)}(0,1))$. Now, take $z\in C^{(r)}(0,1)$ such that $z \cdot y = 0$. 
As before, we have that 
$z = b_1 z^{(1)} + b_2 z^{(2)} + b_3 z^{(3)}$ with $(b_1,b_2, b_3)$ in $Q^{-1} C^{(r)}(0,1)$.
From \eqref{eq:deltas} we obtain $|a_1|\geq \delta_1$, $|a_i|\leq \delta_2$ and 
$|b_i|\leq \delta_2$ for $i\in \{1,2,3\}$. Also, 
$$
0 = z \cdot y  = z A^{(w^n)} y'   
=
a_1b_1 \alpha_1^n  + a_2b_2 \alpha_2^n  + a_3b_3 \alpha_3^n .  
$$
Hence,
\begin{align*}
\Vert z A^{(w^n)} \Vert & = \Vert b_1  \alpha_1^n z^{(1)}+ b_2 \alpha_2^n z^{(2)} + b_3 \alpha_3^n z^{(3)}\Vert \\
& =
\left \Vert \left(
- \frac{a_2}{a_1}b_2\alpha_2^n  -  \frac{a_3}{a_1}b_3\alpha_3^n
 \right)
z^{(1)}
+ b_2 \alpha_2^n z^{(2)} + b_3 \alpha_3^n z^{(3)} \right \Vert\\
& \leq 
\left (\left |\frac{a_2}{a_1}b_2 \right| +  \left |\frac{a_3}{a_1}b_3 \right |
+| b_2 |+ |b_3  | \right)|\alpha_2|^n
\leq 
2 \left (\frac{\delta_2}{\delta_1}  
+ 1 \right )\delta_2 |\alpha_2|^n.
\\
\end{align*}
Taking $n$ large enough we conclude. 
\end{proof}

\begin{lemma}
\label{lemma:ratind}
Let ${\bf w}=({\bf w}_n)_{n\geq 0}$ be a sequence in $\{ 1,2,3 \}^\mathbb{N}$ where $3$ appears infinitely many times. Assume there is an increasing sequence of positive integers $(n_j)_{j\geq 0}$ such that $\displaystyle\lim_{j\to +\infty} \| A^{({\bf w}_{[0,n_j)})} \|_{ D^{({\bf w}_{[0,n_j)})} } = 0$, where ${\bf w}_{[0,n_j)}={\bf w}_0\ldots {\bf w}_{n_j-1}$. 
Then, any $\nu \in \bigcap_{n\geq 1} A^{({\bf w}_{[0,n)})} D$ has rationally independent entries. 
\end{lemma}

\begin{proof}
On the contrary, take a nonzero integer row vector $z$ such that $z\cdot\nu =0$.
Since $A^{({\bf w}_{[0,n)})}$ is invertible, then $z A^{({\bf w}_{[0,n)})}$ is a nonzero integer vector for all $n\geq 1$ and 
\begin{align*}
0 < \frac{1}{\Vert z\Vert } \leq \| \frac{z}{\Vert z \Vert} A^{({\bf w}_{[0,n)})}   \| \leq \| A^{({\bf w}_{[0,n)})} \|_\nu \leq  \| A^{({\bf w}_{[0,n)})} \|_{ D^{({\bf w}_{[0,n)})} }, 
\end{align*}
where in the last inequality we have used that $\nu \in \bigcap_{n\geq 1} A^{({\bf w}_{[0,n)})} D=
\bigcap_{n\geq 1} D^{({\bf w}_{[0,n)})} $.
Taking liminf in last expression leads to a contradiction. 
\end{proof}

\subsubsection{Constructing a minimal Cantor system having the maximal continuous eigenvalue group property}
In this section we apply previous results to define a family of minimal Cantor systems 
$(X,T)$ such that $I(X,T)=E(X,T)$, {\it i.e.}, satisfying the maximal continuous eigenvalue group property. 

\begin{prop}
\label{prop:example}
Let ${\bf w}=({\bf w}_n)_{n\geq 0}$  be a sequence in $\{ 1,2,3\}^{\mathbb{N}}$ where $3$ appears infinitely many times and
let $(n_j)_{j\geq 0}$ be an increasing sequence of positive integers  such that
\begin{align}
\label{hyp:CV}
\sum_{j\geq 0}  (n_{j+1}-n_j)\| A^{({\bf w}_{[0,n_{j})})} \|_{D^{({\bf w}_{[0,n_{j})})}} < +\infty.
\end{align} 
Let $(X,T)$ be a finite rank minimal Cantor system given by a Bratteli-Vershik representation  
whose incidence matrices are $M_1=h_1=(1,1,1)$ and $M_n=A^{({\bf w}_{n-2})}$ for $n\geq 2$.  
Then, $(X,T)$ is uniquely ergodic and 
there exists a real vector $\nu = (\nu (1) , \nu (2) , \nu (3))^T \in D$ with rationally independent entries such that 
$$E(X,T) = I (X,T) =  \nu (1) \mathbb{Z} + \nu (2) \mathbb{Z} + \mathbb{Z}.$$
\end{prop}

We notice that, by the choice of the incidence matrices, a Bratteli diagram as the one described in this proposition is always simple (this is just the fact that matrix $A^{(3)}$ is strictly positive) and it always admits a local order which makes it properly order. It is enough to consider the so called left-right order infinitely many times. 

\begin{proof}
Under our assumptions the incidence matrices of the Bratteli diagram are given by 
$M_n = A^{({\bf w}_{n-2})}$, $P_{n}= A^{({\bf w}_{[0,n-1)})}$ and 
$P_{m,n}=A^{({\bf w}_{[m-1,n-1)})}$ for all $1 \leq m < n$.

Let $\mu$ be an ergodic measure of $(X,T)$. For all $1\leq m < n$ we have that $\mu_1=P_m\mu_m$ and 
$\mu_m=P_{m,n}\mu_n$. Since  $A^{(w)}(\RR_+^3\setminus\{0\}) \subseteq D$ 
for any word on the alphabet $\{1,2,3\}$ having a $3$, we deduce that 
$\mu_n \in D$ for all $n\geq 2$ and thus $\mu_1 \in \bigcap_{n\geq 2} A^{({\bf w}_{[0,n-1)})} D$.

For $i\in \{ 1,2,3\}$ define the row vector 
$\eta_i = \mu_1 (i) h_1 -e_i$, where $e_i$ is the $i$-th canonical row vector of $\RR^3$. 
From $\mu_1 (1 ) + \mu_1 (2) + \mu_1 (3) = 1$ we get 
$\eta_i \cdot \mu_1 = 0$. Thus, by definition, for all $i \in \{1,2,3\}$ and $n\geq 2$  we have 
$$\Vert \frac{\eta_i}{\Vert \eta_i \Vert} A^{({\bf w}_{[0,n-1)})}  \Vert \leq \Vert A^{({\bf w}_{[0,n-1)})} \Vert_{\mu_1}  \leq \Vert A^{({\bf w}_{[0,n-1)})} \Vert_{D^{({\bf w}_{[0,n-1)})}}, $$
where in the last inequality we have used that $\mu_1 \in A^{({\bf w}_{[0,n-1)})} D$. 
Hence, for any $n\in [n_j+1, n_{j+1} + 1)$ and $s \in S_n(u,v)$ with $u\in V_n$ and $v\in V_{n+1}$ 
we have
\begin{align*}
\vvert \mu_1 (i)  \langle  s ,h_n  \rangle\vvert  
= &
\vvert \mu_1 (i)  \langle  s , h_1 P_n \rangle\vvert
=
\vvert \mu_1 (i)  \langle  s , h_1 A^{({\bf w}_{[0,n-1)})} \rangle\vvert
=
\vvert  \langle  s , \eta_i A^{({\bf w}_{[0,n-1)})} \rangle\vvert\\
\leq 
& \Vert \eta _i A^{({\bf w}_{[0,n-1)})}  \Vert \ \Vert s \Vert \leq  
\Vert A^{({\bf w}_{[0,n-1)})} \Vert_{D^{({\bf w}_{[0,n-1)})}} \ \Vert \eta_i \Vert \ \Vert s \Vert \\
\leq & \Vert A^{({\bf w}_{[0,n_j)})} \Vert_{D^{({\bf w}_{[0,n_j)})}} \  \Vert s \Vert,
\end{align*}
where in the last inequality we have used Lemma \ref{lemme:cocycle} and the fact that 
$\Vert \eta_i \Vert \leq 1$.
But the set of incidence matrices we are using is bounded, so 
$\max \{ \Vert s \Vert ;  s \in S_n(u,v) , u\in V_n, v\in V_{n+1} \} \leq L$, where $L$ is a universal constant. This inequality implies that the series 
$$
\displaystyle \sum_{n\geq 1} \max_{\small \begin{array}{l}s \in S_n(u_n,u_{n+1})\\u_n\in V_n, u_{n+1}\in V_{n+1}\end{array}} \vvert \mu_1(i)  \langle  s ,h_n  \rangle\vvert  
$$
is bounded by $L \cdot\sum_{j\geq 0}  (n_{j+1} - n_j )\| A^{({\bf w}[0,n_{j})])} \|_{D^{({\bf w}[0,n_{j})])}}$ and thus, by hypothesis, it converges. Then, by Corollary \ref{rmrk:cond_suf_sin_simple}, 
$\mu_1 (1)$, $\mu_1 (2)$ and $\mu_1 (3)$  are continuous eigenvalues of $(X,T)$. 

By hypothesis and since $\mu_1 \in \bigcap_{n\geq 2} A^{({\bf w}_{[0,n-1)})} D$ for all $n\geq 2$, from Lemma \ref{lemma:ratind} we conclude that $\mu_1 (1)$, $\mu_1 (2)$ and $\mu_1 (3)$
are rationally independent continuous eigenvalues of $(X,T)$. 
Consequently, $1$, $\mu_1 (1)$ and $\mu_1 (2)$ are rationally independent too. 
This shows that $\mu_1 (1) \mathbb{Z} + \mu_1 (2) \mathbb{Z} + \mathbb{Z} \subseteq E(X,T)$.

By Theorem 9 in \cite{rangofinito}, the number of ergodic measures of $(X,T)$ is bounded by above by 
$\displaystyle \min_{n\geq 1}|V_n|-\eta(X,T)+1=4-\eta(X,T)$, where  $\eta(X,T)$ is the maximal number of rationally independent additive continuous eigenvalues. But, since $1$, $\mu_1 (1)$ and $\mu_1 (2)$ are rationally independent, this bound is lower than 1. We conclude that $(X,T)$ is uniquely ergodic. 

Finally, by unique ergodicity and the fact that $\mu_n=P^{-1}_n\mu_1$ for all $n\geq 2$, we have that 
$I(X,T) \subseteq \mu_1 (1) \mathbb{Z} + \mu_1 (2) \mathbb{Z} + \mathbb{Z} \subseteq E(X,T)$. As $E(X,T)\subseteq I(X,T)$ we get that $E(X,T)=I(X,T)=\mu_1(1) \mathbb{Z} + \mu_1(2) \mathbb{Z} + \mathbb{Z}$ as desired. 
\end{proof}

Let us explain how to construct sequences satisfying previous proposition. 
Let $w$ be a word in $\{ 1,2,3\}$ having at least one occurrence of $3$.
From Proposition \ref{prop:brunprim}, $A^{(w)}$ is Pisot and Lemma \ref{lem:brunpisot2} implies that  
$\delta = ||A^{(w^n)}||_{D^{(w^n)}} < 1$ for some $n\geq 1$.
Let ${\bf w}\in \{ 1,2,3\}^\NN$ and $(n_j)_{j\geq 0}$ be an increasing sequence of positive integers with $n_0=1$ and  
\begin{enumerate}
\item
${\bf w}_{[n_j , n_j +n|w|)} = w^n$ and $n_{j+1}-n_j > n|w|$ for all $j\geq 0$;
\item
$\sum_{j\geq 0} \delta^j (n_{j+1} - n_j ) < +\infty$.
\end{enumerate} 
Then $\bf w$ satisfies hypothesis of Proposition \ref{prop:example} and any minimal Cantor system $(X,T)$ 
satisfying the conditions of this proposition has the maximal continuous eigenvalue group property. 
In addition, making some modifications in previous construction we can get that the set of sequences like 
$\bf w$ can be taken to have full measure for many shift invariant measures of $\{1,2,3\}^\NN$.

{\small
\bibliography{biblio}{}
\bibliographystyle{amsalpha}
}

\end{document}